\numberwithin{equation}{section}
\newtheorem{theorem}{Theorem}[section]
\newtheorem{corollary}[theorem]{Corollary}
\newtheorem{lemma}[theorem]{Lemma}
\newtheorem{prop}[theorem]{Proposition}
\theoremstyle{definition}
\newtheorem{remark}[theorem]{Remark}
\theoremstyle{definition}
\theoremstyle{definition}
\theoremstyle{definition}
\def\dashint{\operatorname%
{\,\,\text{\bf-}\kern-.98em\DOTSI\intop\ilimits@\!\!}}
\def\\det{\text{\det}}
\def\Xint#1{\mathchoice
 {\XXint\displaystyle\textstyle{#1}}%
 {\XXint\textstyle\scriptstyle{#1}}%
 {\XXint\scriptstyle\scriptscriptstyle{#1}}%
 {\XXint\scriptscriptstyle\scriptscriptstyle{#1}}%
 \!\int}
\def\XXint#1#2#3{{\setbox0=\hbox{$#1{#2#3}{\int}$}
  \vcenter{\hbox{$#2#3$}}\kern-.5\wd0}}
\def\dashint{\Xint-}
\def\.5{\frac{1}{2}}
\newcommand{\RN}[1]{%
  \textup{\uppercase\expandafter{\romannumeral#1}}%
}
\renewcommand{\epsilon}{\varepsilon}
\newcounter{marnote}
\begin{document}

\title[A Nash-Kuiper theorem in a high codimension]{A Nash-Kuiper theorem for isometric immersions in a high codimension}


\author[Z.W. Zhao]{Zhiwen Zhao}

\address[Z.W. Zhao]{School of Mathematics and Physics, University of Science and Technology Beijing, Beijing 100083, China.}
\email{zwzhao365@163.com}


\date{\today} 


\maketitle
\begin{abstract}
This paper is devoted to investigating the isometric immersion problem of Riemannian manifolds in a high codimension. It has recently been demonstrated that any short immersion from an $n$-dimensional smooth compact manifold into $2n$-dimensional Euclidean space can be uniformly approximated by $C^{1,\theta}$ isometric immersions with any $\theta\in(0,1/(n+2))$ in dimensions $n\geq3$. In this paper, we improve the H\"{o}lder regularity of the constructed isometric immersions in the local setting, achieving $C^{1,\theta}$ for all $\theta\in(0,1/n)$ in odd dimensions and all $\theta\in(0,1/(n+1))$ in even dimensions. Moreover, we also establish explicit $C^{1}$ estimates for the isometric immersions, which indicate that the larger the initial metric error is, the greater the $C^{1}$ norms of the resulting isometric maps become, meaning that their slope become steeper.

\end{abstract}

\maketitle



\section{Introduction and main results}

The classical isometric immersion problem seeks an immersion $u$ from an $n$-dimensional Riemannian manifold $(\mathcal{M}^{n},g)$ into $\mathbb{R}^{d}$ such that the length of every $C^{1}$ curve is preserved under the mapping. In the language of Riemannian geometry, this means that the pullback metric $u^{\sharp}e$  coincides with the intrinsic metric $g$ on the manifold, where $e$ is the Euclidean metric on $\mathbb{R}^{d}$. In local coordinates, this relation is expressed as a system of $n_{\ast}:=n(n+1)/2$ partial differential equations in $d$ unknows as follows:
\begin{align}\label{S01}
\partial_{i}u\cdot\partial_{j}u=g_{ij}.	
\end{align}	
When $0<\partial_{i}u\cdot\partial_{j}u<g$ in the sense of quadratic forms, $u$ is called a short immersion. If $u$ is further assumed to be injective, then the isometric and short immersions become, respectively, an isometric embedding and a short embedding.

The problem of determining whether a Riemannian manifold can be isometrically immersed (or embedded) into some Euclidean space was first raised by Schl\"{a}fli \cite{S1871}, who conjectured that the system of \eqref{S01} is locally solvable provided that the target Euclidean space has dimension at least $n_{\ast}$, equal to the number of equations in \eqref{S01}. If both the manifold and its metric are real analytic, Janet \cite{J1927} proved that any $(\mathcal{M}^{2},g)$ can be locally isometrically embedded into $\mathbb{R}^{3}$. Later, E. Cartan \cite{C1928} generalized this result to higher dimensions, establishing local analytic isometric embeddings of $(\mathcal{M}^{n},g)\hookrightarrow\mathbb{R}^{n_{\ast}}$. When the target dimension $d<n_{\ast}$, since the system in \eqref{S01} becomes overdetermined, it had experienced a long time suspicion about whether the isometric embeddings still exist until Nash \cite{N1954} made a groundbreaking discovery by proving the existence of $C^{1}$ isometric embeddings in the case of $d\geq n+2$, revealing that the solvability of \eqref{S01} is governed solely by topological constraints. Subsequently, Kuiper \cite{K1955} addressed the low-codimension case of $d=n+1$ by constructing the perturbations of corrugation type, which are different from the spiral-type perturbations introduced by Nash \cite{N1954}. In fact, the Nash--Kuiper theorem established in \cite{N1954,K1955} is not only an existence result, but also exhibits a vast set of $C^{1}$ solutions. Such a profusion of solutions epitomizes a fundamental feature of Gromov’s $h$-principle \cite{G1973,G1986}, for which the isometric embedding problem is a special example. It is worth noting that the iteration scheme created by Nash \cite{N1954} can be regarded as an early prototype of the convex integration method. The latter was systematized and extended by Gromov \cite{G1973,G1986}, becoming a powerful tool for solving various nonlinear geometric problems and constructing wild solutions to a variety of equations in fluid dynamics.

Unlike the abundance of $C^{1}$ isometric embeddings in Nash--Kuiper theorem (often termed as the flexibility of isometric embeddings), classical rigidity results imply that smooth isometric embeddings into low-codimension Euclidean spaces are generally unique. For instance, the rigidity theorem of the Weyl problem in \cite{C1930,H1943} showed that a $C^{2}$ isometric immersion of a closed positively curved sphere into $\mathbb{R}^{3}$ is unique up to rigid motions. Such a sharp dichotomy between the flexibility of $C^{1}$ isometric immersions and the rigidity in the $C^{2}$ setting naturally leads to the question: does there exist a critical Hölder exponent $\theta_{c}\in(0,1)$ such that

$(i)$ for $\theta<\theta_{c}$, the Nash--Kuiper flexibility still persists;

$(ii)$ for $\theta>\theta_{c}$, isometric immersions in $C^{1,\theta}$ are rigid?

The investigation of $C^{1,\theta}$ isometric immersions originates from the seminal work of Borisov in the 1950s. He proved in \cite{B19581959} that the Cohn-Vossen--Herglotz rigidity remains valid for $C^{1,\theta}$ immersions provided $\theta>2/3$. In \cite{B1965} Borisov claimed that the Nash--Kuiper flexibility extends to $C^{1,\theta}$ whenever $\theta$ is smaller than $n_{\ast}$, now called the \emph{Borisov exponent}. A proof in the case of $n=2$ and $\theta<1/7$ was given in \cite{B2004} under the assumption of an analytic metric. Conti, De Lellis and Sz\'{e}kelyhidi \cite{CDS2012} confirmed Borisov's announcements for general dimension and general metric. In the presence of two-dimensional disks immersed into $\mathbb{R}^{3}$, De Lellis, Inauen and Sz\'{e}kelyhidi \cite{DIS2018} raised the H\"{o}lder exponent from $1/7$ to $1/5$ in virtue of the theory of conformal maps. The corresponding result in the global setting can be seen in \cite{CS2022}. Recently, Cao, Hirsch and Inauen \cite{CHI2025} further improved Borisov's exponent to any $\theta<1/(1+n^{2}-n)$ in the codimension-one case. In particular, the H\"{o}lder exponent of $n=2$ obtained in \cite{CHI2025} corresponds precisely to the Onsager critical exponent $1/3$ appearing in the context of non-uniqueness of weak solutions to the Euler equations. A similar dichotomy phenomena conjectured by Onsager \cite{O1949} is that weak solutions of the Euler equations with Hölder regularity $\theta<1/3$ may dissipate energy, whereas the solutions with $\theta > 1/3$ conserve energy. While the rigidity result was proved in \cite{CWT1994}, the resolution of the flexibility case eventually came in \cite{I2018}, based on a series of developments \cite{DS2017,BDIS2015,DS2013,DS2014} that built on the breakthrough ideas of \cite{DS2009}. The subsequent work \cite{BDSV2019} further constructed dissipative Euler solutions below the Onsager regularity threshold. Notably, the non-uniqueness constructions in these work rely on highly oscillatory building blocks such as Mikado flows and Beltrami waves, reflecting the same philosophy as that behind the classical Nash-spirals and Kuiper-corrugations. In fact, the high oscillation of these building blocks captures the sharp variations characteristic of turbulent structures observed in the physical world, which is the essential mechanism underlying the construction of non-unique solutions. In contrast, highly oscillatory Nash-spirals and Kuiper-corrugations in differential geometry are used to enlarge the induced metric of a short immersion, thereby reducing the metric error. This effect can be easily visualized through the short embedding of a one-dimensional curve into $\mathbb{R}^2$ or $\mathbb{R}^3$.

It is worth mentioning that the precise value of the critical H\"{o}lder exponent $\theta_{c}$ in the isometric immersion problem still remains open. We now briefly summarize how the H\"{o}lder regularity exponent arises in the local Nash–Kuiper convex integration method. The metric error is iteratively eliminated stage by stage. Each stage consists of $N$ steps for some fixed $N\in\mathbb{N}$, with each step involving the constructions of high-frequency low-amplitude perturbations in the form of Nash spirals or Kuiper corrugations. The number $N$ of steps in a stage leads to the resulting H\"{o}lder regularity exponent $1/(1+2N)$.  See Section 3 in \cite{CDS2012} for more detailed explanations. Therefore, for $n=2$, the exponent $1/3$ obtained in \cite{CHI2025} represents the best result achievable under the current technical framework. However, the recent work \cite{CI2024,DI2020} provided compelling evidence suggesting that $1/2$ may be the critical threshold in the two-dimensional case.

When the codimension is sufficiently large, the situation becomes different. In this case, one can construct isometric immersions with higher regularity. This topic was pioneered by Nash \cite{N1956}, who showed that any short immersion admits a uniform approximation by $C^{\infty}$ isometric immersions provided that the target dimension $d\geq3 n_{\ast}+4n$. The corresponding results in H\"{o}lder spaces was given by Jacobowitz \cite{J1972}. The target dimension in \cite{N1956} was extended to $d\geq n_{\ast}+2n+3$ in \cite{GR1970,G1986}, and further enhanced by M. G\"{u}nther \cite{G1989,G1990} to the bound $d\geq n_{\ast}+\max\{2n,n+5\}$. Yet, the smallest admissible target dimension remains an open problem. With regard to the case of less regular metrics, see \cite{K1978}.

Motivated by the classical Whitney's strong embedding theorem \cite{W1944}, in this paper we study the isometric immersion problem from a bounded connected $n$-dimensional domain into $\mathbb{R}^{2n}$, with the primary goal of establishing a local version of the Nash–Kuiper theorem in which the H\"{o}lder regularity exponent for the constructed isometric maps is improved compared to recent results in \cite{CS2025}. For that purpose, a lot of techniques and tools, such as Nash's iterative scheme \cite{N1954}, Kuiper-corrugations \cite{K1955}, the multiple perturbations \cite{K1978} and the iterative integration by parts \cite{CHI2025}, are employed.  

For any given $0<\varepsilon< 1/(n+1)$, denote
\begin{align}\label{OPM01}
	\theta:=\theta(n,\varepsilon)=
	\begin{cases}
		1/n-\varepsilon,&\text{if $n$ is odd,}\\
		1/(n+1)-\varepsilon,&\text{if $n$ is even}.  
	\end{cases}		
\end{align}
The main results in this paper are stated as follows.	
\begin{theorem}\label{Main01}
For $n\geq3$, let $\Omega\subset\mathbb{R}^{n}$ be a smooth bounded and simply connected domain with a $C^{1}$ Riemannian metric $g$. For any fixed $0<\varepsilon\ll1/(n+1)$, if $\bar{u}\in  C^{1,\varepsilon}(\overline{\Omega},\mathbb{R}^{2n})$ is a short immersion, then there exists an isometric immersion $u\in C^{1,\theta}(\overline{\Omega},\mathbb{R}^{2n})$ such that 
\begin{align*}
\|u-\bar{u}\|_{C^{0}(\overline{\Omega})}<\varepsilon,\;\, \|u\|_{C^{1}(\overline{\Omega})}\leq\|\bar{u}\|_{C^{1}(\overline{\Omega})}+\mathcal{G}_{\ast}+\varepsilon,\;\,\|\nabla(u-\bar{u})\|_{C^{0}(\overline{\Omega})}\leq\mathcal{K}_{\ast}+\varepsilon,	
\end{align*}
where $\theta,\mathcal{G}_{\ast},\mathcal{K}_{\ast}$ are, respectively, defined by \eqref{OPM01}, \eqref{AQ68} and \eqref{AQ69}.

\end{theorem}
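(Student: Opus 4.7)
The plan is to construct $u$ as the limit of a sequence $(u_k)_{k\geq 0}$ of short immersions produced by convex integration organized into stages, starting from $u_0=\bar{u}$. Writing $h_k:=g-u_k^\sharp e$ for the metric defect, I would choose parameters $\delta_k\downarrow 0$ (defect size) and $\lambda_k\uparrow\infty$ (frequency) of double-exponential type linked by $\delta_k\approx\lambda_k^{-2\theta}$, and at each stage produce $u_{k+1}$ with $\|h_{k+1}\|_{C^0}\leq C\delta_{k+1}$, $\|u_{k+1}-u_k\|_{C^1}\leq C\sqrt{\delta_k}$ and $\|\nabla^2 u_{k+1}\|_{C^0}\leq C\lambda_{k+1}\sqrt{\delta_{k+1}}$. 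Telescoping the $C^1$ increments yields the $C^0$ approximation and the quantitative $C^1$ estimates in the theorem, while interpolating between the $C^1$ and $C^2$ bounds delivers $u\in C^{1,\theta}$ in the limit, with $\theta$ as in \eqref{OPM01}.

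\textbf{Structure of a single stage.} Inside a stage I would mollify $u_k$, $h_k$ and $g$ at a length scale $\ell_k$ slightly larger than $\lambda_k^{-1}$, so that the ensuing high-frequency perturbations interact only with smooth objects, then decompose the mollified defect through a partition of unity and the standard inner-product lemma into a finite sum of primitive metrics $a_\xi^2\,d\xi\otimes d\xi$. These are cancelled by a sequence of perturbation steps built from Nash spirals \cite{N1954} and Kuiper corrugations \cite{K1955}. The essential use of the codimension $2n-n=n$, following K\"all\'en's multiple-perturbation idea \cite{K1978}, is that each step can erect roughly $\lfloor n/2\rfloor$ spirals on mutually orthogonal pairs of extra coordinates, and in odd dimension a lone surplus direction additionally supports a Kuiper-type corrugation. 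Packing several primitive metrics into each step reduces the number $N$ of steps required per stage compared with the low-codimension case, and the parity of $n$ governs how many primitives can be grouped at once; this asymmetry is the source of the two regimes in \eqref{OPM01}, via the generic relation $\theta\sim 1/(1+2N)$ recalled in the introduction.

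\textbf{Sharper error estimates.} The improvement over the exponent $1/(n+2)$ from \cite{CS2025} comes from refining the pointwise bound on the new defect $h_{k+1}$. Each spiral/corrugation introduces an oscillatory quadratic remainder of schematic form $R_k(x)\cos(\lambda_k\xi\cdot x)+S_k(x)\sin(\lambda_k\xi\cdot x)$, whose direct $C^0$ estimate is too crude to close the iteration at the target exponent. Following the iterated integration-by-parts technique of \cite{CHI2025}, I would integrate against the high-frequency phase several times, gaining a factor $\lambda_k^{-1}$ per integration in exchange for one derivative landing on the mollified amplitudes; since those amplitudes were smoothed at scale $\ell_k$, each derivative costs only $\ell_k^{-1}$, and the balance between these opposing rates is precisely what pins down the relation $\delta_k\approx\lambda_k^{-2\theta}$ for $\theta$ as in \eqref{OPM01}.

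\textbf{Main obstacle.} The principal difficulty I expect is the bookkeeping required when composing K\"all\'en's multiple-perturbation scheme with the iterated integration by parts: each of the $N$ steps per stage inflates different $C^j$ semi-norms of the running amplitudes by different powers of $\lambda_k$, and the commutator terms generated by composition have to be re-estimated at every step, with constants tracked through the whole stage rather than through a single perturbation. Moreover, obtaining the explicit constants $\mathcal{G}_\ast$ and $\mathcal{K}_\ast$ --- as opposed to soft $\lesssim$-bounds --- demands that every implied constant be propagated through the telescoping sums used to control $\|u\|_{C^1}$ and $\|\nabla(u-\bar{u})\|_{C^0}$. This quantitative accounting, rather than any individual geometric or analytic ingredient, is expected to be the chief technical burden of the argument.
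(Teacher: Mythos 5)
Your high-level architecture is correct in spirit --- a Cauchy sequence of short immersions with double-exponential parameters, mollification at the start of each stage, a K\"all\'en-type decomposition into primitive metrics, Nash spirals and Kuiper corrugations for the cancellation, and the iterated integration by parts from \cite{CHI2025} to sharpen the error. But the heart of the matter, namely \emph{how the perturbations are packaged into steps} and \emph{how the parity of $n$ enters}, is organized in your sketch in a way that does not match the paper and would not produce the exponents $1/n$ (odd) and $1/(n+1)$ (even).

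You propose that each step erects $\lfloor n/2\rfloor$ spirals on orthogonal pairs of normals, with a lone corrugation tacked on in odd dimension, so that a step cancels $\tfrac{n+1}{2}$ primitives when $n$ is odd and $\tfrac{n}{2}$ when $n$ is even. First, this does not divide evenly: after the initial $n$ diagonal primitives are cancelled there remain $n_\ast - n = \tfrac{n(n-1)}{2}$ primitives, and $\tfrac{n+1}{2}$ does not divide $\tfrac{n(n-1)}{2}$ in general. Second, and more fundamentally, you have the parity asymmetry backwards. In the paper's scheme, the first $n$ perturbations are \emph{individual} Nash spirals, one per primitive, each with a tiny frequency increment $\Lambda^{1/J}$; the whole point of the iterated integration by parts is to push the oscillatory quadratic remainders $\gamma_{i,j}\mathcal{Q}_{i,j}$ from these spirals into a small error $\mathcal{R}_i\in\mathrm{span}\{\xi_k\otimes\xi_k: k>n\}$ (Lemma \ref{lem08} applied $J$ times, with $J$ chosen so that $\Lambda^{1/J}$ is harmless), which is then absorbed by adjusting the amplitudes $b_i$ in \eqref{AQ908}. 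These $n$ steps therefore contribute essentially nothing to the frequency growth. After that, the remaining $\tfrac{n(n-1)}{2}$ primitives are cancelled in groups of $n$ per step, each step using $n$ mutually orthogonal normal vectors to support $n$ Kuiper corrugations with the full frequency jump $\Lambda$. This works cleanly when $n$ is odd ($\tfrac{n-1}{2}$ such steps); when $n$ is even one needs an extra half-step of $\tfrac{n}{2}$ Nash spirals (using two normals each) before the $\tfrac{n-2}{2}$ corrugation steps, because $n-1$ is odd. The resulting count of ``expensive'' $\Lambda$-growth steps is $\tfrac{n-1}{2}$ (odd) versus $\tfrac{n}{2}$ (even), and it is precisely this count $N'$ --- not the total number of steps per stage --- that enters the heuristic $\theta\sim 1/(1+2N')$ to give $1/n$ and $1/(n+1)$. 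Your proposal never separates the cheap first-$n$ phase from the expensive corrugation phase, so it cannot explain why the naive count (which would suggest $\theta\sim 1/(3n)$) is so dramatically improved, nor why the answer differs by parity in the observed direction.

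A secondary issue: the explicit $C^1$ constants $\mathcal{G}_\ast$ and $\mathcal{K}_\ast$ in the statement are produced by a separate initial step (the construction of $u_0$ via finitely many Nash spirals \eqref{W16}--\eqref{AQ69}), not by propagating constants through the telescoping stage estimates, which only contribute the $\varepsilon$ slack. Your sketch attributes all of $\mathcal{G}_\ast$, $\mathcal{K}_\ast$ to bookkeeping through the stages, which is not where they come from.
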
	

\begin{remark}
In comparison with the recent results in \cite{CS2025}, when $n\geq3$, we improve the Hölder exponent of the isometric maps from $1/(n+2)-\varepsilon$ to $1/n-\varepsilon$ in odd dimensions, and to $1/(n+1)-\varepsilon$ in even dimensions. As for $n=2$, the same exponent $1/3-\varepsilon$ as in \cite{CS2025} can be recovered by directly applying the current argument in Theorem \ref{Main01} with minor modification, and thus the proof is omitted here.	
	
\end{remark}	

\begin{remark}
In addition to establishing the uniform approximation estimates of isometric maps, we also provide stronger $C^{1}$ estimates on the isometric immersions, which clearly reveal the monotonicity of their $C^{1}$ norms with respect to the initial metric error, namely, the $C^{1}$ norm increases as the initial deviation from the target metric increases. This behavior is in accordance with the geometric intuition that larger initial deficits require sharper corrections in the approximation maps.
	
\end{remark}

\begin{remark}
As seen from the proof of Theorem \ref{Main01}, although the assumption of $0<\varepsilon\ll1/(n+1)$ can be relaxed to that of $0<\varepsilon<2/n^{2}$, the essential role of $\varepsilon$ lies in its arbitrary smallness, which ensures that the isometric approximation holds at arbitrarily small scales. 

\end{remark}

A slight modification of the proof of Theorem \ref{Main01} yields the same conclusion under more general assumptions as follows.
\begin{corollary}
For $n\geq3$, assume that $\Omega\subset\mathbb{R}^{n}$ is a smooth bounded and simply connected domain with a $C^{1}$ Riemannian metric $g$. For any given three small parameters $0<\varepsilon\ll1/(n+1),0<\kappa,\varsigma\ll1$, if $\bar{u}:(\overline{\Omega},g)\rightarrow\mathbb{R}^{2n}$ is a short immersion of class $ C^{1,\kappa}$, then there exists a $C^{1,\theta}$ isometric immersion $u:(\overline{\Omega},g)\rightarrow\mathbb{R}^{2n}$ depending only on $\varepsilon,\kappa,\varsigma,\|g\|_{C^{1}},\|\bar{u}\|_{C^{1,\kappa}}$ such that $\|u-\bar{u}\|_{C^{0}(\overline{\Omega})}<\varsigma,$ where $\theta$ is given by \eqref{OPM01}.
\end{corollary}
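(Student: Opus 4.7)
The plan is to run the same Nash--Kuiper convex integration scheme as in the proof of Theorem \ref{Main01}, while decoupling the three roles played by the single parameter $\varepsilon$ in that theorem. In the statement of Theorem \ref{Main01}, $\varepsilon$ simultaneously quantifies (i) the initial H\"{o}lder regularity of $\bar{u}$, (ii) the $C^{0}$ approximation tolerance, and (iii) the loss from the optimal H\"{o}lder exponent in \eqref{OPM01}. In the corollary these roles are taken over by $\kappa$, $\varsigma$, and $\varepsilon$ respectively, so the proof amounts to verifying that the iteration closes when these parameters are tracked independently.

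First I would handle the regularity of the initial short immersion. If $\kappa \geq \varepsilon$, then $C^{1,\kappa}\subset C^{1,\varepsilon}$ and Theorem \ref{Main01} applies directly, with the $C^{0}$ tolerance subsequently shrunk to $\varsigma$ (see below). If $\kappa<\varepsilon$, then the mollification step at the start of each stage must be revisited, since the quantitative bound on the metric error between $\bar{u}$ and its mollification is powered by the initial H\"{o}lder regularity. Replacing $\varepsilon$ by $\kappa$ in these estimates modifies only the rates, with constants depending on $\kappa$, $\|g\|_{C^{1}}$, and $\|\bar{u}\|_{C^{1,\kappa}}$; the degraded rates are absorbed by choosing the leading frequency $\lambda_{0}$ of the Nash spirals and Kuiper corrugations slightly larger from the first stage onward.

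Next I would verify the $C^{0}$ approximation. In the proof of Theorem \ref{Main01}, the bound $\|u-\bar{u}\|_{C^{0}}<\varepsilon$ is obtained by summing the stepwise $C^{0}$ corrections over all stages, which form a convergent geometric series dominated by a negative power of $\lambda_{0}$. Since this total is independent of the target H\"{o}lder exponent, the tolerance $\varepsilon$ can be replaced by $\varsigma$ simply by enforcing a lower bound on $\lambda_{0}$ in terms of $\varsigma$, $\|g\|_{C^{1}}$, and $\|\bar{u}\|_{C^{1,\kappa}}$. The final H\"{o}lder exponent $\theta$ is determined by the number $N$ of perturbation steps per stage and by the arithmetic behind \eqref{OPM01}, so $\varepsilon$ continues to play there exactly the same role as in Theorem \ref{Main01}, independently of $\kappa$ and $\varsigma$.

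The main obstacle I expect is the regime $\kappa<\varepsilon$, where mollification rates become the bottleneck and the frequency parameters $\lambda_{q}$ governing each stage must be retuned so that the inductive metric error estimate still closes. Since all the core estimates are monotone in the H\"{o}lder exponent of the initial data, this retuning is quantitative rather than structural: one inflates $\lambda_{0}$ and the growth factor between consecutive stages by amounts depending only on $\kappa$ (not on $\varepsilon$ or $\varsigma$), and then re-checks that the $C^{0}$ summation above remains below $\varsigma$. With this done, the resulting $u\in C^{1,\theta}(\overline{\Omega},\mathbb{R}^{2n})$ depends only on the data $\varepsilon,\kappa,\varsigma,\|g\|_{C^{1}},\|\bar{u}\|_{C^{1,\kappa}}$, as claimed.
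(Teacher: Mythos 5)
Your proposal is correct and takes essentially the same approach the paper intends, which declares this corollary to follow "by a slight modification of the proof of Theorem \ref{Main01}." You correctly identify that the single parameter $\varepsilon$ plays three decoupled roles and that the only place the initial H\"{o}lder regularity of $\bar{u}$ enters is the construction of $u_{0}$ (via the mollification estimate involving $\|\bar{u}\|_{1+\varepsilon}\bar{\ell}^{\varepsilon}$, which becomes $\|\bar{u}\|_{1+\kappa}\bar{\ell}^{\kappa}$), all of which is absorbed by taking the parameter $a$ larger; one small imprecision is your remark that the growth factor between consecutive stages must also be retuned in terms of $\kappa$ — in fact only the initial step is affected, since all $u_{m}$ with $m\geq0$ are smooth and Proposition \ref{pro01} never sees $\kappa$ again.
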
	

A consequence of Theorem \ref{Main01} and Remark 8.1 in \cite{DIS2018} gives the following isometric version of Whitney's strong embedding theorem in the local setting.
\begin{corollary}
For $n\geq3$, let $\Omega\subset\mathbb{R}^{n}$ be a smooth bounded and simply connected domain endowed with a $C^{1}$ Riemannian metric $g$. Given any small constant $0<\varepsilon\ll1/(n+1)$, there exist infinitely many $C^{1,\theta}$ isometric embeddings of $(\overline{\Omega},g)$ into $\mathbb{R}^{2n}$, where $\theta$ is defined by \eqref{OPM01}.
	
\end{corollary}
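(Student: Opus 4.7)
\medskip

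\noindent\textbf{Proof proposal.}
The plan is to derive the corollary directly from Theorem \ref{Main01}, combined with a post-processing step that, using the high codimension of the target $\mathbb{R}^{2n}$, promotes isometric immersions into isometric embeddings in the spirit of Remark 8.1 of \cite{DIS2018}.

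First I would exhibit an explicit continuous family of short embeddings of $(\overline{\Omega},g)$ into $\mathbb{R}^{2n}$. Choosing $\lambda_{0}>0$ small enough that $\lambda_{0}^{2}\,\mathrm{Id}_{n\times n}<g$ as quadratic forms on $\overline{\Omega}$ (possible since $g\in C^{1}(\overline{\Omega})$ and $\overline{\Omega}$ is compact), for each $\lambda\in(0,\lambda_{0})$ I set
\begin{align*}
\bar{u}_{\lambda}(x):=(\lambda x,0)\in\mathbb{R}^{n}\times\mathbb{R}^{n}=\mathbb{R}^{2n}.
\end{align*}
Each $\bar{u}_{\lambda}$ is a smooth short embedding (hence lies in $C^{1,\varepsilon}$), and for $\lambda\neq\lambda'$ the maps $\bar{u}_{\lambda},\bar{u}_{\lambda'}$ are $C^{0}$-separated by an amount comparable to $|\lambda-\lambda'|$.

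Next, for each $\lambda$ I would apply Theorem \ref{Main01} to $\bar{u}=\bar{u}_{\lambda}$ with a small approximation tolerance $\delta_{\lambda}>0$, obtaining an isometric immersion $u_{\lambda}\in C^{1,\theta}(\overline{\Omega},\mathbb{R}^{2n})$ with $\|u_{\lambda}-\bar{u}_{\lambda}\|_{C^{0}(\overline{\Omega})}<\delta_{\lambda}$. The step I expect to be the main obstacle is the promotion of the immersion $u_{\lambda}$ to a genuine embedding $\tilde{u}_{\lambda}$. Here the key input is Remark 8.1 of \cite{DIS2018}: since the codimension $n$ of $\bar{u}_{\lambda}(\overline{\Omega})\subset\mathbb{R}^{2n}$ equals $\dim\Omega$, which is precisely the Whitney threshold for generic embedding of an $n$-manifold, one can superimpose an additional, arbitrarily small $C^{1,\theta}$ perturbation in the $n$ normal directions to $\bar{u}_{\lambda}$, implemented by one further convex-integration step of the same type as in the main iteration, which preserves the pullback metric $u^{\sharp}e=g$ exactly while removing all double points via a standard transversality argument. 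This yields an isometric embedding $\tilde{u}_{\lambda}\in C^{1,\theta}(\overline{\Omega},\mathbb{R}^{2n})$.

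Finally, the infinitude of $\{\tilde{u}_{\lambda}\}_{\lambda\in(0,\lambda_{0})}$ will follow by choosing each $\delta_{\lambda}$ much smaller than $\lambda_{0}$, for then
\begin{align*}
\|\tilde{u}_{\lambda}-\tilde{u}_{\lambda'}\|_{C^{0}(\overline{\Omega})}\geq\|\bar{u}_{\lambda}-\bar{u}_{\lambda'}\|_{C^{0}(\overline{\Omega})}-\delta_{\lambda}-\delta_{\lambda'}>0
\end{align*}
whenever $\lambda\neq\lambda'$, so the family is in fact uncountable. The delicate technical point throughout, and the reason the construction of \cite{DIS2018} cannot simply be quoted but must be adapted to the present setting, is to arrange the final perturbation so that it simultaneously preserves the exact isometry condition, stays within the H\"older class $C^{1,\theta}$ prescribed by Theorem \ref{Main01}, and is small enough to guarantee injectivity; these three requirements are what ultimately dictate the size of $\delta_{\lambda}$.
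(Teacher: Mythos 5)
The overall architecture of your proposal (exhibit a family of short embeddings, approximate each isometrically, separate them in $C^{0}$) is sound and matches what the paper must implicitly rely on, since the paper's proof is just the one line ``A consequence of Theorem \ref{Main01} and Remark 8.1 in \cite{DIS2018}.'' However, the crucial step — promoting the isometric immersions to isometric embeddings — is where your argument breaks down, and it breaks down in a way that is not just a presentation gap.

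You propose to post-process the isometric immersion $u_{\lambda}$ by adding an ``arbitrarily small $C^{1,\theta}$ perturbation in the $n$ normal directions \dots which preserves the pullback metric $u^{\sharp}e=g$ exactly while removing all double points via a standard transversality argument.'' Neither half of this claim is available. First, once $u^{\sharp}e=g$ holds exactly, the set of $C^{1}$ maps with the same pullback metric is not a manifold in which you can wiggle freely; there is no known family of isometry-preserving $C^{1,\theta}$ perturbations that is rich enough to invoke a genericity or parametric transversality argument. A further ``convex-integration step of the same type as in the main iteration'' would, by design, change the induced metric — that is precisely what such steps do — so it cannot preserve $u^{\sharp}e=g$. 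Second, transversality/general position only removes double points for generic maps of an $n$-manifold into $\mathbb{R}^{2n+1}$; for the critical codimension $n$ into $\mathbb{R}^{2n}$, which is exactly our situation, Whitney's strong embedding theorem requires the Whitney trick, which is an ambient isotopy and by no means a small normal perturbation, let alone a metric-preserving one. Moreover the Sard/transversality machinery needs more regularity than $C^{1,\theta}$.

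The mechanism that Remark 8.1 of \cite{DIS2018} actually supplies is of a different kind: embeddedness is not restored after the fact but \emph{preserved throughout the iteration}. The key point is quantitative: each perturbation added in the scheme is in the normal bundle of the current immersion, with $C^{0}$ amplitude $O(\delta_{m+1}^{1/2}/\lambda_{m,i})$, while the $C^{1}$ increment is $O(\delta_{m+1}^{1/2})$ and hence summable. If the initial short map is an embedding with positive injectivity modulus and uniformly non-degenerate first fundamental form, and if the parameter $a$ is taken large enough that the cumulative $C^{0}$ drift (and the loss of immersion non-degeneracy) stays below a fixed fraction of that modulus, then every $u_{m}$ — and hence the $C^{1}$ limit $u$ — remains an embedding. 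This uses exactly the $C^{0}$ and $C^{1}$ estimates already established in Proposition \ref{pro01} and in the construction of $u_{0}$; no new perturbation is introduced, and in particular no isometry has to be ``preserved under a perturbation.'' Your family $\bar{u}_{\lambda}(x)=(\lambda x,0)$ is a perfectly good supply of short embeddings to feed into this argument.

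One smaller point: your final ``so the family is in fact uncountable'' does not follow from choosing $\delta_{\lambda}$ merely ``much smaller than $\lambda_{0}$.'' For the separation inequality to hold for \emph{all} pairs $\lambda\neq\lambda'$ in an uncountable set you would need $\delta_{\lambda}<\tfrac{1}{2}\inf_{\lambda'\neq\lambda}\|\bar{u}_{\lambda}-\bar{u}_{\lambda'}\|_{C^{0}}$, and that infimum is $0$ for any uncountable set of parameters. The statement only asks for ``infinitely many,'' which is obtained by taking a discrete sequence $\lambda_{k}\to\lambda_{0}$ and $\delta_{\lambda_{k}}$ decaying fast enough relative to $|\lambda_{k}-\lambda_{j}|$, $j<k$. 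This yields countably infinitely many pairwise distinct isometric embeddings, which suffices.
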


The remainder of the paper is organized as follows. In Section \ref{SEC02}, we introduce some notations and gather several technical lemmas for later use. Section \ref{SEC03} is dedicated to the proof of Theorem \ref{Main01}, which is achieved by reducing its proof to the construction of a Cauchy sequence of smooth short immersions satisfying some iterative estimates with superexponential decay as stated in Proposition \ref{pro01}. In Section 4, we examine the limitations encountered when attempting to further raise the regularity exponent in even dimensions. Then a relevant matrix decomposition is presented in Proposition \ref{pro003} for potential future applications.

\section{Preliminaries}\label{SEC02}

\subsection{Notations and auxiliary estimates}
For $A\in\mathbb{R}^{n\times n}$, denote by $A^{t}$ the transpose of $A$. Write $\mathrm{sym}(A)=\frac{1}{2}(A+A^{t})$ and $\mathrm{Sym}_{n}=\{A\in\mathbb{R}^{n\times n}:A=A^{t}\}$.  Let $O(1)$ be some quantity satisfying that $|O(1)|\leq\,C$. Here and in the following, unless otherwise stated, the constant $C$ represents a universal positive constant depending only on $n,\|g\|_{1},\|\bar{u}\|_{1+\varepsilon},\|g-\bar{u}^{\sharp}e\|_{0}$ and the number of derivatives, whose value may vary from line to line.

For a real-valued, vector-valued, or general tensor-valued function $f$, denote $\|f\|_{0}:=\sup\limits_{x\in\Omega}|f|$ and define the H\"{o}lder seminorms as follows: 
\begin{align*}
	[f]_{k}=\sum_{|\beta|=k}\|\nabla^{\beta}f\|_{0},\quad[f]_{k+\theta}=\sum_{|\beta|=k}\sup_{x\neq y,x,y\in\Omega}\frac{|\nabla^{\beta}f(x)-\nabla^{\beta}f(y)|}{|x-y|^{\theta}},	
\end{align*}	
where $k\in\mathbb{N}$, $\theta\in(0,1)$, $\beta$ is a multi-index. Then the H\"{o}lder norms are listed as follows:
\begin{align*}
	\|f\|_{k}=\sum^{k}_{j=0}[f]_{j},\quad\|f\|_{k+\theta}=\|f\|_{k}+[f]_{k+\theta}.	
\end{align*}	 
We now state the following required mollification estimates.
\begin{lemma}\label{LEM08}
	For $k\in\mathbb{N}$ and $0<\theta\leq1$, we have
	\begin{align}\label{A26}
		\|f\ast\varphi_{\ell}\|_{k}\leq\|f\|_{k},\quad[f-f\ast\varphi_{\ell}]_{k}\leq\ell^{\theta}[f]_{k+\theta},
	\end{align}
	and
	\begin{align}\label{A27}
		\begin{cases}
			\|f-f\ast\varphi_{\ell}\|_{r}\leq C\ell^{2-r}[f]_{2},&0\leq r\leq2,\\
			\|(fg)\ast\varphi_{\ell}-(f\ast\varphi_{\ell})(g\ast\varphi_{\ell})\|_{r}\leq C\ell^{2\theta-r}\|f\|_{\theta}\|g\|_{\theta},&r\geq0,
		\end{cases}	
	\end{align}	
	where $f,g$ are two	real-valued, vector-valued, or general tensor-valued functions,  $\varphi_{\ell}$ is the standard mollifying kernel on $\mathbb{R}^{n}$, and the constant $C$ depends only on $r,\theta,\varphi$.
\end{lemma}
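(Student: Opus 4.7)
The plan is to derive each of the four estimates from elementary properties of the standard mollifier—namely $\int\varphi_\ell = 1$, $\mathrm{supp}\,\varphi_\ell \subset B_\ell(0)$, and the scaling $\|\nabla^\alpha\varphi_\ell\|_{L^1} \leq C\ell^{-|\alpha|}$. The bound $\|f*\varphi_\ell\|_k \leq \|f\|_k$ in \eqref{A26} is immediate: convolution commutes with differentiation, and Young's inequality together with $\|\varphi_\ell\|_{L^1}=1$ passes every seminorm through. For the companion bound $[f - f*\varphi_\ell]_k \leq \ell^\theta[f]_{k+\theta}$, I would use $\int\varphi_\ell = 1$ to represent, for any multi-index $|\beta|=k$,
\[
\nabla^\beta(f - f*\varphi_\ell)(x) = \int\bigl[\nabla^\beta f(x) - \nabla^\beta f(x-y)\bigr]\varphi_\ell(y)\,dy,
\]
and then dominate the bracket on $\mathrm{supp}\,\varphi_\ell\subset B_\ell$ by $[\nabla^\beta f]_\theta\ell^\theta$ via the H\"older seminorm, summing over $\beta$.

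For the first inequality in \eqref{A27}, the case $r=0$ relies on the vanishing first moment of the symmetric mollifier: a second-order Taylor expansion of $f(x-y)$ about $x$ kills the linear contribution and leaves a quadratic remainder bounded by $C\ell^2[f]_2$. The endpoint $r=2$ follows from $\nabla^2(f-f*\varphi_\ell) = \nabla^2 f - (\nabla^2 f)*\varphi_\ell$ together with the first bound in \eqref{A26}, giving control by $2[f]_2$. The intermediate values $r\in(0,2)$ are then obtained by the standard H\"older interpolation $\|h\|_r \leq C\|h\|_0^{1-r/2}\|h\|_2^{r/2}$ applied to $h = f - f*\varphi_\ell$.

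For the commutator estimate in \eqref{A27}, the starting point is the algebraic identity
\[
\bigl((fg)*\varphi_\ell - (f*\varphi_\ell)(g*\varphi_\ell)\bigr)(x) = \tfrac12\iint\bigl(f(y)-f(z)\bigr)\bigl(g(y)-g(z)\bigr)\varphi_\ell(x-y)\varphi_\ell(x-z)\,dy\,dz,
\]
verified by expanding the right-hand side and using Fubini. Restricting the integration to $y,z\in B_\ell(x)$ gives $|y-z|\leq 2\ell$, so the integrand kernel $K(y,z):=\tfrac12(f(y)-f(z))(g(y)-g(z))$ is bounded pointwise by $C\ell^{2\theta}\|f\|_\theta\|g\|_\theta$. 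Applying $\nabla_x^r$ transfers all derivatives onto the smooth factors $\varphi_\ell(x-y)$ and $\varphi_\ell(x-z)$; Leibniz together with $\|\nabla^\alpha\varphi_\ell\|_{L^1}\leq C\ell^{-|\alpha|}$ then produces the advertised scaling $\ell^{2\theta-r}$.

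The main technical point in the plan is identifying the symmetric integral representation in the last step: a naive attempt to differentiate the convolution directly would transfer derivatives onto $f$ and $g$, which are assumed to be merely $C^\theta$ with $\theta<1$, and therefore cannot absorb arbitrarily many derivatives. Moving all derivatives onto the smooth kernels while preserving the two-fold cancellation $(f(y)-f(z))(g(y)-g(z))$ is precisely what allows the gain $\ell^{2\theta}$ to offset the loss $\ell^{-r}$ produced by differentiating the mollifier.
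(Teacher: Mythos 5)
Your proof is correct and follows the same standard route as the sources the paper cites for this lemma (Lemma~1 of Conti--De~Lellis--Sz\'ekelyhidi and Lemma~3.3 of De~Lellis--Inauen--Sz\'ekelyhidi); the paper itself gives no proof beyond these citations, and the symmetric double-convolution identity you use for the commutator bound together with the endpoint-plus-interpolation scheme for $\|f-f\ast\varphi_\ell\|_r$ are precisely the arguments appearing there. One minor point worth making explicit: your differentiation argument for the commutator handles integer $r$ directly, and for non-integer $r$ one should interpolate between consecutive integers exactly as you already do for the first estimate in \eqref{A27}.
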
	
\begin{proof}
	\eqref{A26} is an immediate consequence of Bochner's integral inequality and Young's convolution inequality. The proof of \eqref{A27} can be seen in Lemma 1 in \cite{CDS2012} and Lemma 3.3 in \cite{DIS2018}.
	
\end{proof}

\subsection{Decomposition of symmetric matrices}

Denote by $\{e_{i}\}_{i=1}^{n}$ the standard basis of $\mathbb{R}^{n}$. Write
\begin{align}\label{XI}
\{\xi_{i}:i=1,2,...,n_{\ast}\}=\bigg\{\frac{e_{i}+e_{j}}{|e_{i}+e_{j}|}:1\leq i\leq j\leq n\bigg\},\quad	\xi_{i}=e_{i},\;\,i=1,2,...,n.
\end{align}	
Then $\{\xi_{i}\otimes\xi_{i}\}_{i=1}^{n_{\ast}}$ forms a basis of $\mathrm{Sym}_{n}$. Define
\begin{align}\label{M01}
h_{\ast}=\sum^{n_{\ast}}_{i=1}\xi_{i}\otimes\xi_{i}.	
\end{align}	
A direct computation shows that $h_{\ast}$ is a positive definite matrix. Recalling Lemma 3 in \cite{CDS2012} and Lemma 2.1 in \cite{CHI2025}, we have
\begin{lemma}\label{lem001}
There exist a small constant $\sigma_{\ast}=\sigma_{\ast}(n)>0$ and linear maps $L_{i}:\mathrm{Sym}_{n}\rightarrow\mathbb{R}$ such that
\begin{align*}
h=\sum^{n_{\ast}}_{i=1}L_{i}(h)\xi_{i}\otimes\xi_{i},\quad\text{for any $h\in\mathrm{Sym_{n}}$,}	
\end{align*}	
and, furthermore, $L_{i}(h)\geq\sigma_{\ast}$ if $|h-h_{\ast}|\leq2\sigma_{\ast}$.

\end{lemma}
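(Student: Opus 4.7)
The plan is to split the statement into two essentially independent pieces: a linear-algebra assertion that $\{\xi_{i}\otimes\xi_{i}\}_{i=1}^{n_{\ast}}$ is a basis of $\mathrm{Sym}_{n}$, from which the existence of the linear functionals $L_{i}$ and the identity $h=\sum_{i}L_{i}(h)\,\xi_{i}\otimes\xi_{i}$ both follow automatically; and a continuity estimate exploiting $L_{i}(h_{\ast})=1$ to produce the uniform positive lower bound near $h_{\ast}$.

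For the basis step, I would exploit the numerology $\dim\mathrm{Sym}_{n}=n_{\ast}$, so that it suffices to prove that the $n_{\ast}$ rank-one symmetric matrices $\xi_{i}\otimes\xi_{i}$ span $\mathrm{Sym}_{n}$. The subfamily with $\xi_{i}=e_{i}$ gives the $n$ diagonal matrices $e_{i}\otimes e_{i}$, which obviously span the diagonal subspace. For the remaining indices, corresponding to pairs $i<j$, the identity
\[
\tfrac{1}{2}(e_{i}+e_{j})\otimes(e_{i}+e_{j})=\tfrac{1}{2}\bigl(e_{i}\otimes e_{i}+e_{j}\otimes e_{j}\bigr)+\mathrm{sym}(e_{i}\otimes e_{j})
\]
shows that, after subtracting the already-available diagonal pieces, one recovers the off-diagonal element $\mathrm{sym}(e_{i}\otimes e_{j})$ as a linear combination of elements of our family. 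Running over all $i<j$ exhausts a standard basis of the off-diagonal part of $\mathrm{Sym}_{n}$, establishing that $\{\xi_{i}\otimes\xi_{i}\}$ spans, hence is a basis. The maps $L_{i}$ are then defined as the corresponding dual basis, which by construction are linear, and the expansion $h=\sum_{i=1}^{n_{\ast}}L_{i}(h)\xi_{i}\otimes\xi_{i}$ holds for every $h\in\mathrm{Sym}_{n}$.

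For the lower bound, note that the definition of $h_{\ast}$ in \eqref{M01} yields $L_{i}(h_{\ast})=1$ for every $i$. Since each $L_{i}$ is a linear functional on the finite-dimensional normed space $(\mathrm{Sym}_{n},|\cdot|)$, there is a constant $M=\max_{i}\|L_{i}\|<\infty$ with $|L_{i}(h)-L_{i}(h_{\ast})|\leq M|h-h_{\ast}|$ for all $h$. I would then simply choose $\sigma_{\ast}\in(0,1)$ small enough that $1-2M\sigma_{\ast}\geq\sigma_{\ast}$, for instance $\sigma_{\ast}:=1/(1+2M)$; the hypothesis $|h-h_{\ast}|\leq 2\sigma_{\ast}$ then forces $L_{i}(h)\geq 1-2M\sigma_{\ast}\geq\sigma_{\ast}$, completing the proof.

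I do not anticipate any serious obstacle: the whole statement is a finite-dimensional exercise and the constant $\sigma_{\ast}$ depends only on the dimension $n$ through the norms of the coefficient maps $L_{i}$. The only place requiring a moment of attention is the spanning argument, where one must verify that the normalized symmetric vectors $(e_{i}+e_{j})/\sqrt{2}$ contribute enough to cover every off-diagonal symmetric direction, and this reduces to the elementary identity displayed above.
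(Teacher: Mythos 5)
Your proposal is correct and takes essentially the standard approach used in the literature the paper cites (Lemma 3 of \cite{CDS2012} and Lemma 2.1 of \cite{CHI2025}): identify $\{\xi_{i}\otimes\xi_{i}\}$ as a basis of $\mathrm{Sym}_{n}$ by checking spanning against the dimension count, take $L_{i}$ to be the dual basis, observe $L_{i}(h_{\ast})=1$ directly from \eqref{M01}, and conclude by Lipschitz continuity of the linear functionals. No gaps; the choice $\sigma_{\ast}=1/(1+2M)$ with $M=\max_{i}\|L_{i}\|$ works and depends only on $n$.
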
	

Applying the proof of Lemma 2.2 in \cite{CHI2025} with a slight modification, we obtain the following K\"{a}ll\'{e}n-type decomposition.
\begin{lemma}\label{lem002}
Assume that $n\geq2$, $1\leq\mu_{0}\leq\mu_{1}\leq\cdots\leq\mu_{n}$. Then there exists a small constant $0<\sigma_{0}<\sqrt{\sigma_{\ast}}$ such that if $h\in C^{\infty}(\overline{\Omega},\mathrm{Sym}_{n})$ satisfies
\begin{align*}
\|h-h_{\ast}\|_{0}+\frac{\mu_{0}}{\mu_{1}}\leq2\sigma_{0},\quad\|h\|_{i}\leq\mu_{0}^{i},\quad i\geq1,
\end{align*}		
then for any $ j\geq 0$, there exist $a^{j}=(a_{1}^{j},...,a_{n_{\ast}}^{j})\in C^{\infty}(\overline{\Omega},\mathbb{R}^{n_{\ast}})$ and $\mathcal{E}^{j}\in C^{\infty}(\overline{\Omega},\mathrm{Sym}_{n})$ such that
\begin{align}\label{QE01}
h=\sum^{n_{\ast}}_{i=1}(a_{i}^{j})^{2}\xi_{i}\otimes\xi_{i}+\sum^{n}_{i=1}\mu_{i}^{-2}\nabla a_{i}^{j}\otimes\nabla a_{i}^{j}+\mathcal{E}^{j},	
\end{align}	
and 
\begin{align*}
\begin{cases}
a_{i}^{j}\geq\sqrt{\sigma_{\ast}},&i=1,...,n_{\ast},\\
\|a^{j}\|_{k}\leq C(n)\mu_{0}^{k},&k\geq0,\\
\|\mathcal{E}^{j}\|_{k}\leq C(n)(\mu_{0}\mu_{1}^{-1})^{2(j+1)}\mu_{0}^{k},&k\geq0,
\end{cases}		
\end{align*}	
where $\{\xi_{i}\}_{i=1}^{n_{\ast}}$ and $\sigma_{\ast}$ are, respectively, given by \eqref{XI} and Lemma \ref{lem001}.
\end{lemma}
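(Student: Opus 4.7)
The plan is to prove the decomposition by induction on $j$, adapting the argument of Lemma 2.2 in \cite{CHI2025} to the multi-scale frequency setting $\mu_1 \leq \cdots \leq \mu_n$. For the base case $j=0$, I would apply Lemma \ref{lem001} to $h$: since $\|h - h_*\|_0 \leq 2\sigma_0$ is, by choosing $\sigma_0$ appropriately small, at most $2\sigma_*$, this yields coefficients $L_i(h) \geq \sigma_*$, and setting $a_i^0 := \sqrt{L_i(h)}$ gives the exact identity $h = \sum_{i=1}^{n_*} (a_i^0)^2 \xi_i \otimes \xi_i$. Defining
\begin{align*}
\mathcal{E}^0 := -\sum_{i=1}^n \mu_i^{-2}\, \nabla a_i^0 \otimes \nabla a_i^0
\end{align*}
then produces \eqref{QE01} at $j=0$; the chain rule together with $L_i(h) \geq \sigma_*$ yields $\|a^0\|_k \leq C \mu_0^k$, and the factor $\mu_i^{-2} \leq \mu_1^{-2}$ delivers $\|\mathcal{E}^0\|_k \leq C (\mu_0/\mu_1)^{2} \mu_0^k$, which is the bound required at $j=0$.

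For the inductive step, given $(a^j, \mathcal{E}^j)$ satisfying the conclusion at stage $j$, I would apply Lemma \ref{lem001} to the current error and write $\mathcal{E}^j = \sum_{i=1}^{n_*} L_i(\mathcal{E}^j) \xi_i \otimes \xi_i$, then upgrade the amplitudes by setting $(a_i^{j+1})^2 := (a_i^j)^2 + L_i(\mathcal{E}^j)$ for $i=1,\ldots,n_*$. A direct algebraic manipulation then recasts the identity as \eqref{QE01} at stage $j+1$, with the new error
\begin{align*}
\mathcal{E}^{j+1} := \sum_{i=1}^n \mu_i^{-2} \bigl( \nabla a_i^j \otimes \nabla a_i^j - \nabla a_i^{j+1} \otimes \nabla a_i^{j+1} \bigr).
\end{align*}
Writing $a_i^{j+1} - a_i^j$ as the smooth composition of the small quantity $L_i(\mathcal{E}^j)/(a_i^j)^2$ with $t \mapsto \sqrt{1+t} - 1$ and invoking standard Fa\`a di Bruno and Leibniz estimates for H\"older norms would produce $\|a^{j+1} - a^j\|_k \leq C (\mu_0/\mu_1)^{2(j+1)} \mu_0^k$. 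Factoring the tensor difference $\nabla a_i^{j+1} \otimes \nabla a_i^{j+1} - \nabla a_i^j \otimes \nabla a_i^j$ as a sum of Leibniz-type terms and using $\mu_i^{-2} \leq \mu_1^{-2}$ together with $\|a^j\|_{k+1}, \|a^{j+1}\|_{k+1} \leq C\mu_0^{k+1}$ then yields $\|\mathcal{E}^{j+1}\|_k \leq C (\mu_0/\mu_1)^{2(j+2)} \mu_0^k$, closing the induction.

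The principal obstacle I expect is preserving the lower bound $a_i^j \geq \sqrt{\sigma_*}$ throughout the iteration, since the scalar $L_i(\mathcal{E}^j)$ may carry either sign and the successive corrections could in principle degrade $(a_i^j)^2$. The remedy is to choose $\sigma_0$ sufficiently small, beyond merely $\sigma_0 < \sqrt{\sigma_*}$, so that the geometric tail $\sum_{j \geq 0} \|\mathcal{E}^j\|_0 \leq C \sigma_0^2$ stays a negligible fraction of $\sigma_*$; combined with a mild strengthening of the initial threshold in Lemma \ref{lem001} (e.g., taking $|h-h_*| \leq \sigma_*$ in place of $2\sigma_*$, at the cost of halving $\sigma_0$), this ensures $(a_i^j)^2 \geq \sigma_*$ at every stage. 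A secondary point is keeping the constant $C(n)$ uniform in $j$, which is afforded by the super-geometric smallness of $(\mu_0/\mu_1)^{2(j+1)}$ absorbing the stage-dependent combinatorial factors produced by Fa\`a di Bruno; this is precisely where the hypothesis $\mu_0/\mu_1 \leq 2\sigma_0 \ll 1$ is used beyond the base case.
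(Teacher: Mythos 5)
Your proof follows essentially the same inductive scheme as the paper (which refers to Lemma~2.2 of \cite{CHI2025}; the method is reproduced in the paper's own Proposition~\ref{pro003}): decompose with Lemma~\ref{lem001}, introduce $\mathcal{E}^{0}=-\sum_{i}\mu_{i}^{-2}\nabla a_{i}^{0}\otimes\nabla a_{i}^{0}$, then iteratively reabsorb $\mathcal{E}^{j}$ into the coefficients via $L_{i}(\mathcal{E}^{j})$ and refresh the gradient error. That is the right structure and the estimates close as you say.

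One refinement: your ``principal obstacle'' paragraph is more complicated than it needs to be. Setting $f^{j}:=\sum_{i=1}^{n}\mu_{i}^{-2}\nabla a_{i}^{j}\otimes\nabla a_{i}^{j}$, your update $(a_{i}^{j+1})^{2}=(a_{i}^{j})^{2}+L_{i}(\mathcal{E}^{j})$ is algebraically nothing but $a_{i}^{j+1}=\sqrt{L_{i}(h-f^{j})}$. Since $\|h-f^{j}-h_{\ast}\|_{0}\leq\|h-h_{\ast}\|_{0}+\|f^{j}\|_{0}\leq 2\sigma_{0}+C(\mu_{0}/\mu_{1})^{2}\leq 2\sigma_{\ast}$ for $\sigma_{0}$ small, Lemma~\ref{lem001} applies directly to $h-f^{j}$ and gives $L_{i}(h-f^{j})\geq\sigma_{\ast}$ at every stage $j$, hence $a_{i}^{j+1}\geq\sqrt{\sigma_{\ast}}$ with no need to track a geometric tail or to strengthen the threshold in Lemma~\ref{lem001}. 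This is exactly what the paper does in Step~2 of Proposition~\ref{pro003}, and it makes the lower bound automatic rather than a delicate cancellation argument. Also note that, as in Proposition~\ref{pro003}, the constant in the $\mathcal{E}^{j}$-estimate is in general $j$-dependent, which is harmless here since the lemma is invoked at the fixed index $j=J$.
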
	

\begin{remark}
From Lemma \ref{lem002}, we see that the first term in \eqref{QE01} is the leading part of $h$, while the remaining two terms can be regarded as small lower-order perturbations.  	
	
\end{remark}	

The following lemma is adapted from Lemma 2.3 of \cite{CHI2025} with slight adjustments, which plays a crucial role in the proof of Lemma \ref{lem08} below.
\begin{lemma}\label{lem06}
Set $n\geq2$. For any given $c_{\ast}\in\mathbb{R}\setminus\{0\}$ and every $i=1,2,...,n$, the linear map 
\begin{align*}
\Phi_{i}:\mathbb{R}^{n}\times\mathbb{R}^{n_{\ast}-n}\longrightarrow&\mathrm{Sym}_{n},\\	
(\alpha,\beta)\longmapsto &c_{\ast}\alpha\odot\xi_{i}+\sum^{n_{\ast}}_{j=n+1}\beta_{j-n}\xi_{j}\otimes\xi_{j}
\end{align*}		
is an isomorphism, where $\alpha\odot\xi_{i}:=\frac{1}{2}(\alpha\otimes \xi_{i}+\xi_{i}\otimes\alpha),$ and $\{\xi_{i}\}_{i=1}^{n_{\ast}}$ is defined by \eqref{XI}.
	
\end{lemma}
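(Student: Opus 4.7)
The plan is to prove injectivity of $\Phi_i$ and then invoke equality of dimensions. Indeed, the domain $\mathbb{R}^n \times \mathbb{R}^{n_*-n}$ and the codomain $\mathrm{Sym}_n$ both have dimension $n_* = n(n+1)/2$, so it suffices to show that $\Phi_i(\alpha,\beta) = 0$ forces $(\alpha,\beta) = 0$. For notational convenience I would re-index the components of $\beta$ by unordered pairs $\{p,q\}$ with $1 \leq p < q \leq n$, using the correspondence between such pairs and the vectors $\xi_j = (e_p + e_q)/\sqrt{2}$ for $j = n+1,\ldots,n_*$.

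First I would write down the entries of $\Phi_i(\alpha,\beta)$ in coordinates. Since $\xi_i = e_i$, the term $c_* \alpha \odot e_i$ has $(k,l)$-entry $\tfrac{c_*}{2}(\alpha_k \delta_{il} + \alpha_l \delta_{ik})$, so it vanishes on all $(k,l)$ with $k, l \neq i$, and on the diagonal it is zero except at $(i,i)$ where it equals $c_* \alpha_i$. Meanwhile each basis symmetric matrix $\xi_j \otimes \xi_j$ associated with a pair $\{p,q\}$ contributes $\tfrac{1}{2}$ at the four positions $(p,p), (q,q), (p,q), (q,p)$ and zero elsewhere.

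The argument then unfolds by reading entries of $\Phi_i(\alpha,\beta) = 0$ in a deliberate order. First, for indices $k < l$ with both $k, l \neq i$, the $(k,l)$ entry reduces to $\tfrac{1}{2}\beta_{\{k,l\}} = 0$, so $\beta_{\{k,l\}} = 0$ for every pair not containing $i$. Next, for each $l \neq i$, the diagonal entry $(l,l)$ gives $\tfrac{1}{2}\sum_{m \neq l} \beta_{\{l,m\}} = 0$; by the previous step the only surviving term is $\beta_{\{i,l\}}$, so $\beta_{\{i,l\}} = 0$. Hence $\beta = 0$ entirely. With $\beta$ eliminated, the off-diagonal entry $(i,l)$ for $l \neq i$ collapses to $\tfrac{c_*}{2}\alpha_l = 0$, and since $c_* \neq 0$ this gives $\alpha_l = 0$ for $l \neq i$; finally the $(i,i)$ entry yields $c_* \alpha_i = 0$, so $\alpha_i = 0$ as well.

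There is no substantive obstacle here: the only thing to manage is bookkeeping. The key structural observation is that the pairs $\{p,q\}$ split naturally into those containing $i$ and those not, and the entries of $\Phi_i(\alpha,\beta)$ decouple along this split -- off-diagonal entries outside row/column $i$ isolate the "pairs not containing $i$" block of $\beta$, the diagonal entries outside $(i,i)$ then isolate the "pairs containing $i$" block, and finally the row/column $i$ entries cleanly recover $\alpha$ thanks to $c_* \neq 0$.
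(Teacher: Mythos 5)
Your proof is correct, and the high-level skeleton (rank-nullity plus injectivity) is the same as the paper's, but the way you organize the injectivity check is genuinely cleaner. The paper reduces by symmetry to $i=1$, then applies $\Phi_1(\alpha,\beta)$ to each standard basis vector $\xi_j=e_j$ and re-expands the resulting vector in the $\{\xi_1,\ldots,\xi_n\}$ basis; because the $\xi_l$ with $l>n$ are not orthogonal, this forces the paper to introduce the auxiliary permuted coefficients $\gamma_k^{(j)}$ and keep track of which $\xi_l$ have nonzero dot product with $\xi_j$, and then to observe that $\gamma_{j_2}^{(j_1)}=\gamma_{j_1}^{(j_2)}$ to finish the first stage. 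You instead read the entries of $\Phi_i(\alpha,\beta)$ directly in the standard coordinate basis $\{e_p\odot e_q\}$ of $\mathrm{Sym}_n$, re-indexing $\beta$ by unordered pairs $\{p,q\}$. This makes the decoupling by rows/columns explicit and avoids all the auxiliary notation: off-diagonal entries outside row and column $i$ isolate the $\beta_{\{k,l\}}$ with $i\notin\{k,l\}$, the diagonal entries $(l,l)$ with $l\neq i$ then give the $\beta_{\{i,l\}}$, and row $i$ together with the $(i,i)$ entry recover $\alpha$ once $\beta=0$ and $c_*\neq 0$. Your entry-by-entry computations (the $(k,l)$ entry of $c_*\alpha\odot e_i$ is $\tfrac{c_*}{2}(\alpha_k\delta_{il}+\alpha_l\delta_{ik})$, and $\xi_{\{p,q\}}\otimes\xi_{\{p,q\}}$ contributes $\tfrac12$ at $(p,p)$, $(q,q)$, $(p,q)$, $(q,p)$) are all accurate, so the argument goes through. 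The only thing the paper's reduction to $i=1$ buys is slightly lighter notation, which your re-indexing by pairs already achieves.
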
	
\begin{remark}
The primary objective of the decomposition in Lemma \ref{lem06} is to isolate and explicitly represent the contribution of a symmetric matrix along any chosen direction, while parameterizing the remaining degrees of freedom in the complementary subspace.

\end{remark}
Building on the argument in Lemma 2.3 of \cite{CHI2025}, we provide a revised version of their proof.
\begin{proof}
By symmetry and the rank-nullity theorem, it suffices to show that $\Phi_{1}$ is injective. Assume that $(\alpha,\beta)=(\alpha_{1},...,\alpha_{n},\beta_{1},...,\beta_{n_{\ast}-n})\in\mathbb{R}^{n}\times\mathbb{R}^{n_{\ast}-n}$ satisfies 
\begin{align}\label{QE90}
	\Phi_{1}(\alpha,\beta)=c_{\ast}\alpha\odot\xi_{1}+\sum^{n_{\ast}}_{j=n+1}\beta_{j-n}\xi_{j}\otimes\xi_{j}=0.
\end{align}		
For $j=2,...,n$, by projecting both sides onto $\xi_{j}$, we have from \eqref{XI} that
\begin{align}\label{Z96}
0=&\frac{c_{\ast}}{2}\alpha_{j}\xi_{1}+\sum_{\xi_{l}\cdot\xi_{j}\neq0}\beta_{l-n}\xi_{l}=\frac{c_{\ast}}{2}\alpha_{j}\xi_{1}+\sum_{k=1,k\neq j}^{n}\frac{\gamma_{k}^{(j)}}{\sqrt{2}}(\xi_{k}+\xi_{j})\notag\\
=&\bigg(\frac{c_{\ast}}{2}\alpha_{j}+\frac{\gamma_{1}^{(j)}}{\sqrt{2}}\bigg)\xi_{1}+\sum_{k=2,k\neq j}^{n}\frac{\gamma_{k}^{(j)}}{\sqrt{2}}\xi_{k}+\bigg(\sum_{k=1,k\neq j}^{n}\frac{\gamma_{k}^{(j)}}{\sqrt{2}}\bigg)\xi_{j},
\end{align}	
where 
\begin{align*}
\big\{\gamma_{k}^{(j)}\big\}_{k=1,k\neq j}^{n}=\{\beta_{l-n}:\xi_{l}\cdot\xi_{j}\neq0,n+1\leq l\leq n_{\ast}\}.
\end{align*}
Remark that the set $\big\{\gamma_{k}^{(j)}\big\}_{k=1,k\neq j}^{n}$ is actually obtained by permuting the elements of $\{\beta_{l-n}:\xi_{l}\cdot\xi_{j}\neq0,n+1\leq l\leq n_{\ast}\}$ so that \eqref{Z96} holds. Using the linear independence of $\{\xi_{i}\}_{i=1}^{n}$, we obtain that for $j=2,...,n,$
\begin{align*}
\frac{c_{\ast}}{2}\alpha_{j}+\frac{\gamma_{1}^{(j)}}{\sqrt{2}}=\sum_{l=1,l\neq j}^{n}\gamma_{l}^{(j)}=0,\quad\gamma_{k}^{(j)}=0,\;k\geq2,\,k\neq j,	
\end{align*}	
which leads to that
\begin{align}\label{E1}
	\alpha_{j}=\gamma_{k}^{(j)}=0,\quad k\geq1,\,k\neq j,\,j=2,...,n.
\end{align}	
Note that for any $2\leq j_{1}<j_{2}\leq n$, we have $\gamma_{j_{2}}^{(j_{1})}=\gamma_{j_{1}}^{(j_{2})}$, which implies that the sets $\big\{\gamma_{k}^{(j_{1})}\big\}_{k=1,k\neq j_{1}}^{n}$ and $\big\{\gamma_{k}^{(j_{2})}\big\}_{k=1,k\neq j_{2}}^{n}$ share only one common element. This, together with \eqref{E1}, gives that
\begin{align}\label{Z98}
\beta=0,\quad\alpha_{j}=0,\;\,j=2,...,n.	
\end{align}	 

It remains to prove that $\alpha_{1}=0$. Similarly, applying the dot product with $\xi_{1}$ on both sides of \eqref{QE90}, it follows from \eqref{QE90} that
\begin{align*}
0=&\frac{c_{\ast}}{2}(\alpha+\alpha_{1}\xi_{1})+\sum_{\xi_{l}\cdot\xi_{1}\neq0}\beta_{l-n}\xi_{l}=\frac{c_{\ast}}{2}(\alpha+\alpha_{1}\xi_{1})+\sum^{n}_{k=2}\frac{\gamma_{k}^{(1)}}{\sqrt{2}}(e_{k}+e_{1})\notag\\
=&\bigg(c_{\ast}\alpha_{1}+\sum^{n}_{k=2}\frac{\gamma_{k}^{(1)}}{\sqrt{2}}\bigg)\xi_{1}+\sum^{n}_{k=2}\bigg(\frac{c_{\ast}}{2}\alpha_{k}+\frac{\gamma_{k}^{(1)}}{\sqrt{2}}\bigg)\xi_{k},
\end{align*}		
where
\begin{align*}
	\big\{\gamma_{k}^{(1)}\big\}_{k=2}^{n}=\{\beta_{l-n}:\xi_{l}\cdot\xi_{1}\neq0,n+1\leq l\leq n_{\ast}\}.
\end{align*}	
It then folllows from the linear independence of $\{\xi_{i}\}_{i=1}^{n}$ again that
\begin{align}\label{Z100}
c_{\ast}\alpha_{1}+\sum^{n}_{l=2}\frac{\gamma_{l}^{(1)}}{\sqrt{2}}=0,\quad	\frac{c_{\ast}}{2}\alpha_{k}+\frac{\gamma_{k}^{(1)}}{\sqrt{2}}=0,\;\,k=2,...,n.
\end{align}		
Then substituting \eqref{Z98} into \eqref{Z100}, we have $\alpha=0$ and $\beta=0.$ 	The proof is complete.

\end{proof}	

Based on Lemma \ref{lem06} and applying the proof of Proposition 2.4 in \cite{CHI2025} with minor modification, we have
\begin{lemma}\label{lem08}
Assume that $\lambda\geq\mu\geq1$, $K\geq0$, $\gamma\in C^{\infty}(\mathbb{S}^{1})$ is a smooth periodic function with zero mean, and $\mathcal{Q}\in C^{\infty}(\overline{\Omega},\mathrm{Sym}_{n})$ satisfies 
\begin{align*}
\|\mathcal{Q}\|_{k}\leq K\mu^{k},\quad k\geq 0.	
\end{align*}	
Then for any fixed $j\geq 0$, there exist $w^{j}\in C^{\infty}(\overline{\Omega},\mathbb{R}^{n})$, $\mathcal{G}^{j}\in C^{\infty}(\overline{\Omega},\mathrm{Sym}_{n})$, $\mathcal{R}^{j}\in C^{\infty}(\overline{\Omega},\mathrm{Sym}_{n})$, and $\gamma^{j}\in C^{\infty}(\mathbb{S}^{1})$ with $\dashint\gamma^{j}=0$ such that for $i=1,...,n$,
\begin{align}\label{W980}
\gamma(\lambda x\cdot\xi_{i})\mathcal{Q}=2\mathrm{sym}(\nabla w^{j})+\gamma^{j}(\lambda x\cdot\xi_{i})(\mu\lambda^{-1})^{j}\mathcal{G}^{j}+\mathcal{R}^{j},	
\end{align}		
where $\{\xi_{i}\}_{i=1}^{n_{\ast}}$ is given by \eqref{XI}, $\mathcal{R}^{j}\in\mathrm{span}\{\xi_{i}\otimes\xi_{i}:i=n+1,...,n_{\ast}\}$, and
\begin{align*}
\|w^{j}\|_{k}\leq CK\lambda^{k-1},\quad\|\mathcal{G}^{j}\|_{0}\leq CK\mu^{k},\quad\|\mathcal{R}^{j}\|_{k}\leq CK\lambda^{k},\quad k\geq0,	
\end{align*}		
for some $C=C(n,\gamma)>0$. 
\end{lemma}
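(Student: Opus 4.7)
The plan is to argue iteratively using Lemma \ref{lem06} together with integration by parts in the direction $\xi_i$, in the style of \cite{CHI2025}.

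First I would build the basic one-step identity. Let $\gamma^{(0)}=\gamma$ and let $\gamma^{(m)}$ denote the mean-zero primitive of $\gamma^{(m-1)}$ on $\mathbb{S}^1$, so that $\gamma^{(m-1)}(\lambda x\cdot\xi_i)=\lambda^{-1}\partial_{\xi_i}\bigl[\gamma^{(m)}(\lambda x\cdot\xi_i)\bigr]$. For any smooth $\alpha\in C^\infty(\overline{\Omega},\mathbb{R}^n)$, direct computation with $w_k=\tfrac{1}{2\lambda}\gamma^{(m)}(\lambda x\cdot\xi_i)\alpha_k$ gives
\begin{equation*}
\gamma^{(m-1)}(\lambda x\cdot\xi_i)\,\alpha\odot\xi_i \;=\; 2\,\mathrm{sym}(\nabla w) \;-\; \frac{1}{\lambda}\,\gamma^{(m)}(\lambda x\cdot\xi_i)\,\mathrm{sym}(\nabla\alpha).
\end{equation*}
This is the elementary ``integration by parts trade'': one loses a factor $\lambda^{-1}$ and gains a derivative on $\alpha$, but the residual symmetric matrix $\mathrm{sym}(\nabla\alpha)$ is again arbitrary.

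Next I would iterate. Apply Lemma \ref{lem06} with $c_\ast=1$ to decompose $\mathcal{Q}=\alpha^{(0)}\odot\xi_i+R^{(0)}$ with $R^{(0)}\in\mathrm{span}\{\xi_l\otimes\xi_l:l=n+1,\dots,n_\ast\}$ and $\|\alpha^{(0)}\|_k,\|R^{(0)}\|_k\lesssim K\mu^k$. The $R^{(0)}$ piece multiplied by $\gamma(\lambda x\cdot\xi_i)$ remains in the prescribed span and is absorbed into $\mathcal{R}^j$. For the $\odot\xi_i$ piece apply the one-step identity above to peel off a $2\,\mathrm{sym}(\nabla w_1)$, leaving an error $-\lambda^{-1}\gamma^{(1)}\,\mathrm{sym}(\nabla\alpha^{(0)})$. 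Decompose $\mathrm{sym}(\nabla\alpha^{(0)})=\alpha^{(1)}\odot\xi_i+R^{(1)}$ via Lemma \ref{lem06}; the $R^{(1)}$ piece goes into $\mathcal{R}^j$ (with prefactor $\lambda^{-1}\gamma^{(1)}$), and repeat on the $\alpha^{(1)}\odot\xi_i$ piece. After $j$ rounds, set $w^j:=\sum_{m=1}^{j}(-1)^{m-1}w_m$, collect the residual-span terms into $\mathcal{R}^j$, and isolate the remaining term $(-1)^{j}\lambda^{-j}\gamma^{(j)}(\lambda x\cdot\xi_i)\,\mathrm{sym}(\nabla\alpha^{(j-1)})$. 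Setting $\gamma^j:=(-1)^{j}\gamma^{(j)}$ (still mean zero) and $\mathcal{G}^j:=\mu^{-j}\mathrm{sym}(\nabla\alpha^{(j-1)})$ produces exactly \eqref{W980}.

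Finally, the estimates follow by induction: the linearity of the decomposition in Lemma \ref{lem06} gives $\|\alpha^{(m)}\|_k\lesssim K\mu^{k+m}$ and $\|R^{(m)}\|_k\lesssim K\mu^{k+m}$ for $m\ge 0$ (with the convention $\alpha^{(-1)}\sim\mathcal{Q}$). Distributing $k$ derivatives between $\gamma^{(m)}(\lambda x\cdot\xi_i)$ and $\alpha^{(m-1)}$ in the worst-case way yields $\|w_m\|_k\lesssim K\mu^{m-1}\lambda^{k-m}\le K\lambda^{k-1}$ since $\mu\le\lambda$; summing the geometric series over $m=1,\dots,j$ gives $\|w^j\|_k\le CK\lambda^{k-1}$. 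The same counting gives $\|\lambda^{-m}\gamma^{(m)}R^{(m)}\|_k\le K\mu^m\lambda^{k-m}\le K\lambda^k$, hence $\|\mathcal{R}^j\|_k\le CK\lambda^k$; and the definition of $\mathcal{G}^j$ gives $\|\mathcal{G}^j\|_k\le CK\mu^k$ after extracting the factor $(\mu\lambda^{-1})^j$.

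I expect the only real difficulty to be bookkeeping: tracking signs, verifying that Lemma \ref{lem06} can be applied with the same $\xi_i$ throughout so that all $R^{(m)}$ land in the fixed span $\mathrm{span}\{\xi_l\otimes\xi_l:l=n+1,\dots,n_\ast\}$, and confirming that the constant $c_\ast$ in Lemma \ref{lem06} can be normalised to $1$ at every stage (which it can, by absorbing it into $\alpha^{(m)}$). The geometric-series nature of the iteration — each step costs $\lambda^{-1}$ and yields $\mu$ — produces the decisive gain $(\mu\lambda^{-1})^j$ in the residual.
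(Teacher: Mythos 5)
Your proposal is correct and is essentially the iterative integration-by-parts argument of Proposition 2.4 in \cite{CHI2025}, which is precisely the route the paper invokes (the paper gives no independent proof, only a pointer to \cite{CHI2025} "with slight adjustments"). The one-step identity
\begin{align*}
\gamma^{(m-1)}(\lambda x\cdot\xi_i)\,\alpha\odot\xi_i = 2\,\mathrm{sym}(\nabla w) - \lambda^{-1}\gamma^{(m)}(\lambda x\cdot\xi_i)\,\mathrm{sym}(\nabla\alpha),\quad w=\tfrac{1}{2\lambda}\gamma^{(m)}(\lambda x\cdot\xi_i)\alpha,
\end{align*}
is exactly right, and alternating it with the $\Phi_i$-decomposition of Lemma \ref{lem06} (kept with the same $\xi_i$ throughout, so all $R^{(m)}$ stay in $\mathrm{span}\{\xi_l\otimes\xi_l:l>n\}$) produces \eqref{W980} with the stated objects; the estimates follow from $\|\alpha^{(m)}\|_k,\|R^{(m)}\|_k\lesssim K\mu^{k+m}$ and a Leibniz count. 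Two small bookkeeping remarks: the bound for $w^j$ is $\sum_{m=1}^j K\mu^{m-1}\lambda^{k-m}=K\lambda^{k-1}\sum_{m=1}^j(\mu/\lambda)^{m-1}$, which is a geometric sum only when $\mu<\lambda$; if $\mu$ is comparable to $\lambda$ the sum is $O(j)$, so strictly speaking the constant also depends on $j$ (as it implicitly does in the paper's statement, where $j$ is a fixed parameter). Also, the displayed bound $\|\mathcal{G}^j\|_0\leq CK\mu^k$ in the lemma is a typo for $\|\mathcal{G}^j\|_k\leq CK\mu^k$, which is what your $\mathcal{G}^j=\mu^{-j}\mathrm{sym}(\nabla\alpha^{(j-1)})$ delivers. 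Neither point affects the validity of the argument.
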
	
\begin{remark}
When $\lambda>\mu$ and $j\gg1$, the decomposition in \eqref{W980} shows that up to a small error, a high-frequency oscillatory symmetric matrix field can be split into the linear symmetric gradient of some perturbation $w^{j}$ plus a nonlinear structural complement $\mathcal{R}^{j}$ with respect to $w^{j}$. When $0<K\ll1$, these two terms become controllable perturbations. The linear symmetric gradient term can be directly incorporated into the constructed high-frequency low-amplitude perturbations, while the nonlinear structural complement will be handled as a perturbation in the complementary subspace. This outlines how the iterative integration by parts technique is performed. 

\end{remark}

We end this section by recalling the existence of normal vectors for the given embedding in \cite{CI2024}.
\begin{lemma}\label{lem012}
Assume that $\Omega\subset\mathbb{R}^{n}$ is a simply connected domain. For $n,d,N\in\mathbb{N}$ and $n<d$, let $u\in C^{N+1}(\overline{\Omega},\mathbb{R}^{d})$	be an embedding such that $\gamma^{-1}\mathrm{Id}\leq\nabla u^{t}\nabla u\leq\gamma\mathrm{Id}$ for some $\gamma>1$. Then there exists a family of normal vectors $\{\eta_{i}\}_{i=1}^{d-n}\subset C^{N}(\overline{\Omega},\mathbb{R}^{d})$ such that for $i,j=1,...,d-n$ and $0\leq k\leq N$,
\begin{align*}
\eta_{i}^{t}\eta_{j}=\delta_{ij},\quad \nabla u^{t}\eta_{i}=0,\quad[\eta_{i}]_{k}\leq C(k,\gamma)(1+[u]_{k+1}),	
\end{align*}	
where $\delta_{ij}$ is the Kronecker symbol: $\delta_{ij}=0$ for $i\neq j$, $\delta_{ij}=1$ for $i=j$.
\end{lemma}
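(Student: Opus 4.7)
The plan is to build the normal frame pointwise by orthogonally projecting a suitable collection of fixed vectors in $\mathbb{R}^d$ onto the normal space and then performing Gram--Schmidt orthonormalization, using the simple connectivity of $\Omega$ only to guarantee a consistent global choice.

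First I would exploit the assumption $\gamma^{-1}\mathrm{Id}\leq M:=\nabla u^{t}\nabla u\leq\gamma\mathrm{Id}$ to invert $M$ pointwise with $\|M^{-1}\|_{0}\leq\gamma$; iterating the identity $\partial(M^{-1})=-M^{-1}(\partial M)M^{-1}$ together with the Leibniz rule gives $[M^{-1}]_{k}\leq C(k,\gamma)(1+[u]_{k+1})$ for $0\leq k\leq N$. Define the smooth normal projection
\begin{equation*}
P_{\perp}(x):=I_{d}-\nabla u(x)\,M(x)^{-1}\,\nabla u(x)^{t},
\end{equation*}
which has rank $d-n$ everywhere and inherits the analogous bound $[P_{\perp}]_{k}\leq C(k,\gamma)(1+[u]_{k+1})$.

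Next I would select $d-n$ constant vectors $v_{1},\dots,v_{d-n}\in\mathbb{R}^{d}$ so that $\{P_{\perp}(x)v_{i}\}$ remain linearly independent on $\overline{\Omega}$. A single global choice generally fails, so I would cover $\overline{\Omega}$ by finitely many open sets $U_{\alpha}$ on each of which a suitable tuple $\{v_{i}^{\alpha}\}$ can be selected by a dimension count, and appeal to simple connectivity (which implies triviality of the rank-$(d-n)$ normal bundle) to glue the local frames into a global smooth frame via a subordinate partition of unity. Applying pointwise Gram--Schmidt to the resulting vector fields produces the orthonormal $\{\eta_{i}\}_{i=1}^{d-n}$ with $\eta_{i}^{t}\eta_{j}=\delta_{ij}$ and $\nabla u^{t}\eta_{i}=0$ by construction. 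The estimate $[\eta_{i}]_{k}\leq C(k,\gamma)(1+[u]_{k+1})$ then follows because Gram--Schmidt is a smooth map on the open set of tuples whose Gram determinant is uniformly bounded below (by a constant depending only on $\gamma$), and higher derivatives pick up at most one derivative of $\nabla u$ per step through the chain rule.

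The main obstacle I expect is the global selection step: the projections $P_{\perp}(x)v_{i}$ must remain simultaneously linearly independent across all of $\overline{\Omega}$, and for generic choices of $v_{i}$ this fails on codimension-one loci. A cleaner alternative that sidesteps both the partition-of-unity bookkeeping and any explicit topological argument is to fix an orthonormal normal frame at a base point $x_{0}\in\overline{\Omega}$ and parallel-transport it along paths using the normal connection $\nabla^{\perp}\eta:=P_{\perp}\,d\eta$; simple connectivity of $\Omega$ is exactly what guarantees path-independence of the transport and thus a well-defined global smooth frame, while the transport ODE yields the required bounds on $[\eta_{i}]_{k}$ by standard Gronwall-type estimates with coefficients controlled in terms of $[u]_{k+1}$.
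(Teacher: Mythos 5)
The paper does not actually prove this lemma: it is imported verbatim from \cite{CI2024} (``We end this section by recalling the existence of normal vectors for the given embedding in \cite{CI2024}''), so there is no in-paper argument to compare against and I can only assess your proposal on its own terms.

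Your first approach (project constant vectors onto $\ker(\nabla u^{t})$ via $P_{\perp}=I_{d}-\nabla u\,M^{-1}\nabla u^{t}$ and Gram--Schmidt) is the right starting point, and the quantitative bounds on $M^{-1}$ and $P_{\perp}$ are fine. But the gluing step is too vague to count as a proof: a partition of unity applied to local orthonormal frames does not yield an orthonormal frame, and in general a convex combination of pointwise bases of a subspace need not remain a basis. What you really need is a global smooth frame of the normal bundle, i.e.\ its triviality; Gram--Schmidt with the $P_{\perp}$ bounds then finishes. The assertion that ``simple connectivity implies triviality of the rank-$(d-n)$ normal bundle'' is not automatic: simple connectivity kills the first obstruction, but higher obstructions in $H^{2},H^{3},\dots$ can a priori survive. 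What actually makes the bundle trivial in the paper's setting ($d=2n$) is that it is stably trivial, since $T\Omega\oplus N\cong\underline{\mathbb{R}^{d}}$ with $T\Omega$ trivial, and has rank $n$ over an open $n$-manifold that deformation retracts onto a complex of dimension $\leq n-1$; stably trivial bundles of rank exceeding the base dimension are trivial. You should invoke something like this explicitly rather than wave at simple connectivity.

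The alternative you call ``cleaner'' is in fact incorrect. Parallel transport in the normal connection $\nabla^{\perp}=P_{\perp}\circ d$ is path-independent over a simply connected base if and only if the connection is flat, i.e.\ $R^{\perp}\equiv0$. For a generic embedding the normal curvature does not vanish (it is governed by the second fundamental form through the Ricci equation), so parallel transport around even a small contractible loop has nontrivial holonomy of size comparable to $|R^{\perp}|\cdot(\text{enclosed area})$. Simple connectivity only trivializes the $\pi_{1}$-part of the holonomy; it does nothing to the restricted holonomy generated by curvature. Hence this route does not produce a well-defined global frame and cannot replace the topological argument above.
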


\section{The proof of Theorem \ref{Main01}}\label{SEC03}
For $m\geq0$, denote
\begin{align}\label{W009}
	\Omega_{m}=\{x\in\mathbb{R}^{n}:\mathrm{dist}(x,\Omega)<2^{-m-1}\}.	
\end{align}		
By the classical extension arguments (see e.g. \cite{W1934}), we assume without loss of generality that  $\bar{u}$ is a short $ C^{1,\varepsilon}$ immersion from a slightly larger region $\overline{\Omega}_{0}$ into $\mathbb{R}^{2n}$ and $g$ is a $C^{1}$ Riemannian metric on $\overline{\Omega}_{0}$, which compensates for reduction of the region caused by the following mollification step.

For $n\geq3$ and any fixed $0<\varepsilon\ll1/(n+1)$, introduce the following constants:
\begin{align}\label{W90}
	b=1+\frac{\varepsilon (n-1)}{2},\quad \vartheta=b(\theta+\varepsilon)(1-(n-1)\varepsilon),
\end{align}	
where $\theta$ is defined by \eqref{OPM01}. For $m\geq0$, $a\gg1$, define
\begin{align}\label{M06}
\delta_{m}=\delta_{\ast}a^{-2\vartheta b^{m}},\quad\lambda_{m}=\lambda_{\ast}a^{b^{m+1}+\tau},\quad\tau=2\vartheta b(N_{\ast}+1)+\vartheta-b,
\end{align}
where $\delta_{\ast},N_{\ast},\lambda_{\ast}$ are, respectively, defined by \eqref{W08}--\eqref{W16} and \eqref{W09} below. Denote
\begin{align*}
g_{m}=g-\delta_{m+1}h_{\ast},\quad m\geq0,
\end{align*}
where the matrix $h_{\ast}$ is defined by \eqref{M01}. Remark that $g_{m}$ is defined so as to preserve the shortness of the following approximating sequence at each step. In order to obtain an isometric approximation within any prescribed error tolerance in Theorem \ref{Main01}, the idea is to construct a Cauchy sequence of short immersions with the step-by-step reduction of the metric error. To be specific, we need to establish the iterative proposition as follows.
\begin{prop}\label{pro01}
For $m\geq0$, let $u_{m}\in C^{\infty}(\overline{\Omega}_{m},\mathbb{R}^{2n})$ be a short immersion satisfying that
\begin{align*}
\begin{cases}		
\|g_{m}-u^{\sharp}_{m}e\|_{0}\leq\sigma_{0}\delta_{m+1},\quad\|\nabla^{2} u_{m}\|_{0}\leq\delta_{m}^{1/2}\lambda_{m},\\
\|u_{m}\|_{1}\leq\|\bar{u}\|_{1}+\mathcal{G}_{\ast}+\big(\sum^{m}_{i=0}2^{-i}-1\big)\varepsilon.
\end{cases}	
\end{align*}	
Then there exists a short immersion $u_{m+1}\in C^{\infty}(\overline{\Omega}_{m+1},\mathbb{R}^{2n})$ such that
\begin{align*}
\|u_{m+1}\|_{1}\leq\|\bar{u}\|_{1}+\mathcal{G}_{\ast}+\bigg(\sum^{m+1}_{i=0}\frac{1}{2^{i}}-1\bigg)\varepsilon,	
\end{align*}
and	
\begin{align*}
\begin{cases}
\|g_{m+1}-u^{\sharp}_{m+1}e\|_{0}\leq\sigma_{0}\delta_{m+2},\quad\|\nabla^{2} u_{m+1}\|_{0}\leq\delta_{m+1}^{1/2}\lambda_{m+1},	\\
\|u_{m+1}-u_{m}\|_{0}\leq\delta_{m+1}^{1/2},\quad\|\nabla(u_{m+1}-u_{m})\|_{0}\leq C_{\ast}\delta_{m+1}^{1/2},
\end{cases}
\end{align*}	
where $\sigma_{0},C_{\ast},\mathcal{G}_{\ast}$ are given by Lemma \ref{lem002}, \eqref{AQ56} and \eqref{AQ68}, respectively.
\end{prop}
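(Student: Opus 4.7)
The plan is to realize $u_{m+1}$ as the result of adding $N_*$ Nash-spiral perturbations to a mollified version of $u_m$, each perturbation addressing one direction $\xi_i$ of a Källén decomposition of the rescaled metric defect, with the oscillatory residues handled via iterated applications of Lemma~\ref{lem08}.

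First I would mollify $u_m$ and $g_m$ at a length scale $\ell_m$ chosen as a negative power of $\lambda_{m+1}$ to obtain a smooth short immersion $\tilde u_m$ on the slightly smaller domain $\overline{\Omega}_{m+1}$. Using Lemma~\ref{LEM08} and the hypothesis $\|g_m - u_m^{\sharp}e\|_0 \leq \sigma_0\delta_{m+1}$, the rescaled target defect
\begin{align*}
h_m := \delta_{m+1}^{-1}\bigl(g_{m+1} + (\delta_{m+1}-\delta_{m+2})h_{\ast} - \tilde u_m^{\sharp}e\bigr)
\end{align*}
lies in a $2\sigma_0$-neighborhood of $h_{\ast}$ in $C^0$ and satisfies $\|h_m\|_k \lesssim \mu_m^k$ for an intermediate frequency $\mu_m$ chosen between $\ell_m^{-1}$ and $\lambda_{m+1}$. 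Applying Lemma~\ref{lem002} with a large truncation index $j$ then yields the Källén decomposition of $h_m$ as $\sum_{i=1}^{n_{\ast}}(a_i^j)^2\xi_i\otimes\xi_i$ plus harmless lower-order gradient and $\mathcal{E}^j$ pieces.

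Next I would run $N_*$ substeps $\tilde u_m = u_m^{(0)}\mapsto u_m^{(1)}\mapsto\cdots\mapsto u_m^{(N_*)}$, each a Nash spiral of amplitude $\sim\delta_{m+1}^{1/2}a_i^j/\lambda_i$ in direction $\xi_i$, built from two orthonormal normal vectors $\eta_1^{(i)},\eta_2^{(i)}$ furnished by Lemma~\ref{lem012} applied to $u_m^{(i-1)}$. The frequencies $\lambda_1<\cdots<\lambda_{N_*}=\lambda_{m+1}$ form a geometric progression. Expanding the pulled-back metric $(u_m^{(i)})^{\sharp}e$ shows that the $i$-th substep produces the desired principal term $\delta_{m+1}(a_i^j)^2\xi_i\otimes\xi_i$ plus oscillatory residues of the form $\gamma(\lambda_i x\cdot\xi_i)\,\mathcal{Q}$ with $\mathcal{Q}$ having derivative scale $\mu_m\ll\lambda_i$. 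Lemma~\ref{lem08} decomposes each such residue into a symmetric gradient $2\operatorname{sym}(\nabla w^j)$ (absorbed by slightly enlarging the current perturbation), a superexponentially small $\mathcal{G}^j$-piece, and a structural complement $\mathcal{R}^j$ valued in $\operatorname{span}\{\xi_l\otimes\xi_l:l=n+1,\ldots,n_{\ast}\}$. I would order the substeps so that the coordinate directions $\xi_1,\ldots,\xi_n$ are processed first and the off-diagonal directions $\xi_{n+1},\ldots,\xi_{n_{\ast}}$ afterward, so that the accumulated $\mathcal{R}^j$-pieces become precisely the ingredients that the isomorphism in Lemma~\ref{lem06} lets us cancel in the remaining substeps.

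Setting $u_{m+1} := u_m^{(N_*)}$, the $C^0$ and Lipschitz bounds would follow by summing the amplitudes $\delta_{m+1}^{1/2}/\lambda_i$ and their gradients over the $N_*$ substeps, where $C_{\ast}$ and $\mathcal{G}_{\ast}$ collect the implicit constants; the monotone $C^1$ increment $2^{-m-1}\varepsilon$ then comes from the geometric decay of $\delta_{m+1}^{1/2}$ encoded in \eqref{M06}. The $C^2$ bound $\|\nabla^2 u_{m+1}\|_0 \leq \delta_{m+1}^{1/2}\lambda_{m+1}$ is dominated by the largest-frequency substep. The main obstacle is the joint tuning of $\ell_m$, the substep frequencies $\lambda_i$, the integration-by-parts index $j$, and the intermediate frequency $\mu_m$ so that the cumulative error after $N_*$ substeps fits beneath $\sigma_0\delta_{m+2}$. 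The key tension is between the oscillation gain $(\mu_m/\lambda_i)^j$ furnished by Lemma~\ref{lem08}, which effectively lets one shed derivatives at low cost, and the derivative loss $\ell_m^{-k}$ in the $C^k$ bounds of the mollified quantities. Balancing these fixes the exponents in \eqref{OPM01} and the relation \eqref{W90} between $b$ and $\vartheta$, with the parity-dependent improvement in odd dimensions tracing back to the structural difference between the cases $n$ odd and $n$ even in the efficient packing of simultaneous Nash spirals within the available $n$-dimensional normal bundle.
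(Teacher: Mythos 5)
Your proposal follows the same broad skeleton as the paper's argument: mollify at a scale $\ell_m$, apply the K\"all\'en decomposition of Lemma~\ref{lem002} to the rescaled defect, build Nash-spiral perturbations in the coordinate directions $\xi_1,\dots,\xi_n$, handle the resulting high-frequency residues by the iterated integration by parts of Lemma~\ref{lem08} so that they land in $\mathrm{span}\{\xi_i\otimes\xi_i : i>n\}$, and then cancel the off-diagonal directions using simultaneous perturbations that fully occupy the $n$-dimensional normal bundle. The parity discussion at the end also matches the mechanism in the paper. So the route is essentially the paper's.

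There is, however, one gap important enough to derail the claimed exponent. You write that ``the frequencies $\lambda_1<\cdots<\lambda_{N_*}=\lambda_{m+1}$ form a geometric progression.'' In the paper the frequency schedule is deliberately \emph{non-uniform}: for the first $n$ (Nash-spiral) substeps the ratio is the tiny quantity $\Lambda^{1/J}$ with $J\gg 1$, while only the subsequent $\lfloor(n-1)/2\rfloor$ (odd) or $n/2$ (even) Kuiper-corrugation stages use the full ratio $\Lambda$. This asymmetry is precisely what Lemma~\ref{lem08} buys you: the iterated integration by parts reroutes the Nash-spiral residues into the off-diagonal span without relying on frequency separation, so the first $n$ steps contribute essentially nothing to the total frequency growth $\lambda_{m,0}\to\lambda_{m,\lfloor 3n/2\rfloor}$. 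With a uniform geometric progression over all $\lfloor 3n/2\rfloor$ substeps you would instead pay the full price $\Lambda^{\lfloor 3n/2\rfloor}$ in the $C^2$ bound, and the balance would land you at a H\"older exponent on the order of $1/(3n)$ rather than the stated $1/n$ (odd) or $1/(n+1)$ (even). You flag the ``joint tuning'' as the hard part, but the particular structure of the tuning --- slow growth on the first $n$ substeps because Lemma~\ref{lem08} makes frequency separation unnecessary there --- is the crux and needs to be made explicit.

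Two smaller imprecisions: the off-diagonal stages use simultaneous \emph{Kuiper corrugations} (one normal direction each, hence $n$ per stage) rather than simultaneous Nash spirals (which would need two normal directions each and hence could only pack $n/2$ per stage); the paper does use $n/2$ simultaneous Nash spirals, but only once, in the even-dimensional case, before switching to corrugations. And $N_*$ in the paper is the number of Nash spirals in the \emph{initialization} of $u_0$ (\S3.4, equation \eqref{W16}), not the number of substeps in each stage of Proposition~\ref{pro01}, which is $\lfloor 3n/2\rfloor$.
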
	

\begin{remark}
From the proofs of Proposition \ref{pro01} and Theorem \ref{Main01}, we see that the value of the parameter $a$ depends only on $\varepsilon,n,\|g\|_{1},\|\bar{u}\|_{1+\varepsilon},\|g-\bar{u}^{\sharp}e\|_{0}$, but is independent of the sequence index $m$. In particular, if $\varepsilon\rightarrow0,$ then $a\rightarrow\infty$. More importantly, apart from the parameter $a$, the values of all other introduced parameters including $b,\vartheta,\alpha,J$ are given explicitly in \eqref{W90} and \eqref{W901}. This improvement not only clarifies their precise dependence on the accuracy parameter $\varepsilon$ and the dimension $n$, but also greatly facilitates the reader’s understanding for the applied techniques and methods.

\end{remark}	

\begin{remark}\label{REM01}
Using the interpolation inequality, we have
\begin{align*}
\|u_{m+1}-u_{m}\|_{1+\theta}\leq C&\|u_{m+1}-u_{m}\|_{1}^{1-\theta}\|u_{m+1}-u_{m}\|_{2}^{\theta}\leq Ca^{\tau\theta-b^{m+2}(\frac{\vartheta}{b}-\theta)}.	
\end{align*}	
From \eqref{W90}, we have $\theta<\frac{\vartheta}{b}<\theta+\varepsilon$, which shows that $\tau\theta-b^{m+2}(\frac{\vartheta}{b}-\theta)<0$ for any $m>\log_{b}\frac{\tau\theta}{b(\vartheta-b\theta)}$.
\end{remark}
\subsection{Mollification}
 Denote
\begin{align*}
h_{m}:=\frac{g\ast\varphi_{\ell}-(u_{m}\ast\varphi_{\ell})^{\sharp}e-\delta_{m+2}h_{\ast}}{\delta_{m+1}},
\end{align*}	
where $\varphi_{\ell}$ denotes the standard mollifying kernel on $\mathbb{R}^{n}$. Starting from the mollified $u_{m}\ast\varphi_{\ell}$, we aim to construct a new map $u_{m+1}$ satisfying the iterative requirements in Proposition \ref{pro01}. Note that 
\begin{align*}
g_{m+1}	-u_{m+1}^{\sharp}e=g-g\ast\varphi_{\ell}+\delta_{m+1}h_{m}+(u_{m}\ast\varphi_{\ell})^{\sharp}e-u_{m+1}^{\sharp}e.
\end{align*}	
For later use, we define two constants as follows:
\begin{align}\label{W901}
	\alpha=\frac{\varepsilon^{2}}{10},\quad J=\Big[\frac{4}{(n-1)\varepsilon}\Big]+3.	
\end{align}	
Here and below, the notation $[\cdot]$ denotes the integer part of a real number. For the purpose of not increasing the metric error, we pick the value of $\ell$ as follows:
\begin{align}\label{W018}
0<\ell=\frac{\delta_{m+1}^{1/2}}{\delta_{m}^{1/2}\lambda_{m}a^{\alpha}}<2^{-m-1},
\end{align}
where $\alpha$ is defined by \eqref{W901}. Using Lemma \ref{LEM08}, we have
\begin{align*}
&\delta_{m+2}^{-1}\|g-g\ast\varphi_{\ell}\|_{0}\leq \delta_{m+2}^{-1}\|g\|_{1}\ell\leq (\lambda_{\ast}\delta_{\ast})^{-1}\|g\|_{1}a^{-(\tau+\alpha)-b^{m+1}(1-\vartheta(2b+1/b-1))}\notag\\
&\leq(\lambda_{\ast}\delta_{\ast})^{-1}\|g\|_{1}a^{-(\tau+\alpha)-b(1-\vartheta(2b+1/b-1))}\leq \frac{\sigma_{0}}{2},
\end{align*}
and
\begin{align*}
&\|h_{m}-h_{\ast}\|_{0}\notag\\
&\leq\delta^{-1}_{m+1}\big(\delta_{m+2}\|h_{\ast}\|_{0}+\|(g_{m}-u_{m}^{\sharp}e)\ast\varphi_{\ell}\|_{0}+\|(u_{m}^{\sharp}e)\ast\varphi_{\ell}-(u_{m}\ast\varphi_{\ell})^{\sharp}e\|_{0}\big)\notag\\
&\leq \lambda_{m}^{-2\vartheta(b-1)}\|h_{\ast}\|_{0}+\sigma_{0}+C\delta^{-1}_{m+1}\ell^{2}\|\nabla u_{m}\|_{1}^{2}\notag\\
&\leq a^{-2\vartheta b(b-1)}\|h_{\ast}\|_{0}+\sigma_{0}+Ca^{-2\alpha}\leq2\sigma_{0}.
\end{align*}
Analogously, we obtain that for $k\geq1$,
\begin{align*}
\|h_{m}\|_{k}\leq&\delta_{m+1}^{-1}\big(\|(g-u_{m}^{\sharp}e)\ast\varphi_{\ell}\|_{k}+\|(u_{m}^{\sharp}e)\ast\varphi_{\ell}-(u_{m}\ast\varphi_{\ell})^{\sharp}e\|_{k}\big)\notag\\	
\leq& C\ell^{-k}+C\frac{\ell^{2-k}\delta_{m}\lambda_{m}^{2}}{\delta_{m+1}}\leq C\ell^{-k}.
\end{align*}	
Therefore, based on these above facts, the problem reduces to finding a new immersion $u_{m+1}$ such that the difference of $(u_{m}\ast\varphi_{\ell})^{\sharp}e-u_{m+1}^{\sharp}e$ can be used to cancel the previous gap of  $\delta_{m+1}h_{m}$ up to a metric error of order $o(1)\delta_{m+2}$, where $o(1)$ denotes a quantity vanishing as the parameter $a\rightarrow\infty$.

Denote 
\begin{align}\label{W86}
 \lambda_{m,0}=\ell^{-1},\quad\Lambda=\frac{\delta_{m+1}a^{\alpha}}{\delta_{m+2}},
\end{align}
and
\begin{align*}
\lambda_{m,i}=
\begin{cases}
\sqrt[J]{\Lambda}\lambda_{m,i-1},&\mathrm{for}\; i=1,...,n,\\
 \Lambda\lambda_{m,i-1},&\mathrm{for}\;i=n+1,...,[\frac{3n}{2}],
 \end{cases}\quad\Big[\frac{3n}{2}\Big]=
\begin{cases}
\frac{3n-1}{2},&\text{if $n$ is odd},\\
\frac{3n}{2},&\text{if $n$ is even},	
\end{cases}	 
\end{align*}	
where the values of $\alpha,J$ are given by \eqref{W901}. To make the construction of $u_{m+1}$ more transparent, we first apply Lemma \ref{lem002} with $\mu_{i}=\lambda_{m,i},\,i=0,1,...,n_{\ast}$ to decompose the deficit into a finite sum of primitive metrics consisting of rank-one tensors with smooth positive coefficients as follows: 
\begin{align}\label{AQ06}
	h_{m}=\sum^{n_{\ast}}_{i=1}a_{i}^{2}\xi_{i}\otimes\xi_{i}+\sum^{n}_{i=1}\lambda_{m,i}^{-2}\nabla a_{i}\otimes\nabla a_{i}+\mathcal{E},	
\end{align}
where $a_{i}:=a_{i}^{J}$ and $\mathcal{E}:=\mathcal{E}^{J}$ satisfy that for $1\leq i\leq n_{\ast}$ and $k\geq0$,
\begin{align*}
a_{i}\geq\sqrt{\sigma_{\ast}}/2,\quad\|a_{i}\|_{k}\leq C(n)\lambda_{m,0}^{k},\quad\|\mathcal{E}\|_{k}\leq C(n)\Lambda^{-2(J+1)/J}\lambda_{m,0}^{k}.
\end{align*}	
This decomposition further reduces the task to constructing a sequence of perturbations designed to eliminate the individual primitive metrics.

\subsection{Constructions of the first $n$ perturbations}\label{sub01}
Let $u_{m,0}:=u_{m}\ast\varphi_{\ell}$. Then we obtain from \eqref{A27} that
\begin{align}\label{Q01}
\|u_{m,0}-u_{m}\|_{1}\leq C\ell\|u_{m}\|_{2}\leq C_{1}(n)\delta_{m+1}^{1/2}a^{-\alpha}.
\end{align}
For simplicity, denote 
\begin{align*}
\gamma_{i,1}=\sin(\lambda_{m,i}x\cdot\xi_{i}),\quad \gamma_{i,2}=\cos(\lambda_{m,i}x\cdot\xi_{i}),	\quad 1\leq i\leq n.
\end{align*}
Inspired by a new corrugation ansatz of Kuiper-type given in \cite{CHI2025}, we introduce the adapted Nash-type spirals in the high-codimension setting that match the iterative decomposition in Lemma \ref{lem08} as follows: for $1\leq i\leq n$, 
\begin{align*}
u_{m,i}=u_{m,i-1}+\frac{\delta_{m+1}^{1/2}a_{i}}{\lambda_{m,i}}	\big(\gamma_{i,1}\zeta_{i-1}+\gamma_{i,2}\eta_{i-1}\big)+\delta_{m+1}\mathcal{F}_{i-1}w_{i-1},
\end{align*}	
where $w_{i-1}$ is given by \eqref{AQD001} below, 
\begin{align}\label{AQ08}
\begin{cases}
\nabla u_{m,i-1}^{t}\zeta_{i-1}=\nabla u_{m,i-1}^{t}\eta_{i-1}=0,\; \zeta_{i-1}\cdot\eta_{i-1}=0,\;|\zeta_{i-1}|=|\eta_{i-1}|=1,\\
\mathcal{F}_{i-1}=\nabla u_{m,i-1}(u_{m,i-1}^{\sharp}e)^{-1},\quad\|w_{i-1}\|_{k}\leq C\Lambda^{-1/J}\lambda_{m,i}^{k-1},\quad k\geq0.
\end{cases}	
\end{align}	 
Here the normal vectors $\zeta_{i-1},\eta_{i-1}$ are given by Lemma \ref{lem012}. Based on the fact that
\begin{align*}
\begin{cases}	
\|u_{m,0}\|_{k}\leq C\lambda_{m,0}^{k-2}\|u_{m,0}\|_{2}\leq C\lambda_{m,0}^{k-2}\delta_{m}^{1/2}\lambda_{m},&k\geq2,\\
\delta_{m}^{1/2}\lambda_{m}\leq\delta_{m+1}^{1/2}\lambda_{m,0},\quad \|a_{i}\|_{k}\leq C\lambda_{m,0}^{k},&	1\leq i\leq n_{\ast},\;k\geq 0,
 \end{cases}
\end{align*}
it follows from a straightforward calculation that 
\begin{align}\label{Q02}
\|u_{m,n}-u_{m,0}\|_{k}\leq C_{2}(n)\delta_ {m+1}^{1/2}\lambda_{m,1}^{k-1},\quad k=0,1,
\end{align}
and, for $1\leq i\leq n,\,k\geq0,$
\begin{align}\label{AQ09}
\|u_{m,i}\|_{k}\leq C(1+\delta_{m+1}^{1/2}\lambda_{m,i}^{k-1}),
\end{align}	
and
\begin{align}\label{AQ10}
\|\zeta_{i-1}\|_{k},\|\eta_{i-1}\|_{k},\|\mathcal{F}_{i-1}\|_{k}\leq C\|u_{m,i-1}\|_{k+1}\leq C(1+\delta_{m+1}^{1/2}\lambda_{m,i-1}^{k}).	
\end{align}	
Then we have
\begin{lemma}\label{lem006}
There exists a function $\mathcal{R}\in C^{\infty}(\overline{\Omega}_{m},\mathrm{Sym}_{n})$ such that	
\begin{align*}
u_{m,n}^{\sharp}e=&u_{m,0}^{\sharp}e+\delta_{m+1}\sum^{n}_{i=1}\big(a_{i}^{2}\xi_{i}\otimes\xi_{i}+\lambda_{m,i}^{-2}\nabla a_{i}\otimes\nabla a_{i}\big)\notag\\
&+\delta_{m+1}\mathcal{R}+O(1)\delta_{m+1}\Lambda^{-1/J}\big(\Lambda^{-1}+\delta_{m+1}^{1/2}\big),	
\end{align*}	
where $\mathcal{R}\in\mathrm{span}\{\xi_{i}\otimes\xi_{i}:i=n+1,...,n_{\ast}\}$ satisfies
\begin{align*}
\|\mathcal{R}\|_{k}\leq	C\Lambda^{-1/J}\lambda_{m,n}^{k},\quad k\geq0.
\end{align*}	
\end{lemma}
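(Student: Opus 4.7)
The plan is to proceed inductively on $i=1,\ldots,n$, computing at each step the pullback metric increment $u_{m,i}^{\sharp}e-u_{m,i-1}^{\sharp}e$ and then telescoping. First I would compute
$$\nabla(u_{m,i}-u_{m,i-1})=\delta_{m+1}^{1/2}a_{i}\big(\gamma_{i,2}\xi_{i}\otimes\zeta_{i-1}-\gamma_{i,1}\xi_{i}\otimes\eta_{i-1}\big)+B_{i}+\delta_{m+1}\nabla(\mathcal{F}_{i-1}w_{i-1}),$$
where $B_{i}$ collects the derivatives falling on $a_{i},\zeta_{i-1},\eta_{i-1}$ and each piece of $B_{i}$ carries a factor $\lambda_{m,i}^{-1}$. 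Squaring the leading tensor and exploiting $\gamma_{i,1}^{2}+\gamma_{i,2}^{2}=1$ together with the orthonormality in \eqref{AQ08} produces the target contribution $\delta_{m+1}a_{i}^{2}\xi_{i}\otimes\xi_{i}$, while the self-pairing of the $\nabla a_{i}$-piece inside $B_{i}$ yields the remaining main term $\delta_{m+1}\lambda_{m,i}^{-2}\nabla a_{i}\otimes\nabla a_{i}$.

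Next I would treat the cross pairings. The leading pairing of $\nabla u_{m,i-1}$ with the principal spiral tensor vanishes by the normality conditions in \eqref{AQ08}; what remains has the schematic form $\gamma_{i,\ast}(\lambda_{m,i}x\cdot\xi_{i})\mathcal{Q}_{i}$ with $\mathcal{Q}_{i}$ built from $\nabla^{2}u_{m,i-1}$ together with $\nabla\zeta_{i-1},\nabla\eta_{i-1}$. Using \eqref{AQ09}--\eqref{AQ10} one obtains $\|\mathcal{Q}_{i}\|_{k}\leq C\delta_{m+1}\Lambda^{-1/J}\lambda_{m,i-1}^{k}$. Meanwhile, the identity $\nabla u_{m,i-1}^{t}\mathcal{F}_{i-1}=\mathrm{Id}$ built into the definition of $\mathcal{F}_{i-1}$ forces the cross pairing of $\delta_{m+1}\mathcal{F}_{i-1}w_{i-1}$ with $\nabla u_{m,i-1}$ to be exactly $2\delta_{m+1}\mathrm{sym}(\nabla w_{i-1})$ up to lower-order factors coming from $\nabla\mathcal{F}_{i-1}$.

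The crucial step is to invoke Lemma \ref{lem08} on each oscillatory error, with $\lambda=\lambda_{m,i}$, $\mu=\lambda_{m,i-1}$ and iteration depth $j=J$. This yields a decomposition $2\mathrm{sym}(\nabla\widetilde{w}_{i})+\gamma^{J}(\mu/\lambda)^{J}\mathcal{G}^{J}+\mathcal{R}_{i}^{J}$, with $\mathcal{R}_{i}^{J}\in\mathrm{span}\{\xi_{j}\otimes\xi_{j}:j=n+1,\ldots,n_{\ast}\}$. I would then \emph{define} $w_{i-1}=-\widetilde{w}_{i}/\delta_{m+1}$ (up to an overall sign), so that $2\delta_{m+1}\mathrm{sym}(\nabla w_{i-1})$ exactly cancels the symmetric-gradient part of the oscillatory error and leaves $\delta_{m+1}\mathcal{R}_{i}^{J}$ as the sole residual in the complementary subspace. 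The bound $\|w_{i-1}\|_{k}\leq C\Lambda^{-1/J}\lambda_{m,i}^{k-1}$ in \eqref{AQ08} then follows directly from $\|\widetilde{w}_{i}\|_{k}\leq CK\lambda_{m,i}^{k-1}$ with $K=\delta_{m+1}\Lambda^{-1/J}$, while the residual bound $\|\mathcal{R}_{i}^{J}\|_{k}\leq CK\lambda_{m,i}^{k}$, summed over $i$, produces the claimed estimate $\|\mathcal{R}\|_{k}\leq C\Lambda^{-1/J}\lambda_{m,n}^{k}$.

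Telescoping over $i=1,\ldots,n$ then assembles the stated formula. The main obstacle will be the careful bookkeeping: the cross and higher-order self-pairings inside $B_{i}^{t}B_{i}$ beyond the leading $\nabla a_{i}\otimes\nabla a_{i}$ piece, the genuinely quadratic term $(\delta_{m+1}\mathcal{F}_{i-1}w_{i-1})^{\sharp}e$ of order $\delta_{m+1}^{2}\Lambda^{-2/J}$, the $(\mu/\lambda)^{J}$-correction from Lemma \ref{lem08} of order $\delta_{m+1}\Lambda^{-1-1/J}$, and the cross contributions between consecutive correction layers all need to be absorbed into the envelope $O(1)\delta_{m+1}\Lambda^{-1/J}(\Lambda^{-1}+\delta_{m+1}^{1/2})$. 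The $\Lambda^{-1}$ ingredient comes from the $J$-fold integration by parts, whereas the $\delta_{m+1}^{1/2}$ factor is precisely what is needed to accommodate the quadratic correction.
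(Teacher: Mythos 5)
Your proposal follows essentially the same route as the paper's proof: expand $u_{m,i}^{\sharp}e$ term by term, use the normality relations in \eqref{AQ08} to isolate the targeted rank-one pieces and the symmetric gradient coming from $\mathcal{F}_{i-1}w_{i-1}$, apply Lemma \ref{lem08} at depth $J$ to the oscillatory cross-term $\gamma_{i,j}\mathcal{Q}_{i,j}$, and choose $w_{i-1}$ so the symmetric-gradient parts cancel, leaving a residual in $\mathrm{span}\{\xi_{k}\otimes\xi_{k}:n<k\leq n_{\ast}\}$ plus an error of size $\delta_{m+1}\Lambda^{-1/J}(\Lambda^{-1}+\delta_{m+1}^{1/2})$. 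Two cosmetic slips do not affect the validity: the leading tensor should be $\zeta_{i-1}\otimes\xi_{i}$ rather than $\xi_{i}\otimes\zeta_{i-1}$, and since you absorb $\delta_{m+1}$ into $\mathcal{Q}_{i}$ (taking $K=\delta_{m+1}\Lambda^{-1/J}$) the final $\mathcal{R}$ must be defined as $\delta_{m+1}^{-1}\sum_{i}\mathcal{R}_{i}^{J}$ so that the statement's $\delta_{m+1}\mathcal{R}$ and the bound $\|\mathcal{R}\|_{k}\leq C\Lambda^{-1/J}\lambda_{m,n}^{k}$ come out correctly.
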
	
\begin{remark}
From the proof of Lemma \ref{lem006},  we see that the leading error terms consist of a low-frequency component $\delta_{m+1}\lambda_{m,i}^{-2}\nabla a_{i}\otimes\nabla a_{i}$ and a high-frequency component $\delta_{m+1}\sum^{2}_{j=1}\gamma_{i,j}\mathcal{Q}_{i,j}$ in the following. The low-frequency part is absorbed as a perturbation into the geometric decomposition in \eqref{AQ06}, while the high-frequency portion is managed through the iterative decomposition in Lemma \ref{lem08}.
\end{remark}
\begin{proof}
Note that
\begin{align*}
\nabla u_{m,i}&=\nabla u_{m,i-1}+\delta_{m+1}^{1/2}a_{i}\gamma_{i,2}\zeta_{i-1}\otimes\xi_{i}-\delta_{m+1}^{1/2}a_{i}\gamma_{i,1}\eta_{i-1}\otimes\xi_{i}\notag\\	
&\quad+\delta_{m+1}^{1/2}\lambda_{m,i}^{-1}\gamma_{i,1}\nabla(a_{i}\zeta_{i-1})+\delta_{m+1}^{1/2}\lambda_{m,i}^{-1}\gamma_{i,2}\nabla(a_{i}\eta_{i-1})\notag\\
&\quad+\delta_{m+1}\mathcal{F}_{i-1}\nabla w_{i-1}+\delta_{m+1}\nabla \mathcal{F}_{i-1}w_{i-1}\notag\\
&=:\nabla u_{m,i-1}+\mathcal{A}_{1}+\mathcal{A}_{2}+\mathcal{B}_{1}+\mathcal{B}_{2}+\mathcal{C}_{1}+\mathcal{C}_{2}.
\end{align*}	
Then we have
\begin{align*}
\nabla u_{m,i}^{t}\nabla u_{m,i}=&\nabla u_{m,i-1}^{t}\nabla u_{m,i-1}+2\mathrm{sym}\big(\nabla u_{m,i-1}^{t}(\mathcal{A}+\mathcal{B}+\mathcal{C})\big)\notag\\
&+(\mathcal{A}^{t}+\mathcal{B}^{t}+\mathcal{C}^{t})(\mathcal{A}+\mathcal{B}+\mathcal{C}),
\end{align*}	
where
\begin{align*}
\mathcal{A}=\mathcal{A}_{1}+\mathcal{A}_{2},\quad	\mathcal{B}=\mathcal{B}_{1}+\mathcal{B}_{2},\quad\mathcal{C}=\mathcal{C}_{1}+\mathcal{C}_{2}.
\end{align*}
From \eqref{AQ08}, we have
\begin{align*}
\nabla u_{m,i-1}^{t}\mathcal{A}=0,\quad\nabla u_{m,i-1}^{t}\mathcal{C}_{1}=\delta_{m+1}\nabla w_{i-1},\quad\|\nabla u_{m,i-1}^{t}\mathcal{C}_{2}\|_{0}\leq C\delta_{m+1}^{3/2}\Lambda^{-2/J},
\end{align*}
and
\begin{align*}
\nabla u_{m,i-1}^{t}\mathcal{B}=\frac{\delta_{m+1}^{1/2}a_{i}}{\lambda_{m,i}}(\gamma_{i,1}\nabla u_{m,i-1}^{t}\nabla\zeta_{i-1}+\gamma_{i,2}\nabla u_{m,i-1}^{t}\nabla\eta_{i-1}).	
\end{align*}		
For $i=1,...,n$, write
\begin{align*}
\mathcal{Q}_{i,1}=\frac{2a_{i}}{\delta_{m+1}^{1/2}\lambda_{m,i}}\mathrm{sym}(\nabla u_{m,i-1}^{t}\nabla\zeta_{i-1}),\;\, \mathcal{Q}_{i,2}=\frac{2a_{i}}{\delta_{m+1}^{1/2}\lambda_{m,i}}\mathrm{sym}(\nabla u_{m,i-1}^{t}\nabla\eta_{i-1}).
\end{align*}
It then follows from \eqref{AQ09}--\eqref{AQ10} that
\begin{align}\label{AQ12}
\sum^{2}_{j=1}\|\mathcal{Q}_{i,j}\|_{k}	\leq C\Lambda^{-1/J}\lambda_{m,i-1}^{k},\quad k\geq0.
\end{align}		
Then we obtain
\begin{align*}
&2\mathrm{sym}\big(\nabla u_{m,i-1}^{t}(\mathcal{A}+\mathcal{B}+\mathcal{C})\big)\notag\\
&=2\delta_{m+1}\mathrm{sym}(\nabla w_{i-1})+\delta_{m+1}\sum^{2}_{j=1}\gamma_{i,j}\mathcal{Q}_{i,j}+O(1) \delta_{m+1}^{3/2}\Lambda^{-2/J}.
\end{align*}	

It remains to calculate the term $(\mathcal{A}^{t}+\mathcal{B}^{t}+\mathcal{C}^{t})(\mathcal{A}+\mathcal{B}+\mathcal{C})$. Since $\zeta_{i-1}\cdot\eta_{i-1}=0$, we have
\begin{align*}
\mathcal{A}^{t}\mathcal{A}=\delta_{m+1}a_{i}^{2}\xi_{i}\otimes\xi_{i},	
\end{align*}	
and
\begin{align*}
\mathcal{A}^{T}\mathcal{B}=&\frac{\delta_{m+1}a_{i}}{\lambda_{m,i}}(\gamma_{i,2}\xi_{i}\otimes\zeta_{i-1}-\gamma_{i,1}\xi_{i}\otimes\eta_{i-1})(\gamma_{i,1}\nabla(a_{i}\zeta_{i-1})+\gamma_{i,2}\nabla(a_{i}\eta_{i-1}))\notag\\
=&\frac{\delta_{m+1}a_{i}\gamma_{i,1}\gamma_{i,2}}{\lambda_{m,i}}\big(\xi_{i}\otimes\zeta_{i-1}\nabla(a_{i}\zeta_{i-1})-\xi_{i}\otimes\eta_{i-1}\nabla(a_{i}\eta_{i-1})\big)\notag\\
&+\frac{\delta_{m+1}a_{i}}{\lambda_{m,i}}\big(\gamma_{i,2}^{2}\xi_{i}\otimes\zeta_{i-1}\nabla(a_{i}\eta_{i-1})-\gamma^{2}_{i,1}\xi_{i}\otimes\eta_{i-1}\nabla(a_{i}\zeta_{i-1})\big)\notag\\
=&\frac{\delta_{m+1}a_{i}^{2}\gamma_{i,1}\gamma_{i,2}}{\lambda_{m,i}}(\xi_{i}\otimes\zeta_{i-1}\nabla \zeta_{i-1}-\xi_{i}\otimes\eta_{i-1}\nabla\eta_{i-1})\notag\\
&+\frac{\delta_{m+1}a_{i}^{2}}{\lambda_{m,i}}(\gamma_{i,2}^{2}\xi_{i}\otimes\zeta_{i-1}\nabla\eta_{i-1}-\gamma_{i,1}^{2}\xi_{i}\otimes\eta_{i-1}\nabla\zeta_{i-1}),
\end{align*}	
which, together with \eqref{AQ10}, gives that
\begin{align*}
2\mathrm{sym}(\mathcal{A}^{T}\mathcal{B})=O(1)\delta_{m+1}^{3/2}\Lambda^{-1/J}.
\end{align*}	
In view of \eqref{AQ08}--\eqref{AQ10}, it follows from a direct computation that
\begin{align*}
\begin{cases}
2\mathrm{sym}(\mathcal{A}^{t}\mathcal{C})=O(1)\delta_{m+1}^{3/2}\Lambda^{-1/J},\quad2\mathrm{sym}(\mathcal{B}^{t}\mathcal{C})=O(1)\delta_{m+1}^{3/2}\Lambda^{-(i-1)/J},	\\
2\mathrm{sym}(\mathcal{C}^{t}\mathcal{C})=O(1)\delta_{m+1}^{2}\Lambda^{-2/J}.
\end{cases}
\end{align*}	
We proceed to compute the final term $\mathcal{B}^{t}\mathcal{B}$. Observe first that
 \begin{align*}
	\mathcal{B}^{t}\mathcal{B}&=\frac{\delta_{m+1}}{\lambda_{m,i}^{2}}\big(\gamma_{i,1}^{2}\nabla(a_{i}\zeta_{i-1})^{t}\nabla(a_{i}\zeta_{i-1})+\gamma_{i,2}^{2}\nabla(a_{i}\eta_{i-1})^{t}\nabla(a_{i}\eta_{i-1})\big)\notag\\
	&\quad+\frac{\delta_{m+1}\gamma_{i,1}\gamma_{i,2}}{\lambda_{m,i}^{2}}\big(\nabla(a_{i}\zeta_{i-1})^{t}\nabla(a_{i}\eta_{i-1})+\nabla(a_{i}\eta_{i-1})^{t}\nabla(a_{i}\zeta_{i-1})\big)\notag\\
	&=:\frac{\delta_{m+1}}{\lambda_{m,i}^{2}}I_{1}+\frac{\delta_{m+1}\gamma_{i,1}\gamma_{i,2}}{\lambda_{m,i}^{2}}I_{2}.
\end{align*}	
From the fact of $\zeta_{i-1}\cdot\eta_{i-1}=0$, we have
\begin{align*}
&\nabla(a_{i}\zeta_{i-1})^{t}\nabla(a_{i}\eta_{i-1})=(\nabla a_{i}\otimes\zeta_{i-1}+a_{i}\nabla\zeta_{i-1}^{t})(\eta_{i-1}\otimes\nabla a_{i}+a_{i}\nabla\eta_{i-1})\notag\\
&=a_{i}\nabla a_{i}\otimes\zeta_{i-1}\nabla\eta_{i-1}+a_{i}\nabla\zeta_{i-1}^{t}\eta_{i-1}\otimes\nabla a_{i}+a_{i}^{2}\nabla\zeta_{i-1}^{t}\nabla\eta_{i-1},	
\end{align*}	
 and
 \begin{align*}
 	&\nabla(a_{i}\zeta_{i-1})^{t}\nabla(a_{i}\zeta_{i-1})=(\nabla a_{i}\otimes\zeta_{i-1}+a_{i}\nabla\zeta_{i-1}^{t})(\zeta_{i-1}\otimes\nabla a_{i}+a_{i}\nabla\zeta_{i-1})\notag\\
 	&=\nabla a_{i}\otimes\nabla a_{i}+a_{i}\nabla a_{i}\otimes\zeta_{i-1}\nabla\zeta_{i-1}+a_{i}\nabla\zeta_{i-1}^{t}\zeta_{i-1}\otimes\nabla a_{i}+a_{i}^{2}\nabla\zeta_{i-1}^{t}\nabla\zeta_{i-1}.
 \end{align*}	
These two equations, in combination with \eqref{AQ09}--\eqref{AQ10}, show that
\begin{align*}
I_{1}=&\nabla a_{i}\otimes\nabla a_{i}+a_{i}\nabla a_{i}\otimes\big(\gamma_{i,1}^{2}\zeta_{i-1}\nabla\zeta_{i-1}+\gamma_{i,2}^{2}\eta_{i-1}\otimes\nabla\eta_{i-1}\big)\notag\\
&+a_{i}\big(\gamma_{i,1}^{2}\nabla\zeta_{i-1}^{t}\zeta_{i-1}+\gamma_{i,2}^{2}\nabla\eta_{i-1}^{t}\eta_{i-1}\big)\otimes\nabla a_{i}\notag\\
&+a_{i}^{2}\big(\gamma_{i,1}^{2}\nabla\zeta_{i-1}^{t}\nabla\zeta_{i-1}+\gamma_{i,2}^{2}\nabla\eta_{i-1}^{t}\nabla\eta_{i-1}\big)\notag\\
=&\nabla a_{i}\otimes\nabla a_{i}+O(1)\delta_{m+1}^{1/2}\lambda_{m,0}\lambda_{m,i-1},	
\end{align*}	
 and 
 \begin{align*}
 I_{2}=&a_{i}\nabla a_{i}\otimes (\zeta_{i-1}\nabla\eta_{i-1}+\eta_{i-1}\nabla\zeta_{i-1})+a_{i}(\nabla\zeta_{i-1}^{t}\eta_{i-1}+\nabla\eta_{i-1}^{t}\zeta_{i-1})\otimes\nabla a_{i}\notag\\	
 &+a_{i}^{2}(\nabla\zeta_{i-1}^{t}\nabla\eta_{i-1}+\nabla\eta_{i-1}^{t}\nabla\zeta_{i-1})=O(1)\delta_{m+1}^{1/2}\lambda_{m,0}\lambda_{m,i-1}.
 \end{align*}	
 Then we have
\begin{align*}
\mathcal{B}^{t}\mathcal{B}&=\frac{\delta_{m+1}}{\lambda_{m,i}^{2}}\nabla a_{i}\otimes\nabla a_{i}+O(1)\delta_{m+1}^{3/2}\Lambda^{-(i+1)/J},
\end{align*}	
and 
\begin{align*}
&(\mathcal{A}^{t}+\mathcal{B}^{t}+\mathcal{C}^{t})(\mathcal{A}+\mathcal{B}+\mathcal{C})\notag\\	
&=\delta_{m+1}a_{i}^{2}\xi_{i}\otimes\xi_{i}+\frac{\delta_{m+1}}{\lambda_{m,i}^{2}}\nabla a_{i}\otimes\nabla a_{i}+O(1)\delta_{m+1}^{3/2}\Lambda^{-1/J}.
\end{align*}	 
 Therefore, a consequence of these above facts leads to that
 \begin{align}\label{AQ15}
 \nabla u_{m,i}^{t}\nabla u_{m,i}=&\nabla u_{m,i-1}^{t}\nabla u_{m,i-1}+2\delta_{m+1}\mathrm{sym}(\nabla w_{i-1})+\delta_{m+1}\sum^{2}_{j=1}\gamma_{i,j}\mathcal{Q}_{i,j}\notag\\
 &+\delta_{m+1}a_{i}^{2}\xi_{i}\otimes\xi_{i}+\frac{\delta_{m+1}}{\lambda_{m,i}^{2}}\nabla a_{i}\otimes\nabla a_{i}+O(1)\delta_{m+1}^{3/2}\Lambda^{-1/J}.
 \end{align}	
In light of \eqref{AQ12}, we deduce from Lemma \ref{lem08} that for $i=1,...,n$ and $j=1,2,$ there exist $w_{i,j}\in C^{\infty}(\overline{\Omega}_{m},\mathbb{R}^{n})$, $\mathcal{G}_{i,j}\in C^{\infty}(\overline{\Omega}_{m},\mathrm{Sym}_{n})$, $\mathcal{R}_{i,j}\in C^{\infty}(\overline{\Omega}_{m},\mathrm{Sym}_{n})$, and $\bar{\gamma}_{i,j}\in C^{\infty}(\mathbb{S}^{1})$ with zero mean such that 
 \begin{align*}
 	\gamma_{i,j}(\lambda_{m,i} x\cdot\xi_{i})\mathcal{Q}_{i,j}=2\mathrm{sym}(\nabla w_{i,j})+\bar{\gamma}_{i,j}(\lambda_{m,i} x\cdot\xi_{i})(\lambda_{m,i-1}\lambda_{m,i}^{-1})^{J}\mathcal{G}_{i,j}+\mathcal{R}_{i,j},	
 \end{align*}		
where $\mathcal{R}_{i,j}\in\mathrm{span}\{\xi_{k}\otimes\xi_{k}:k=n+1,...,n_{\ast}\}$, and for $k\geq0$,
\begin{align*}
 		\|w_{i,j}\|_{k}\leq C\Lambda^{-1/J}\lambda_{m,i}^{k-1},\;\,\|\mathcal{G}_{i,j}\|_{k}\leq C\Lambda^{-1/J}\lambda_{m,i-1}^{k},\;\,\|\mathcal{R}_{i,j}\|_{k}\leq C\Lambda^{-1/J}\lambda_{m,i}^{k}.	
\end{align*}		
Take
\begin{align}\label{AQD001}
w_{i-1}=-\sum^{2}_{j=1}w_{i,j},\quad\mathcal{R}_{i}=\sum^{2}_{j=1}\mathcal{R}_{i,j},\quad i=1,...,n.	
\end{align}	
Then we arrive at
\begin{align*}
&2\delta_{m+1}\mathrm{sym}(\nabla w_{i-1})+\delta_{m+1}\sum^{2}_{j=1}\gamma_{i,j}\mathcal{Q}_{i,j}\notag\\
&=\delta_{m+1}\mathcal{R}_{i}+\delta_{m+1}\sum^{2}_{j=1}\bar{\gamma}_{i,j}(\lambda_{m,i} x\cdot\xi_{i})(\lambda_{m,i-1}\lambda_{m,i}^{-1})^{J}\mathcal{G}_{i,j}.
\end{align*}	
 Substituting this into \eqref{AQ15}, we obtain
  \begin{align}\label{AQ16}
 	\nabla u_{m,i}^{t}\nabla u_{m,i}=&\nabla u_{m,i-1}^{t}\nabla u_{m,i-1}+\delta_{m+1}a_{i}^{2}\xi_{i}\otimes\xi_{i}+\frac{\delta_{m+1}}{\lambda_{m,i}^{2}}\nabla a_{i}\otimes\nabla a_{i}\notag\\
 	&+\delta_{m+1}\mathcal{R}_{i}+O(1)\delta_{m+1}\Lambda^{-1/J}\big(\Lambda^{-1}+\delta_{m+1}^{1/2}\big).
 \end{align}	
By picking $\mathcal{R}=\sum^{n}_{i=1}\mathcal{R}_{i}$, we have from \eqref{AQ16} that Lemma \ref{lem006} holds.
 
 \end{proof}

\subsection{Constructions of the remaining perturbations}
We start by recalling the key components in Kuiper-type  corrugations (see Proposition 3.5 in \cite{DIS2018}) as follows.
\begin{lemma}\label{lem09}
There exist a small constant $\varepsilon_{\ast}>0$ and a function $\Gamma=(\Gamma_{1},\Gamma_{2})\in C^{\infty}([0,\varepsilon_{\ast}]\times\mathbb{R},\mathbb{R}^{2})$ such that for any $(s,t)\in[0,\varepsilon_{\ast}]\times\mathbb{R}$,
\begin{align*}
\Gamma(s,t)=\Gamma(s,t+2\pi),\quad(1+\partial_{t}\Gamma_{1})^{2}+(\partial_{t}\Gamma_{2})^{2}=1+s^{2},	
\end{align*}	
and, for $i\geq0,$
\begin{align*}
\begin{cases}
\|\partial_{t}^{i}\Gamma_{1}(s,\cdot)\|_{0}\leq C(i)s^{2},\quad \|\partial_{t}^{i}\Gamma_{2}(s,\cdot)\|_{0}\leq C(i)s,\quad\|\partial_{s}\partial_{t}^{i}\Gamma_{1}(s,\cdot)\|_{0}\leq C(i)s,\\
\|\partial_{s}\partial_{t}^{i}\Gamma_{2}(s,\cdot)\|_{0}\leq C(i),\quad\|\partial_{s}^{2}\partial_{t}^{i}\Gamma(s,\cdot)\|_{0}\leq C(i).
\end{cases}	
\end{align*}	

\end{lemma}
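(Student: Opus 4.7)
The strategy is the classical phase-parameterization behind Kuiper-type corrugations, which reduces the construction to a scalar implicit function problem. First, parameterize the quadratic constraint via a phase angle $\theta(s,t)$ by setting
\[
1+\partial_{t}\Gamma_{1}=\sqrt{1+s^{2}}\,\cos\theta(s,t),\qquad \partial_{t}\Gamma_{2}=\sqrt{1+s^{2}}\,\sin\theta(s,t),
\]
so that the identity $(1+\partial_{t}\Gamma_{1})^{2}+(\partial_{t}\Gamma_{2})^{2}=1+s^{2}$ becomes automatic. The task is then to select a smooth, $2\pi$-periodic $\theta(s,\cdot)$ and recover $\Gamma_{1},\Gamma_{2}$ as $t$-antiderivatives; for these primitives themselves to be $2\pi$-periodic in $t$, one must enforce the mean-zero conditions $\int_{0}^{2\pi}\sin\theta(s,t)\,dt=0$ and $\sqrt{1+s^{2}}\int_{0}^{2\pi}\cos\theta(s,t)\,dt=2\pi$.

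Next, take the phase $\theta(s,t)=\alpha(s)\sin t$ for a smooth scalar function $\alpha$ to be determined. By the oddness of $\sin t$ under $t\mapsto -t$, the first mean-zero condition holds automatically. Via the classical integral representation $\frac{1}{2\pi}\int_{0}^{2\pi}\cos(\alpha\sin t)\,dt=J_{0}(\alpha)$ for the Bessel function $J_{0}$, the second reduces to the scalar equation $F(s,\alpha):=\sqrt{1+s^{2}}\,J_{0}(\alpha)-1=0$. Using $J_{0}(0)=1$, $J_{0}'(0)=0$, $J_{0}''(0)=-1/2$, a Taylor expansion gives $F(s,\alpha)=\tfrac{s^{2}}{2}-\tfrac{\alpha^{2}}{4}+O(s^{4}+\alpha^{4})$, so the zero set near the origin is a smooth curve parameterized as $\alpha=\alpha(s)=\sqrt{2}\,s+O(s^{3})$ on $[0,\varepsilon_{\ast}]$ for a sufficiently small $\varepsilon_{\ast}>0$ (the implicit function theorem applies after factoring out $\alpha$ from $F$, whose resulting $\alpha$-derivative at the origin is nonzero). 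With $\alpha$ in hand, define
\[
\Gamma_{1}(s,t)=\int_{0}^{t}\!\bigl(\sqrt{1+s^{2}}\cos(\alpha(s)\sin\tau)-1\bigr)d\tau,\qquad \Gamma_{2}(s,t)=\int_{0}^{t}\!\sqrt{1+s^{2}}\sin(\alpha(s)\sin\tau)\,d\tau,
\]
which are smooth on $[0,\varepsilon_{\ast}]\times\mathbb{R}$ and, by the mean-zero conditions, $2\pi$-periodic in $t$.

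Finally, verify the pointwise derivative bounds using $\alpha(s)=O(s)$ and the smoothness of $\alpha$ up to the endpoint. Taylor expansion of $\cos(\alpha(s)\sin t)$ and $\sin(\alpha(s)\sin t)$ in powers of $\alpha(s)$ yields $\partial_{t}\Gamma_{1}=O(s^{2})$ and $\partial_{t}\Gamma_{2}=O(s)$; additional $\partial_{t}$-derivatives only contribute bounded trigonometric factors, giving the claimed bounds on $\|\partial_{t}^{i}\Gamma_{j}\|_{0}$. Each $\partial_{s}$ either differentiates $\sqrt{1+s^{2}}$, producing a factor $O(s)$; differentiates $\alpha(s)$, producing a bounded $\alpha^{(k)}(s)$; or acts on the inner trigonometric arguments, producing $\alpha(s)=O(s)$ inside $\sin$ or $\cos$. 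Organizing these by the chain rule yields $\partial_{s}\partial_{t}^{i}\Gamma_{1}=O(s)$, $\partial_{s}\partial_{t}^{i}\Gamma_{2}=O(1)$, and $\partial_{s}^{2}\partial_{t}^{i}\Gamma=O(1)$ uniformly in $t$. The main obstacle is the careful bookkeeping in this last step: one must track each chain-rule contribution to confirm the asserted power of $s$, particularly the single factor of $s$ in $\partial_{s}\partial_{t}^{i}\Gamma_{1}$, which is protected by the near-resonant cancellation $\alpha(s)^{2}\approx 2s^{2}$ encoded in the implicit equation.
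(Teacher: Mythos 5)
Your construction is exactly the one the paper relies on: the lemma is cited from Proposition~3.5 of \cite{DIS2018}, and the paper's remark immediately after Lemma~\ref{lem09} records the same explicit formula (from Lemma~2 of \cite{CDS2012}), with your $\alpha(s)$ equal to $\sqrt{f(s)}$ and your Bessel identity $\sqrt{1+s^{2}}\,J_{0}(\alpha)=1$ equivalent to the paper's $F(s,r)=\dashint_{0}^{2\pi}\cos(r^{1/2}\sin t)\,dt-(1+s^{2})^{-1/2}=0$. The one place where your wording does not hold up literally is the application of the implicit function theorem: you cannot ``factor out $\alpha$ from $F$,'' since $F(s,0)=\sqrt{1+s^{2}}-1\neq0$ for $s>0$; the correct device (used in the paper's remark) is the substitution $r=\alpha^{2}$, under which $r\mapsto J_{0}(\sqrt{r})$ is analytic with nonzero $r$-derivative at $0$, giving a smooth $r=f(s)=2s^{2}+O(s^{4})$, and then $\alpha(s)=\sqrt{f(s)}=s\sqrt{f(s)/s^{2}}$ is smooth on $[0,\varepsilon_{\ast}]$ because $f(s)/s^{2}$ is smooth and bounded away from zero. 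Also, your closing remark attributes the bound $\partial_{s}\partial_{t}^{i}\Gamma_{1}=O(s)$ to a near-resonant cancellation $\alpha(s)^{2}\approx 2s^{2}$, but that bound only needs $\alpha=O(s)$ and $\alpha'=O(1)$; no cancellation between terms is involved there.
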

\begin{remark}
From Lemma 2 in \cite{CDS2012}, we see that $\Gamma=(\Gamma_{1},\Gamma_{2})$ is actually given by 
\begin{align*}
\begin{cases}	
\Gamma_{1}(s,t)=\int_{0}^{t}\big(\sqrt{1+s^{2}}\cos\big(\sqrt{f(s)}\sin \tau\big)-1\big)d\tau,\\
\Gamma_{2}(s,t)=\int_{0}^{t}\sqrt{1+s^{2}}\sin\big(\sqrt{f(s)}\sin \tau\big)d\tau,
\end{cases}	
\end{align*}	
where the implicit function $r=f(s)$ satisfies the following equation
\begin{align*}
F(s,r)=\dashint_{0}^{2\pi}\cos(r^{1/2}\sin t)dt-(1+s^{2})^{-1/2}=0.	
\end{align*}

\end{remark}

From Lemmas \ref{lem001} and \ref{lem006}, we see
\begin{align*}
\mathcal{R}=\sum^{n_{\ast}}_{i=n+1}L_{i}(\mathcal{R})\xi_{i}\otimes\xi_{i},
\end{align*}	  
where
\begin{align}\label{AQ18}
\|L_{i}(\mathcal{R})\|_{0}\leq C\|\mathcal{R}\|_{0}\leq C\Lambda^{-1/J}.
\end{align}	
Denote by $b_{i}$, $i=n+1,...,n_{\ast}$ the updated amplitudes as follows:
\begin{align}\label{AQ908}
b_{i}=\sqrt{a_{i}^{2}-L_{i}(\mathcal{R})}.	
\end{align}	 
From Lemma \ref{lem001} and \eqref{AQ18}, we have
\begin{align*}
a_{i}^{2}-L_{i}(\mathcal{R})\geq\sigma_{\ast}-C\Lambda^{-1/J}\geq\frac{\sigma_{\ast}}{2},\quad \text{as the parameter $a$ is sufficiently large},
\end{align*}	
which shows that $b_{i}$, $i=n+1,...,n_{\ast}$ are well-defined. Moreover, we have
\begin{align}\label{AQ19}
\|b_{i}\|_{k}\leq C(1+\Lambda^{-1/J}\lambda_{m,n}^{k}),\quad k\geq 0.	
\end{align}	

In the following we introduce the required perturbations by distinguishing between the cases of odd and even dimensions. 
\subsubsection{The odd-dimensional case}
Utilizing Lemma \ref{lem012}, we see that for every $j=1,...,\frac{n-1}{2}$, $u_{m,n+j-1}(\Omega_{m})$ has $n$ mutually orthogonal normal vectors $\{\tilde{\eta}_{n+j-1}^{(k)}\}_{k=nj+1}^{n(j+1)}$. For $k=nj+1,...,n(j+1)$, write 
\begin{align*}
\tilde{\zeta}_{n+j-1}^{(k)}=\nabla u_{m,n+j-1}(u_{m,n+j-1}^{\sharp}e)^{-1}\xi_{k},\quad \zeta_{n+j-1}^{(k)}=\frac{\tilde{\zeta}_{n+j-1}^{(k)}}{|\tilde{\zeta}^{(k)}_{n+j-1}|^{2}},	 
\end{align*}
and
\begin{align*}
\tilde{b}_{k}=\delta_{m+1}^{1/2}|\tilde{\zeta}_{n+j-1}^{(k)}|b_{k},\;\,\eta_{n+j-1}^{(k)}=\frac{\tilde{\eta}_{n+j-1}^{(k)}}{|\tilde{\eta}_{n+j-1}^{(k)}||\tilde{\zeta}_{n+j-1}^{(k)}|},
\end{align*}	
and, for $i=1,2,$
\begin{align*}
\begin{cases}	
\Gamma_{ik}=\Gamma_{i}(\tilde{b}_{k},\lambda_{m,n+j}x\cdot\xi_{k}),\;\,\partial_{s}\Gamma_{ik}=\partial_{s}\Gamma_{i}(\tilde{b}_{k},\lambda_{m,n+j}x\cdot\xi_{k}),\\
\partial_{t}\Gamma_{ik}=\partial_{t}\Gamma_{i}(\tilde{b}_{k},\lambda_{m,n+j}x\cdot\xi_{k}).
\end{cases}
\end{align*}	
Then for $j=1,...,\frac{n-1}{2}$, define
\begin{align}\label{W01}
u_{m,n+j}=u_{m,n+j-1}+\lambda_{m,n+j}^{-1}\sum^{n(j+1)}_{k=nj+1}\big(\Gamma_{1k}\zeta_{n+j-1}^{(k)}+\Gamma_{2k}\eta_{n+j-1}^{(k)}\big).	
\end{align}
Remark that the use of $n$ corrugation-type perturbations in \eqref{W01} corresponds to the dimension of the normal bundle, thereby fully employing the available codimension. This idea was already observed by K\"{a}llen \cite{K1978} in the context of Nash spirals rather than Kuiper corrugations. A direct computation gives that
\begin{align}\label{Q03}
	\|u_{m,\frac{3n-1}{2}}-u_{m,n}\|_{k}\leq C_{3}(n)\delta_ {m+1}^{1/2}\lambda_{m,n+1}^{k-1},\quad k=0,1.
\end{align}
By the same argument as in \eqref{AQ09}, we have
\begin{align}\label{AQ23}
\|u_{m,n+j}\|_{k}\leq C\big(1+\delta_{m+1}^{1/2}\lambda_{m,n+j}^{k-1}\big),\quad 1\leq j\leq \frac{n-1}{2},\;k\geq0.
\end{align}	

The main result of this subsection is now listed as follows.
\begin{lemma}\label{lem007}
Let $n\geq3$ be an odd number. Then we have
\begin{align*}
	u_{m,\frac{3n-1}{2}}^{\sharp}e=&u_{m,n}^{\sharp}e+\delta_{m+1}\sum^{n_{\ast}}_{i=n+1}b_{i}^{2}\xi_{i}\otimes\xi_{i}+O(1)\delta_{m+1}(\Lambda^{-1}+\delta_{m+1}),
\end{align*}		
where $b_{i}$, $i=n+1,...,n_{\ast}$ are defined by \eqref{AQ908}.		
\end{lemma}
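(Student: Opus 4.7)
The plan is to establish the identity step-by-step for $j=1,\ldots,(n-1)/2$ and then telescope. At each such step, all $n$ Kuiper-type corrugations along $\xi_{nj+1},\ldots,\xi_{n(j+1)}$ are added simultaneously using the $n$ mutually orthogonal normal vectors to $u_{m,n+j-1}(\Omega_m)$ provided by Lemma \ref{lem012}; this structural use of the full normal bundle is what the codimension-$n$ setting exactly permits, and it is why $n$ indices can be absorbed per step rather than just one.

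First I would differentiate the ansatz \eqref{W01} and decompose $\nabla u_{m,n+j}=A+B+E_j$, where $A=\nabla u_{m,n+j-1}$, where $B=\sum_{k}\bigl(\partial_t\Gamma_{1k}\zeta_{n+j-1}^{(k)}+\partial_t\Gamma_{2k}\eta_{n+j-1}^{(k)}\bigr)\otimes\xi_k$ is the leading oscillatory part, and $E_j$ collects the $\lambda_{m,n+j}^{-1}$-prefactor terms arising from $\nabla\zeta_{n+j-1}^{(k)}$, $\nabla\eta_{n+j-1}^{(k)}$, and $\nabla\tilde b_k$. The key algebraic identities from the definitions of $\zeta_{n+j-1}^{(k)}$, $\eta_{n+j-1}^{(k)}$, $\tilde b_k$ are
\begin{align*}
A^t\eta_{n+j-1}^{(k)}=0,\quad A^t\zeta_{n+j-1}^{(k)}=\xi_k/|\tilde\zeta_{n+j-1}^{(k)}|^2,\quad |\zeta_{n+j-1}^{(k)}|^2=|\eta_{n+j-1}^{(k)}|^2=|\tilde\zeta_{n+j-1}^{(k)}|^{-2},
\end{align*}
combined with mutual orthogonality of the $\tilde\eta_{n+j-1}^{(k)}$. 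These force $A^tB$ to be purely diagonal in $k$, kill every $\eta\cdot\eta$ and $\zeta\cdot\eta$ cross piece of $B^tB$, and leave only $\zeta\cdot\zeta$ cross pieces carrying the small prefactor $\partial_t\Gamma_{1k}\partial_t\Gamma_{1k'}=O(\tilde b_k^2\tilde b_{k'}^2)=O(\delta_{m+1}^2)$ coming from Lemma \ref{lem09}.

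For the diagonal ($k=k'$) contribution, combining $2\,\mathrm{sym}(A^tB)$ and the diagonal of $B^tB$ gives
\begin{align*}
\frac{1}{|\tilde\zeta_{n+j-1}^{(k)}|^2}\bigl[2\partial_t\Gamma_{1k}+(\partial_t\Gamma_{1k})^2+(\partial_t\Gamma_{2k})^2\bigr]\xi_k\otimes\xi_k=\delta_{m+1}\,b_k^2\,\xi_k\otimes\xi_k,
\end{align*}
using the Kuiper corrugation identity $(1+\partial_t\Gamma_1)^2+(\partial_t\Gamma_2)^2=1+s^2$ from Lemma \ref{lem09} and the definition $\tilde b_k=\delta_{m+1}^{1/2}|\tilde\zeta_{n+j-1}^{(k)}|b_k$. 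This is exactly the target contribution at step $j$. The residual terms, namely $E_j$ itself and the cross interactions $A^tE_j$, $B^tE_j$, $E_j^tE_j$, are controlled with the $C^0$ bounds on $\partial_s\Gamma_{ik},\partial_t\Gamma_{ik}$ from Lemma \ref{lem09}, the bounds \eqref{AQ19} and \eqref{AQ23} on $b_k$ and $u_{m,n+j-1}$, and the normal-frame estimates of Lemma \ref{lem012}; each derivative of $\zeta_{n+j-1}^{(k)},\eta_{n+j-1}^{(k)},\tilde b_k$ costs at most $\lambda_{m,n+j-1}=\Lambda^{-1}\lambda_{m,n+j}$, yielding an overall $O(\delta_{m+1}\Lambda^{-1})$.

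Telescoping over $j=1,\ldots,(n-1)/2$, a fixed finite number of steps depending only on $n$, produces the asserted decomposition of $u_{m,\frac{3n-1}{2}}^{\sharp}e-u_{m,n}^{\sharp}e$, since the indices $k=nj+1,\ldots,n(j+1)$ across the different steps exhaust $\{n+1,\ldots,n_*\}$. The main obstacle is the careful bookkeeping required to verify that both the $O(\delta_{m+1}^2)$ $\zeta\cdot\zeta$ cross terms and the $O(\delta_{m+1}\Lambda^{-1})$ derivative-of-frame remainders, accumulated over all $j$, fit into the single error budget $O(1)\delta_{m+1}(\Lambda^{-1}+\delta_{m+1})$ stated in the lemma.
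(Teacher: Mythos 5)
Your proposal follows essentially the same route as the paper's proof: differentiate the ansatz \eqref{W01} into a leading oscillatory part plus a $\lambda_{m,n+j}^{-1}$-prefactor remainder (the paper's split into $\mathcal{A}_k$, $\mathcal{B}_k$, $\mathcal{C}_k$), use the frame identities $\nabla u_{m,n+j-1}^t\eta_{n+j-1}^{(k)}=0$ and $\nabla u_{m,n+j-1}^t\zeta_{n+j-1}^{(k)}=\xi_k/|\tilde\zeta_{n+j-1}^{(k)}|^2$ together with the mutual orthogonality of the normal frame to isolate the diagonal $\xi_k\otimes\xi_k$ contributions, invoke the Kuiper identity $(1+\partial_t\Gamma_1)^2+(\partial_t\Gamma_2)^2=1+s^2$ with $s=\tilde b_k$ to turn the diagonal into $\delta_{m+1}b_k^2\xi_k\otimes\xi_k$, bound the $\zeta\cdot\zeta$ off-diagonal pieces by $O(\delta_{m+1}^2)$ via $\partial_t\Gamma_{1k}=O(\tilde b_k^2)$, control the remainder pieces by $O(\delta_{m+1}\Lambda^{-1})$ using $\lambda_{m,n+j-1}=\Lambda^{-1}\lambda_{m,n+j}$, and telescope over the $(n-1)/2$ steps. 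This is precisely the argument in the paper; the only differences are cosmetic (you group $\mathcal{B}_k+\mathcal{C}_k$ into a single $E_j$ where the paper bounds them separately).
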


\begin{proof}
For $j=1,...,\frac{n-1}{2}$, we have
\begin{align*}
\nabla	u_{m,n+j}&=\nabla u_{m,n+j-1}+\sum^{n(j+1)}_{k=nj+1}\big(\partial_{t}\Gamma_{1k}\zeta_{n+j-1}^{(k)}\otimes\xi_{k}+\partial_{t}\Gamma_{2k}\eta_{n+j-1}^{(k)}\otimes\xi_{k}\big)\notag\\
&\quad+\sum^{n(j+1)}_{k=nj+1}\lambda_{m,n+j}^{-1}\big(\Gamma_{1k}\nabla\zeta_{n+j-1}^{(k)}+\Gamma_{2k}\nabla\eta_{n+j-1}^{(k)}\big)\notag\\
&\quad+\sum^{n(j+1)}_{k=nj+1}\lambda_{m,n+j}^{-1}\big(\partial_{s}\Gamma_{1k}\zeta_{n+j-1}^{(k)}\otimes\nabla\tilde{b}_{k}+\partial_{s}\Gamma_{2k}\eta_{n+j-1}^{(k)}\otimes\nabla\tilde{b}_{k}\big)\notag\\
&=:\nabla u_{m,n+j-1}+\sum^{n(j+1)}_{k=nj+1}(\mathcal{A}_{k}+\mathcal{B}_{k}+\mathcal{C}_{k}).
\end{align*}	
This leads to that
\begin{align}\label{AQ28}
 u_{m,n+j}^{\sharp}e=&u_{m,n+j-1}^{\sharp}e+\sum^{n(j+1)}_{k=nj+1}2\mathrm{sym}\big(\nabla u_{m,n+j-1}^{t}(\mathcal{A}_{k}+\mathcal{B}_{k}+\mathcal{C}_{k})\big)\notag\\
	&+\sum^{n(j+1)}_{k,l=nj+1}(\mathcal{A}_{k}^{t}+\mathcal{B}_{k}^{t}+\mathcal{C}_{k}^{t})(\mathcal{A}_{l}+\mathcal{B}_{l}+\mathcal{C}_{l}).
\end{align}	
Since
\begin{align*}
\nabla u_{m,n+j-1}^{t}\eta_{n+j-1}^{(k)}=0,\quad \nabla u_{m,n+j-1}^{t}\zeta_{n+j-1}^{(k)}=\frac{\xi_{k}}{|\tilde{\zeta}_{n+j-1}^{(k)}|},
\end{align*}		
we deduce
\begin{align*}
\nabla u_{m,n+j-1}^{t}\mathcal{A}_{k}=&\partial_{t}\Gamma_{1k}\nabla u_{m,n+j-1}^{t}\zeta_{n+j-1}^{(k)}\otimes\xi_{k}+\partial_{t}\Gamma_{2k}\nabla u_{m,n+j-1}^{t}\eta_{n+j-1}^{(k)}\otimes\xi_{k}	\notag\\
=&\frac{\partial_{t}\Gamma_{1k}}{|\tilde{\zeta}_{n+j-1}^{(k)}|}\xi_{k}\otimes\xi_{k},
\end{align*}		
which reads that 
$$2\mathrm{sym}(\nabla u_{m,n+j-1}^{t}\mathcal{A}_{k})=\frac{2\partial_{t}\Gamma_{1k}}{|\tilde{\zeta}_{n+j-1}^{(k)}|}\xi_{k}\otimes\xi_{k}.$$	
Making use of Lemma \ref{lem09} and \eqref{AQ19}--\eqref{AQ23}, we deduce
\begin{align*}
\|\nabla u_{m,n+j-1}^{t}\mathcal{B}_{k}\|_{0}=&\lambda_{m,n+j}^{-1}\big\|\Gamma_{1k}\nabla u_{m,n+j-1}^{t}\nabla\zeta_{n+j-1}^{(k)}+\Gamma_{2k}\nabla u_{m,n+j-1}^{t}\nabla\eta_{n+j-1}^{(k)}\big\|_{0}\notag\\
\leq&C\lambda_{m,n+j}^{-1}\|\tilde{b}_{k}\|_{0}\|u_{m,n+j-1}\|_{2}\leq C\delta_{m+1}\lambda^{-1}_{m,n+j}\lambda_{m,n+j-1},
\end{align*}	
which yields that
\begin{align*}
2\mathrm{sym}(\nabla u_{m,n+j-1}^{t}\mathcal{B}_{k})=O(1)\delta_{m+1}\Lambda^{-1}.
\end{align*}	
Similarly, we obtain
\begin{align*}
&\nabla u_{m,n+j-1}^{t}\mathcal{C}_{k}\notag\\
&=\lambda_{m,n+j}^{-1}\big(\partial_{s}\Gamma_{1k}\nabla u_{m,n+j-1}^{t}\zeta_{n+j-1}^{(k)}\otimes\nabla\tilde{b}_{k}+\partial_{s}\Gamma_{2k}\nabla u_{m,n+j-1}^{t}\eta_{n+j-1}^{(k)}\otimes\nabla\tilde{b}_{k}\big)\notag\\
&=\frac{\partial_{s}\Gamma_{1k}\xi_{k}\otimes\nabla\tilde{b}_{k}}{\lambda_{m,n+j}|\tilde{\zeta}_{n+j-1}^{(k)}|}=O(1)\frac{\|\tilde{b}_{k}\|_{0}\|\nabla\tilde{b}_{k}\|_{0}}{\lambda_{m,n+j}}=O(1)\frac{\delta_{m+1}\lambda_{m,n}}{\Lambda^{1/J}\lambda_{m,n+j}},
\end{align*}	
which shows that
\begin{align*}
2\mathrm{sym}(\nabla u_{m,n+j-1}^{t}\mathcal{C}_{k})=O(1)\Lambda^{-j-1/J}\delta_{m+1}.	
\end{align*}	
A consequence of these above facts gives that
\begin{align*}
&\sum^{n(j+1)}_{k=nj+1}2\mathrm{sym}\big(\nabla u_{m,n+j-1}^{t}(\mathcal{A}_{k}+\mathcal{B}_{k}+\mathcal{C}_{k})\big)\notag\\	
&=\sum^{n(j+1)}_{k=nj+1}\frac{2\partial_{t}\Gamma_{1k}}{|\tilde{\zeta}_{n+j-1}^{(k)}|}\xi_{k}\otimes\xi_{k}+O(1)\delta_{m+1}\Lambda^{-1}.
\end{align*}		
	
We now continue by computing the remaining term	in \eqref{AQ28}. First, we further split it into two parts as follows:
\begin{align*}
\begin{cases}
I_{1}=\sum^{n(j+1)}_{k=nj+1}(\mathcal{A}_{k}^{t}+\mathcal{B}_{k}^{t}+\mathcal{C}_{k}^{t})(\mathcal{A}_{k}+\mathcal{B}_{k}+\mathcal{C}_{k}),\\
I_{2}=\sum^{n(j+1)}_{k,l=nj+1,k\neq l}(\mathcal{A}_{k}^{t}+\mathcal{B}_{k}^{t}+\mathcal{C}_{k}^{t})(\mathcal{A}_{l}+\mathcal{B}_{l}+\mathcal{C}_{l}).
\end{cases}	
\end{align*}	
Since $\zeta_{n+j-1}^{(k)}\cdot\eta_{n+j-1}^{(k)}=0$, then
\begin{align*}
\mathcal{A}^{t}_{k}\mathcal{A}_{k}=&\big((\partial_{t}\Gamma_{1k})^{2}|\zeta_{n+j-1}^{(k)}|^{2}+(\partial_{t}\Gamma_{2k})^{2}|\eta_{n+j-1}^{(k)}|^{2}\big)\xi_{k}\otimes\xi_{k}\notag\\
=&\frac{(\partial_{t}\Gamma_{1k})^{2}+(\partial_{t}\Gamma_{2k})^{2}}{|\tilde{\zeta}_{n+j-1}^{(k)}|^{2}}\xi_{k}\otimes\xi_{k}.	
\end{align*}	
In light of Lemma \ref{lem09} and \eqref{AQ19}--\eqref{AQ23}, it follows from a direct computation that
\begin{align}\label{AQ29}
\begin{cases}		
\mathcal{A}^{t}_{k}\mathcal{B}_{k}=O(1)\lambda_{m,n+j}^{-1}\|\tilde{b}_{k}\|_{0}^{2}\|u_{m,n+j-1}\|_{2}=O(1)\Lambda^{-1}\delta_{m+1}^{3/2},\\
\mathcal{A}^{t}_{k}\mathcal{C}_{k}=O(1)\lambda_{m,n+j}^{-1}\|\tilde{b}_{k}\|_{0}\|\nabla\tilde{b}_{k}\|_{0}=O(1)\Lambda^{-1/J-j}\delta_{m+1},\\
\mathcal{B}^{t}_{k}\mathcal{B}_{k}=O(1)\lambda_{m,n+j}^{-2}(\|\tilde{b}_{k}\|_{0}\|u_{m,n+j-1}\|_{2})^{2}=O(1)\Lambda^{-2}\delta_{m+1}^{2},\\
\mathcal{B}^{t}_{k}\mathcal{C}_{k}=O(1)\lambda_{m,n+j}^{-2}\|\tilde{b}_{k}\|_{0}\|\nabla\tilde{b}_{k}\|_{0}\|u_{m,n+j-1}\|_{2}=O(1)\Lambda^{-1/J-j-1}\delta_{m+1}^{3/2},\\
\mathcal{C}^{t}_{k}\mathcal{C}_{k}=O(1)\lambda_{m,n+j}^{-2}\|\nabla\tilde{b}_{k}\|_{0}^{2}=O(1)\Lambda^{-2/J-2j}\delta_{m+1}.
\end{cases}
\end{align}		
Therefore, we have
\begin{align*}
I_{1}=\sum^{n(j+1)}_{k=nj+1}\frac{(\partial_{t}\Gamma_{1k})^{2}+(\partial_{t}\Gamma_{2k})^{2}}{|\tilde{\zeta}_{n+j-1}^{(k)}|^{2}}\xi_{k}\otimes\xi_{k}+O(1)\Lambda^{-1-1/J}\delta_{m+1}.
\end{align*}		
	
With regard to the second term $I_{2}$, using the fact of $\eta_{n+j-1}^{(k)}\cdot\zeta_{n+j-1}^{(l)}=\eta_{n+j-1}^{(k)}\cdot\eta_{n+j-1}^{(l)}=0$ with $k,l=nj+1,\,k\neq l$, we have from Lemma \ref{lem09} and \eqref{AQ19}--\eqref{AQ23} again that	
\begin{align*}
\mathcal{A}_{k}^{t}\mathcal{A}_{l}=	\partial_{t}\Gamma_{1k}\partial_{t}\Gamma_{1l}\zeta_{n+j-1}^{(k)}\cdot\zeta_{n+j-1}^{(l)}\xi_{k}\otimes\xi_{k}=O(1)\|\tilde{b}_{k}\|_{0}^{4}=O(1)\delta_{m+1}^{2}.
\end{align*}		
In exactly the same way as in \eqref{AQ29}, we obtain that for $k,l=nj+1,\,k\neq l$,
\begin{align*}
	\begin{cases}		
		\mathcal{A}^{t}_{k}\mathcal{B}_{l}=O(1)\Lambda^{-1}\delta_{m+1}^{3/2},\quad\mathcal{A}^{t}_{k}\mathcal{C}_{l}=O(1)\Lambda^{-1/J-j}\delta_{m+1},\\
		\mathcal{B}^{t}_{k}\mathcal{B}_{l}=O(1)\Lambda^{-2}\delta_{m+1}^{2},\quad\mathcal{B}^{t}_{k}\mathcal{C}_{l}=O(1)\Lambda^{-1/J-j-1}\delta_{m+1}^{3/2},\\
		\mathcal{C}^{t}_{k}\mathcal{C}_{l}=O(1)\Lambda^{-2/J-2j}\delta_{m+1}.
	\end{cases}
\end{align*}			
Hence we have
\begin{align*}
I_{2}=O(1)\delta_{m+1}(\Lambda^{-1-1/J}+\delta_{m+1}).
\end{align*}		
Consequently, combining these above facts, we deduce from Lemma \ref{lem09} that	
\begin{align*}
u_{m,n+j}^{\sharp}e=&u_{m,n+j-1}^{\sharp}e+\sum^{n(j+1)}_{k=nj+1}\frac{2\partial_{t}\Gamma_{1k}+(\partial_{t}\Gamma_{1k})^{2}+(\partial_{t}\Gamma_{2k})^{2}}{|\tilde{\zeta}_{n+j-1}^{(k)}|^{2}}\xi_{k}\otimes\xi_{k}\notag\\
&+O(1)\delta_{m+1}(\Lambda^{-1}+\delta_{m+1})\notag\\
=&u_{m,n+j-1}^{\sharp}e+\sum^{n(j+1)}_{k=nj+1}\delta_{m+1}b_{k}^{2}\xi_{k}\otimes\xi_{k}+O(1)\delta_{m+1}(\Lambda^{-1}+\delta_{m+1}),
\end{align*}	
which implies that Lemma \ref{lem007} holds.	
		
\end{proof}	

\subsubsection{The even-dimensional case}\label{SU03}

In the following we adopt $\frac{n}{2}$ Nash-spirals in the first step and then Kuiper-corrugations for $\frac{n-2}{2}$ times to cancel $\frac{n}{2}$ and $\frac{n(n-2)}{2}$ primitive metrics in \eqref{AQ06}, respectively.

In view of Lemma \ref{lem012}, we know that for any $j=1,...,\frac{n}{2}$, $u_{m,n+j-1}(\Omega_{m})$ possesses $n$ mutually orthogonal normal vectors, respectively, written as $\{\zeta_{n}^{(k)},\eta_{n}^{(k)}\}_{k=1}^{\frac{n}{2}}$ and $\{\tilde{\eta}_{n+j-1}^{(k)}\}_{k=n(j-2)+\frac{n}{2}+1}^{n(j-1)+\frac{n}{2}}$, $j=2,...,\frac{n}{2}$. We now divide into two cases and introduce the required Nash-spirals and Kuiper-corrugations as follows. On one hand, for $j=1,\,k=n+1,...,\frac{3n}{2}$, define
\begin{align*}
	u_{m,n+1}=u_{m,n}+\lambda_{m,n+1}^{-1}\sum_{k=n+1}^{\frac{3n}{2}}\tilde{b}_{1k}\big(\gamma_{k,1}\zeta_{n}^{(k)}+\gamma_{k,2}\eta_{n}^{(k)}\big),	
\end{align*}	
where
\begin{align*}
\tilde{b}_{1k}=\delta_{m+1}^{1/2}b_{k},\quad	\gamma_{k,1}=\sin(\lambda_{m,n+1}x\cdot\xi_{k}), \quad\gamma_{k,2}=\cos(\lambda_{m,n+1}x\cdot\xi_{k}).
\end{align*}

On the other hand, for $j=2,...,\frac{n}{2},\,k=n(j-2)+\frac{3n}{2}+1,...,n(j-1)+\frac{3n}{2}$, denote
\begin{align*}
	\tilde{\zeta}_{n+j-1}^{(k)}=\nabla u_{m,n+j-1}(u_{m,n+j-1}^{\sharp}e)^{-1}\xi_{k},\quad \zeta_{n+j-1}^{(k)}=\frac{\tilde{\zeta}_{n+j-1}^{(k)}}{|\tilde{\zeta}^{(k)}_{n+j-1}|^{2}},
\end{align*}
and, for $i=1,2,$
\begin{align*}
	\tilde{b}_{2k}=\delta_{m+1}^{1/2}|\tilde{\zeta}_{n+j-1}^{(k)}|b_{k},	\;	\eta_{n+j-1}^{(k)}=\frac{\tilde{\eta}_{n+j-1}^{(k)}}{|\tilde{\eta}_{n+j-1}^{(k)}||\tilde{\zeta}_{n+j-k}^{(k)}|},\;
	\Gamma_{ik}=\Gamma_{i}(\tilde{b}_{2k},\lambda_{m,n+j}x\cdot\xi_{k}).
\end{align*}	
Then for $j=2,...,\frac{n}{2}$, define
\begin{align}\label{W02}
	u_{m,n+j}=u_{m,n+j-1}+\lambda_{m,n+j}^{-1}\sum^{n(j-1)+\frac{3n}{2}}_{k=n(j-2)+\frac{3n}{2}+1}\big(\Gamma_{1k}\zeta_{n+j-1}^{(k)}+\Gamma_{2k}\eta_{n+j-1}^{(k)}\big).	
\end{align}	
Hence we have
\begin{align}\label{Q05}
	\|u_{m,\frac{3n}{2}}-u_{m,n}\|_{k}\leq C_{4}(n)\delta_ {m+1}^{1/2}\lambda_{m,n+1}^{k-1},\quad k=0,1.
\end{align}
Similar to \eqref{AQ09}, for $j=1,...,\frac{n}{2}$, we have
\begin{align}\label{AQ369}
	\|u_{m,n+j}\|_{k}\leq C\big(1+\delta_{m+1}^{1/2}\lambda_{m,n+j}^{k-1}\big),\quad k\geq0. 
\end{align}	
In light of \eqref{AQ369} and applying the proof of Lemma \ref{lem007} with a slight modification, we directly have the following result.
\begin{corollary}\label{coro033}
Let $n\geq4$ be an even number. Then there holds
	\begin{align*}
		u_{m,\frac{3n}{2}}^{\sharp}e=&u_{m,n}^{\sharp}e+\delta_{m+1}\sum^{n_{\ast}}_{k=n+1}b_{k}^{2}\xi_{k}\otimes\xi_{k}+O(1)\delta_{m+1}(\Lambda^{-1}+\delta_{m+1}),
	\end{align*}		
	where $b_{k}$, $k=n+1,...,n_{\ast}$ are given by \eqref{AQ908}.	
\end{corollary}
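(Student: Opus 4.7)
The plan is to handle the first step ($j=1$) and the subsequent steps ($j=2,\dots,n/2$) separately, then telescope. For $j=1$ the perturbation $u_{m,n+1}-u_{m,n}$ uses Nash-type spirals rather than Kuiper corrugations, so I cannot simply copy Lemma \ref{lem007}. However, this step is morally a direct analogue of the calculation in Section \ref{sub01}: the amplitudes $\tilde{b}_{1k}=\delta_{m+1}^{1/2}b_{k}$ play the role of $\delta_{m+1}^{1/2}a_{i}$, and the orthonormal normal pairs $\{\zeta_{n}^{(k)},\eta_{n}^{(k)}\}_{k=n+1}^{3n/2}$ replace the pair $(\zeta_{i-1},\eta_{i-1})$. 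I would differentiate $u_{m,n+1}$, split $\nabla u_{m,n+1}^{t}\nabla u_{m,n+1}-\nabla u_{m,n}^{t}\nabla u_{m,n}$ into $2\mathrm{sym}(\nabla u_{m,n}^{t}(\mathcal{A}+\mathcal{B}))+(\mathcal{A}+\mathcal{B})^{t}(\mathcal{A}+\mathcal{B})$ with $\mathcal{A}$ collecting the high-frequency $\partial_t$ terms and $\mathcal{B}$ the low-frequency $\lambda_{m,n+1}^{-1}\nabla(\cdot)$ corrections. The orthogonality $\nabla u_{m,n}^{t}\zeta_n^{(k)}=\nabla u_{m,n}^{t}\eta_n^{(k)}=0$ kills the leading symmetric-gradient contribution; $\mathcal{A}^{t}\mathcal{A}$ gives $\sum_{k}\tilde{b}_{1k}^{2}\xi_{k}\otimes\xi_{k}=\delta_{m+1}\sum_{k=n+1}^{3n/2}b_{k}^{2}\xi_{k}\otimes\xi_{k}$, and the cross terms in $\mathcal{A}^{t}\mathcal{B},\mathcal{B}^{t}\mathcal{B}$ as well as the $k\neq l$ terms are controlled by $\|\tilde{b}_{1k}\|_{0}^{2}\leq C\delta_{m+1}$ and $\lambda_{m,n+1}^{-1}\|u_{m,n}\|_{2}\leq C\Lambda^{-1}$, giving an error $O(1)\delta_{m+1}(\Lambda^{-1}+\delta_{m+1})$.

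For $j=2,\dots,n/2$ the construction in \eqref{W02} is literally the Kuiper-corrugation ansatz used in Lemma \ref{lem007}, only with the indexing $k=n(j-2)+\tfrac{3n}{2}+1,\dots,n(j-1)+\tfrac{3n}{2}$ and still $n$ orthogonal normal directions per stage because the codimension is $n$. Consequently the entire computation preceding \eqref{AQ28} through the end of the proof of Lemma \ref{lem007} transcribes verbatim, with the identity $(1+\partial_t\Gamma_{1k})^{2}+(\partial_t\Gamma_{2k})^{2}=1+\tilde{b}_{2k}^{2}$ producing the rank-one term $\delta_{m+1}b_{k}^{2}\xi_{k}\otimes\xi_{k}$ and the same $O(1)\delta_{m+1}(\Lambda^{-1}+\delta_{m+1})$ residual at each step. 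Summing over $j=2,\dots,n/2$ accounts for the indices $k=\tfrac{3n}{2}+1,\dots,n_{\ast}$ with one error contribution per stage, bounded uniformly by $O(1)\delta_{m+1}(\Lambda^{-1}+\delta_{m+1})$ because the number of stages is a dimensional constant.

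Combining the $j=1$ output with the telescoping over $j\geq2$ yields
\[
u_{m,3n/2}^{\sharp}e-u_{m,n}^{\sharp}e=\delta_{m+1}\sum_{k=n+1}^{n_{\ast}}b_{k}^{2}\xi_{k}\otimes\xi_{k}+O(1)\delta_{m+1}(\Lambda^{-1}+\delta_{m+1}),
\]
which is the desired identity. The only real subtlety—and what I expect to be the main obstacle—is the bookkeeping at $j=1$: since that step is genuinely Nash-type rather than Kuiper-type, one must verify that the Nash-spiral estimates (in particular the $\mathcal{B}^{t}\mathcal{B}$ term, where no derivative $\partial_s\Gamma$ appears and there is no analogue of the $\lambda_{m,i}^{-2}\nabla a_i\otimes\nabla a_i$ absorption) still produce the same $O(\Lambda^{-1})$ error; this is why \eqref{AQ908} is chosen so that $b_k$ already accounts for $L_k(\mathcal R)$, ensuring no residual low-frequency piece needs to be shifted into a further decomposition. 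Everything else is routine once \eqref{AQ369} and Lemma \ref{lem09} are in hand.
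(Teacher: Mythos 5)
Your proposal is correct and follows the paper's approach: the paper obtains Corollary \ref{coro033} by simply invoking the proof of Lemma \ref{lem007} with a slight modification, and you spell out exactly what that modification entails—treating the $j=1$ Nash-spiral step separately (where mutual orthogonality of the full normal frame $\{\zeta_n^{(k)},\eta_n^{(k)}\}$ eliminates the $\mathcal{A}_k^t\mathcal{A}_l$ cross-terms and \eqref{AQ908} already absorbs $L_k(\mathcal{R})$) and transcribing the Kuiper-corrugation computation of Lemma \ref{lem007} verbatim for $j\geq 2$ before telescoping. The subtlety you flag about $\mathcal{B}^t\mathcal{B}$ at $j=1$ resolves via \eqref{AQ19} and \eqref{AQ369}, which yield a bound of order $\delta_{m+1}\Lambda^{-2-2/J}$, comfortably within the stated $O(1)\delta_{m+1}(\Lambda^{-1}+\delta_{m+1})$ error without needing any $\lambda^{-2}\nabla a\otimes\nabla a$ absorption.
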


We are now ready to give the proof of Proposition \ref{pro01}. 
\begin{proof}[Proof of Proposition \ref{pro01}]
Pick 
\begin{align*}
u_{m+1}:=
\begin{cases}
u_{m,\frac{3n-1}{2}},&\text{if $n$ is odd,}\\
u_{m,\frac{3n}{2}},&\text{if $n$ is even,}
\end{cases}		
\end{align*}		
where $u_{m,\frac{3n-1}{2}}$ and $u_{m,\frac{3n}{2}}$ are, respectively, defined by \eqref{W01} and \eqref{W02}. Making use of \eqref{AQ23} and \eqref{AQ369}, we deduce
\begin{align*}
\|\nabla^{2}u_{m+1}\|_{0}\leq&C
\begin{cases}
\delta_{m+1}^{1/2}\lambda_{m,\frac{3n-1}{2}},&\text{if $n$ is odd,}\\
\delta_{m+1}^{1/2}\lambda_{m,\frac{3n}{2}},&\text{if $n$ is even}.		
\end{cases}
\end{align*}	
Recalling \eqref{W018}--\eqref{W86} and in order to make $\|\nabla^{2}u_{m+1}\|_{0}\leq \delta_{m+1}^{1/2}\lambda_{m+1},$ we need to guarantee that
\begin{align*}
\begin{cases}
C\delta^{1/2}_{m}\lambda_{m}a^{\alpha}\big(\frac{\delta_{m+1}a^{\alpha}}{\delta_{m+2}}\big)^{\frac{J(n-1)+2n}{2J}}\leq\delta_{m+1}^{1/2}\lambda_{m+1},&\text{if $n$ is odd,}\\
C\delta^{1/2}_{m}\lambda_{m}a^{\alpha}\big(\frac{\delta_{m+1}a^{\alpha}}{\delta_{m+2}}\big)^{\frac{n(J+2)}{2J}}\leq\delta_{m+1}^{1/2}\lambda_{m+1},&\text{if $n$ is even.}
\end{cases}	
\end{align*}	
Then it suffices to require that 

$(i)$ if $n$ is odd,
\begin{align}\label{Z08}
\begin{cases}
1-\vartheta/b+(2\vartheta(b-1)+b\alpha)(J(n-1)+2n)/(2J)<b-\vartheta,\\
\alpha(J(n+1)+2n)/(2J)<b(b-1)[1-\vartheta(n+2n/J-(b-1)/b)];
\end{cases}
\end{align}	

$(ii)$ if $n$ is even,
\begin{align}\label{Z18}
	\begin{cases}
		1-\vartheta/b+n(2\vartheta(b-1)+b\alpha)(J+2)/(2J)<b-\vartheta,\\
		\alpha(J(n+2)+2n)/(2J)<b(b-1)[1-\vartheta(n+1+2n/J-(b-1)/b)].
	\end{cases}
\end{align}	
For the purpose of ensuring that the facts in Remark \ref{REM01} and \eqref{Z08}--\eqref{Z18} hold, we fix the values of $b,\vartheta,\alpha,J$ in \eqref{W90} and \eqref{W901}. Indeed, on one hand, using \eqref{W90} and \eqref{W901}, we have from a direct computation that for any $n\geq3$,
\begin{align}\label{W006}
\theta<\frac{\vartheta}{b}<\frac{\theta+\varepsilon}{1+\beta},\quad\beta=
\begin{cases}
	\frac{J(n-1)(b-1)+2bn}{Jn},&\text{if $n$ is odd},\\
	\frac{Jn(b-1)+2bn}{J(n+1)},&\text{if $n$ is even},
\end{cases}			
\end{align}	
which implies that the facts in Remark \ref{REM01} and the first inequalities of \eqref{Z08}--\eqref{Z18} hold. On the other hand, we have from \eqref{W90} and \eqref{W901} again that 

$(1)$ if $n$ is odd, 
\begin{align*}
&1-\vartheta\Big(n+\frac{2n}{J}-\frac{b-1}{b}\Big)\geq 1-\vartheta\Big(n+\frac{(n-1)^{2}\varepsilon}{2b}\Big)\notag\\
&\geq1-(1-(n-1)\varepsilon)\Big(b+\frac{(n-1)^{2}\varepsilon}{2n}\Big)\geq\frac{(n-1)\varepsilon}{2n},
\end{align*}
which gives that
\begin{align*}
b(b-1)\bigg[1-\vartheta\Big(n+\frac{2n}{J}-\frac{b-1}{b}\Big)\bigg]>\frac{(n-1)^{2}\varepsilon^{2}}{4n};	
\end{align*}		

$(2)$ if $n$ is even,
 \begin{align*}
 	&1-\vartheta\Big(n+1+\frac{2n}{J}-\frac{b-1}{b}\Big)\geq 1-\vartheta\Big(n+1+\frac{(n-1)^{2}\varepsilon}{2b}\Big)\notag\\
 	&\geq1-(1-(n-1)\varepsilon)\Big(b+\frac{(n-1)^{2}\varepsilon}{2(n+1)}\Big)\geq\frac{\varepsilon(n-1)}{n+1},
 \end{align*}	
which reads that
\begin{align*}
b(b-1)\bigg[1-\vartheta\Big(n+1+\frac{2n}{J}-\frac{b-1}{b}\Big)\bigg]>\frac{(n-1)^{2}\varepsilon^{2}}{2(n+1)}.	
\end{align*}	 
If $0<\varepsilon<2/n^{2}$, we have $J\geq2n$. Then we obtain
\begin{align*}
\frac{Jk+2n}{2J}\leq\frac{k+1}{2},\quad k=n+1,n+2.
\end{align*}
So in order to let the second inequalities in \eqref{Z08}--\eqref{Z18} hold, it is sufficient to take $\alpha=\frac{\varepsilon^{2}}{10}$ such that
\begin{align*}
\alpha\leq\frac{(n-1)^{2}\varepsilon^{2}}{2n(n+2)},\quad\forall\,n\geq3.
\end{align*}	
This, together with \eqref{W006}, confirms the validity of the statements in Remark \ref{REM01} and \eqref{Z08}--\eqref{Z18}. In addition, in view of \eqref{W90} and \eqref{W901}, the inequality of $\delta_{m+1}^{1/2}\leq\Lambda^{-1}$ holds up to the constant $\delta_{\ast}^{1/2}$ provided that
\begin{align*}
-(n-1)^{2}\varepsilon^{3}-\frac{9n-11}{5(n+1)}\varepsilon^{2}+\frac{3n+1}{n+1}\varepsilon-\frac{2}{n+1}\leq0.	
\end{align*}	
This inequality is valid, if $0<\varepsilon<2/(3n+1)$. Set 
\begin{align}\label{AQ56}
	C_{\ast}:=
	\begin{cases}
		\max\{C_{1}(n),C_{2}(n),C_{3}(n)\},&\text{if $n$ is odd,}\\
		\max\{C_{1}(n),C_{2}(n),C_{4}(n)\},&\text{if $n$ is even},	
	\end{cases}	
\end{align}
where $C_{i}(n),i=1,2,3,4$ are given by \eqref{Q01}, \eqref{Q02}, \eqref{Q03} and \eqref{Q05}, respectively. Consequently, a combination of \eqref{Q01}, \eqref{Q02}, \eqref{Q03}--\eqref{AQ23}, \eqref{Q05}--\eqref{AQ369}, \eqref{AQ56}, Lemmas \ref{lem006} and \ref{lem007}, and Corollary \ref{coro033} shows that Proposition \ref{pro01} holds, as the parameter $a$ is taken large enough.

\end{proof}

\subsection{The proof of Theorem \ref{Main01}}
To complete the proof of Theorem \ref{Main01}, we see from Proposition \ref{pro01} that it remains to construct a short immersion $u_{0}\in C^{\infty}(\overline{\Omega}_{0},\mathbb{R}^{2n})$ subject to
\begin{align}\label{AQ65}
\begin{cases}
\|u_{0}-\bar{u}\|_{0}\leq2^{-1}\varepsilon,\quad\|u_{0}\|_{1}\leq\|\bar{u}\|_{1}+\mathcal{G}_{\ast},\quad\|\nabla(u_{0}-\bar{u})\|_{0}\leq\mathcal{K}_{\ast},\\
\|g_{0}-u^{\sharp}_{0}e\|_{0}\leq\sigma_{0}\delta_{1},\quad\|\nabla^{2} u_{0}\|_{0}\leq\delta_{0}^{1/2}\lambda_{0},
\end{cases}	
\end{align}
where $\mathcal{G}_{\ast},\mathcal{K}_{\ast}$ are given by \eqref{AQ68} and \eqref{AQ69}, respectively. For $\beta\in(0,1]$, define	
\begin{align*}
	\Omega^{\ast}_{\beta}=\{x\in\mathbb{R}^{n}:\mathrm{dist}(x,\Omega_{0})<\beta\},	
\end{align*}		
where $\Omega_{0}$ is defined by \eqref{W009}. In light of the classical extension arguments, we suppose without loss of generality that  $\bar{u}:(\Omega_{1}^{\ast},g)\hookrightarrow\mathbb{R}^{2n}$ is a short $ C^{1,\varepsilon}$ immersion. For a matrix $A\in\mathbb{R}^{n\times n}$, denote
\begin{align*}
\lambda_{\min}(A)=\min_{1\leq i\leq n}\lambda_{i}(A),\quad	\lambda_{\max}(A)=\max_{1\leq i\leq n}\lambda_{i}(A),
\end{align*}	
where $\{\lambda_{i}(A)\}_{i=1}^{n}$ are the eigenvalues of $A$. Note that
\begin{align*}
g-\bar{u}^{\sharp}e\geq\inf_{x\in\overline{\Omega_{1}^{\ast}}}\lambda_{\min}(g-\bar{u}^{\sharp}e)e,\quad h_{\ast}\leq \lambda_{\max}(h_{\ast}) e,
\end{align*}
which, together with the compactness of the region $\overline{\Omega_{1}^{\ast}}$, reads that 
\begin{align}\label{W08}
g-\bar{u}^{\sharp}e\geq5\delta_{\ast}h_{\ast},\quad \delta_{\ast}:=\frac{\inf_{x\in\overline{\Omega_{1}^{\ast}}}\lambda_{\min}(g-\bar{u}^{\sharp}e)}{5\lambda_{\max}(h_{\ast})},	
\end{align}	
where $h_{\ast}$ is defined by \eqref{M01}. Pick
\begin{align*}
\bar{\ell}=\frac{\sigma_{0}\delta_{1}}{2\|g\|_{1}},
\end{align*}	 
where $\sigma_{0},\delta_{1}$ are, respectively, given by Lemma \ref{lem002} and \eqref{M06}. Note that
\begin{align*}
g\ast\varphi_{\bar{\ell}}-(\bar{u}\ast\varphi_{\bar{\ell}})^{\sharp}{e}=&g-\bar{u}^{\sharp}e+g\ast\varphi_{\bar{\ell}}-g+\bar{u}^{\sharp}e-(\bar{u}^{\sharp}e)\ast\varphi_{\bar{\ell}}\notag\\
&+(\bar{u}^{\sharp}e)\ast\varphi_{\bar{\ell}}-(\bar{u}\ast\varphi_{\bar{\ell}})^{\sharp}{e},
\end{align*}	
where $\varphi_{\bar{\ell}}$ represents the standard mollifying kernel on $\mathbb{R}^{n}$. Then we have from Lemma \ref{LEM08} that
\begin{align*}
\begin{cases}
\|g-g\ast\varphi_{\bar{\ell}}\|_{0}\leq \|g\|_{1}\bar{\ell},\quad\|\bar{u}^{\sharp}e-(\bar{u}^{\sharp}e)\ast\varphi_{\bar{\ell}}\|_{0}\leq \bar{c}_{1}\|\bar{u}\|_{1+\varepsilon}\bar{\ell}^{\varepsilon},	\\
\|(\bar{u}^{\sharp}e)\ast\varphi_{\bar{\ell}}-(\bar{u}\ast\varphi_{\bar{\ell}})^{\sharp}{e}\|_{0}\leq \bar{c}_{2}\bar{\ell}^{2\varepsilon}\|\bar{u}\|_{1+\varepsilon}^{2},
\end{cases}
\end{align*}	
where $\bar{c}_{i}=\bar{c}_{i}(n)$, $i=1,2.$ Then if the parameter $a$ is sufficiently large, 
we have
\begin{align*}
\bar{\ell}\leq\min\bigg\{&\frac{\delta_{\ast}\lambda_{\min}(h_{\ast})}{\|g\|_{1}},\bigg(\frac{\delta_{\ast}\lambda_{\min}(h_{\ast})}{\bar{c}_{1}\|\bar{u}\|_{1+\varepsilon}}\bigg)^{\frac{1}{\varepsilon}},\bigg(\frac{\delta_{\ast}\lambda_{\min}(h_{\ast})}{\bar{c}_{2}\|\bar{u}\|^{2}_{1+\varepsilon}}\bigg)^{\frac{1}{2\varepsilon}}\bigg\},	
\end{align*}	
which leads to that
\begin{align*}
g\ast\varphi_{\bar{\ell}}-(\bar{u}\ast\varphi_{\bar{\ell}})^{\sharp}{e}\geq	2\delta_{\ast}h_{\ast}\geq	2\delta_{1}h_{\ast}.
\end{align*}	
Therefore, a consequence of Lemma \ref{lem001}, finite covering theorem, Caratheodory's theorem and partition of unity shows that there exist some constant $N_{\ast}\geq n_{\ast}$, a family of unit vectors $\{\nu_{i}\}_{i=1}^{N_{\ast}}\subset\mathbb{S}^{n}$ and related amplitudes $\{\bar{a}_{i}\}_{i=1}^{N_{\ast}}\subset C^{\infty}(\mathrm{Sym}_{n},\mathbb{R}_{+})$ such that
\begin{align}\label{W16}
g\ast\varphi_{\bar{\ell}}-(\bar{u}\ast\varphi_{\bar{\ell}})^{\sharp}{e}-\delta_{1}h_{\ast}=\sum^{N_{\ast}}_{i=1}\bar{a}_{i}^{2}\nu_{i}\otimes\nu_{i},	
\end{align}	
where $\mathbb{R}_{+}:=\{x\in\mathbb{R}:x>0\}$ and $\bar{a}_{i}$, $i=1,...,N_{\ast}$ satisfy that for $k\geq1$,
\begin{align*}
\|\bar{a}_{i}\|_{k}\leq& C\|g\ast\varphi_{\bar{\ell}}-(\bar{u}\ast\varphi_{\bar{\ell}})^{\sharp}{e}\|_{k}\leq C\big(\|(g-\bar{u}^{\sharp}e)\ast\varphi_{\bar{\ell}}\|_{k}+\|(\bar{u}^{\sharp}e)\ast\varphi_{\bar{\ell}}-(\bar{u}\ast\varphi_{\bar{\ell}})^{\sharp}{e}\|_{k}\big)\notag\\
\leq& C\big(\|g-\bar{u}^{\sharp}e\|_{0}\bar{\ell}^{-k}+\bar{\ell}^{2\varepsilon-k}\|\bar{u}\|_{1+\varepsilon}^{2}\big)\leq \bar{c}_{3}(\|g-\bar{u}^{\sharp}e\|_{0}+\delta_{\ast}\lambda_{\min}(h_{\ast}))\bar{\ell}^{-k},	
\end{align*}
and
\begin{align*}
\|\bar{a}_{i}\|_{0}\leq	\bar{c}_{3}\big(\|g-\bar{u}^{\sharp}e\|_{0}+\delta_{\ast}(\lambda_{\min}(h_{\ast})+\|h_{\ast}\|_{0})\big).
\end{align*}	
Denote
\begin{align*}
\bar{u}_{0}=\bar{u}\ast\varphi_{\bar{\ell}},\quad\bar{\mu}_{0}=\bar{\ell}^{-1},\quad\bar{\mu}_{i}=K\bar{\mu}_{i-1},\;\, i=1,...,N_{\ast},
\end{align*}	
where the constant $K$ is given by \eqref{Z09} below. Then we introduce the desired Nash-spirals as follows: for $1\leq i\leq N_{\ast}$, 
\begin{align*}
	\bar{u}_{i}=\bar{u}_{i-1}+\frac{\bar{a}_{i}}{\bar{\mu}_{i}}	\big(\sin(\bar{\mu}_{i}x\cdot\nu_{i})\bar{\zeta}_{i-1}+\cos(\bar{\mu}_{i}x\cdot\nu_{i})\bar{\eta}_{i-1}\big),
\end{align*}	
where $\bar{\zeta}_{i-1},\bar{\eta}_{i-1}$ are  the normal vectors given by Lemma \ref{lem012} subject to
\begin{align*}
		\nabla \bar{u}_{i-1}^{t}\bar{\zeta}_{i-1}=\nabla \bar{u}_{i-1}^{t}\bar{\eta}_{i-1}=0,\quad \bar{\zeta}_{i-1}\cdot\bar{\eta}_{i-1}=0,\quad|\bar{\zeta}_{i-1}|=|\bar{\eta}_{i-1}|=1.
\end{align*}
By a direct computation, we obtain
\begin{align*}
\|\bar{u}_{i}\|_{k}\leq C(\bar{\mu}_{i}^{k-1}+\|\bar{u}_{i-1}\|_{k+1})\leq C\bar{\mu}_{i}^{k-1},\quad k\geq2,\;1\leq i\leq N_{\ast},
\end{align*}
and
\begin{align}\label{AQ68}
&\|\bar{u}_{N_{\ast}}\|_{1}\leq\|\bar{u}_{0}\|_{1}+\sum^{N_{\ast}}_{i=1}\big(\bar{c}_{4}\|\bar{a}_{i}\|_{0}+\bar{c}_{5}K^{-1}\big)\notag\\
&\leq\|\bar{u}\|_{1}+2N_{\ast} \bar{c}_{3}\bar{c}_{4}\big(\|g-\bar{u}^{\sharp}e\|_{0}+\delta_{\ast}(\lambda_{\min}(h_{\ast})+\|h_{\ast}\|_{0})\big)=:\|\bar{u}\|_{1}+\mathcal{G}_{\ast},
\end{align}	
provided that 
\begin{align}\label{K1}
	K\geq\frac{\bar{c}_{5}}{ \bar{c}_{3}\bar{c}_{4}\big(\|g-\bar{u}^{\sharp}e\|_{0}+\delta_{\ast}(\lambda_{\min}(h_{\ast})+\|h_{\ast}\|_{0})\big)}=:\kappa_{1}.
\end{align}
Moreover, if 
\begin{align}\label{K2}
	K\geq4\sqrt{2}N_{\ast}\bar{c}_{3}\big(\|g-\bar{u}^{\sharp}e\|_{0}+\delta_{\ast}(\lambda_{\min}(h_{\ast})+\|h_{\ast}\|_{0})\big)(\varepsilon\bar{\mu}_{0})^{-1}=:\kappa_{2},
\end{align}	
then we have
\begin{align*}
\|\bar{u}_{N_{\ast}}-\bar{u}\|_{0}\leq&\sum^{N_{\ast}}_{i=1}\|\bar{u}_{i}-\bar{u}_{i-1}\|_{0}+\|\bar{u}_{0}-\bar{u}\|_{0}\leq\sum^{N_{\ast}}_{i=1}\frac{\sqrt{2}\|a_{i}\|_{0}}{\bar{\mu}_{i}}+\|\bar{u}\|_{1}\bar{\ell}\notag\\	
\leq&\sqrt{2}N_{\ast}\bar{c}_{3}\big(\|g-\bar{u}^{\sharp}e\|_{0}+\delta_{\ast}(\lambda_{\min}(h_{\ast})+\|h_{\ast}\|_{0})\big)\bar{\mu}_{1}^{-1}+\frac{\varepsilon}{4}\leq\frac{\varepsilon}{2},
\end{align*}	
and 
\begin{align}\label{AQ69}
	\|\nabla(\bar{u}_{N_{\ast}}-\bar{u})\|_{0}\leq&\sum^{N_{\ast}}_{i=1}\|\nabla(\bar{u}_{i}-\bar{u}_{i-1})\|_{0}+\|\nabla(\bar{u}_{0}-\bar{u})\|_{0}\notag\\
	\leq&\sum^{N_{\ast}}_{i=1}8\sqrt{2}\|a_{i}\|_{0}+\|\bar{u}\|_{1+\varepsilon}\bar{\ell}^{\varepsilon}\notag\\
	\leq&16\sqrt{2}N_{\ast}\bar{c}_{3}\big(\|g-\bar{u}^{\sharp}e\|_{0}+\delta_{\ast}(\lambda_{\min}(h_{\ast})+\|h_{\ast}\|_{0})\big)=:\mathcal{K}_{\ast},	
\end{align}	
where we also used the fact that $a$ is large enough to ensure that
\begin{align*}
	\bar{\ell}\leq\min\bigg\{\frac{\varepsilon}{4\|\bar{u}\|_{1}},\bigg(\frac{8\sqrt{2}N_{0}\bar{c}_{3}\big(\|g-\bar{u}^{\sharp}e\|_{0}+\delta_{\ast}(\lambda_{\min}(h_{\ast})+\|h_{\ast}\|_{0})\big)}{\|\bar{u}\|_{1+\varepsilon}}\bigg)^{\frac{1}{\varepsilon}}\bigg\}.	
\end{align*}	
Hence, based on these above facts and in exactly the same way to Lemma \ref{lem007}, we obtain that for $i=1,...,N_{\ast}$,
\begin{align*}
\bar{u}_{i}^{\sharp}e=\bar{u}_{i-1}^{\sharp}e+\bar{a}_{i}^{2}\nu_{i}\otimes\nu_{i}+O(1)K^{-1},	
\end{align*}	
which, in combination with Lemma \ref{LEM08}, gives that
\begin{align*}
\|g-\bar{u}_{N_{\ast}}^{\sharp}e-\delta_{\ast}h_{\ast}\|_{0}\leq&\|g-g\ast\varphi_{\bar{\ell}}\|_{0}+\|g\ast\varphi_{\bar{\ell}}-\bar{u}_{N_{\ast}}^{\sharp}e-\delta_{1}h_{\ast}\|_{0}\notag\\	
\leq&\|g\|_{1}\bar{\ell}+\bar{c}_{6}K^{-1}\leq\sigma_{0}\delta_{1},
\end{align*}	
provided that 
\begin{align}\label{Z09}
K:=\frac{2\bar{c}_{6}}{\sigma_{0}\delta_{1}}\geq\max_{1\leq i\leq2}\kappa_{i},
\end{align}	
where $\kappa_{i}$, $i=1,2$ are defined by \eqref{K1}--\eqref{K2}. Remark that the inequality in \eqref{Z09} holds, since $a$ is taken sufficiently large. Observe that
\begin{align*}
\|\bar{u}_{N_{\ast}}\|_{2}\leq\bar{c}_{7}\bar{\mu}_{N_{\ast}}=\bar{c}_{7}\bar{\mu}_{0}K^{N_{\ast}}=\frac{2^{N_{\ast}+1}\bar{c}_{6}^{N_{\ast}}\bar{c}_{7}\|g\|_{1}}{(\sigma_{0}\delta_{1})^{N_{\ast}+1}}.
\end{align*}
Define
\begin{align}\label{W09}
\lambda_{\ast}=\frac{2^{N_{\ast}+1}\bar{c}_{6}^{N_{\ast}}\bar{c}_{7}\|g\|_{1}}{\delta_{\ast}^{N_{\ast}+3/2}\sigma_{0}^{N_{\ast}+1}}.	
\end{align}	
Then we deduce
\begin{align*}
	\|\bar{u}_{N_{\ast}}\|_{2}\leq\delta_{\ast}^{1/2}\lambda_{\ast}(\delta_{1}\delta_{\ast}^{-1})^{-N_{\ast}-1}=\delta_{0}^{1/2}\lambda_{0}.
\end{align*}
Then \eqref{AQ65} is proved by letting $u_{0}:=\bar{u}_{N_{\ast}}.$

\section{Further analysis for the even-dimensional case}

As seen in Corollary \ref{coro033}, when $n$ is even, if we directly apply the proof of Lemma \ref{lem007} with $\frac{n}{2}$ times, it will lead to a larger upper bound on the final constructed immersion as follows:
\begin{align*}
\|u_{m,\frac{3n}{2}}\|_{2}\leq C\delta_{m+1}^{1/2}\lambda_{m,n}\Lambda^{\frac{n}{2}}.
\end{align*}
Note that the index on $\Lambda$ reaches $\frac{n}{2}$ in the even-dimensional case, which exceeds the $\frac{n-1}{2}$ found in the odd-dimensional setting. In order to avoid a loss of regularity arising from this larger bound, a natural idea is to reduce the growth rate of the oscillation parameter used in the $\frac{n}{2}$ Nash-spirals from $\Lambda$ to $\sqrt{\Lambda}$. However, this modification brings a new difficulty: certain error terms that were previously of order $O(\delta_{m+1}\Lambda^{-1})$ are now increased to order $O(\delta_{m+1}\Lambda^{-1/2})$, becoming problematic. To tackle this obstacle, one possible approach is to proceed according to the strategy proposed by Kallen in Lemma 2.3 of \cite{K1978}, which employs the implicit function theorem to incorporate these terms as perturbations into the decomposition of symmetric positive definite matrix. For that purpose and future use, we now present a modified version of Proposition 5.4 in \cite{DI2020}, adapted to fit the iterative decomposition as follows. 

\begin{prop}\label{pro003}
Let $n\geq4$ be an even number. Assume that $1\leq\mu_{0}\leq\mu_{1}\leq\cdots\leq\mu_{n_{\ast}}$, $0<\varsigma,\delta<1$,  $0\leq n_{1}\leq n-1$, $n\leq n_{2}\leq\frac{3n}{2}-1$,
\begin{align}\label{ITE01}	
(\mu_{0}\mu_{1}^{-1})^{2}\geq\mu_{n_{2}}\mu_{n_{2}+1}^{-1},\quad\mu_{0}^{k}\geq\max\{\varsigma^{k}\mu_{n_{1}}\mu_{n_{1}+1}^{k-1},\delta^{k}\mu_{n_{2}}\mu_{n_{2}+1}^{k-1}\},\;\, k\geq 1.
\end{align}	
 Then there exists a small constant $0<\hat{\sigma}_{\ast}<\sigma_{\ast}/8$ such that for $k,l=1,...,\frac{n}{2}$, if $h,\mathfrak{T}_{k},\mathfrak{G}_{k},\Theta_{kl}\in C^{\infty}(\overline{\Omega},\mathrm{Sym}_{n})$ satisfy that $\|h\|_{i}\leq\mu_{0}^{i},\,i\geq1$,
	\begin{align*}
 \|h-h_{\ast}\|_{0}+\sum^{\frac{n}{2}}_{k=1}(\|\mathfrak{T}_{k}\|_{0}+\|\mathfrak{G}_{k}\|_{0})+\sum^{\frac{n}{2}}_{k,l=1}\|\Theta_{kl}\|_{0}+\frac{\mu_{0}}{\mu_{1}}+\sqrt{\frac{\mu_{n_{2}}}{\mu_{n_{2}+1}}}\leq2\hat{\sigma}_{\ast},
	\end{align*}
	and 
	\begin{align}\label{AQ30}
\|\mathfrak{T}_{k}\|_{i}\leq \varsigma^{i}\mu_{n_{1}}\mu_{n_{1}+1}^{i-1},\quad\sum^{\frac{n}{2}}_{k=1}\|\mathfrak{G}_{k}\|_{i}+\sum^{\frac{n}{2}}_{k,l=1}\|\Theta_{kl}\|_{i}\leq\delta^{i}\mu_{n_{2}}\mu_{n_{2}+1}^{i-1},\quad  i\geq0,	
	\end{align}				
then for any $ j\geq 0$, there exist $a^{j}=(a_{1}^{j},...,a_{n_{\ast}}^{j})\in C^{\infty}(\overline{\Omega},\mathbb{R}^{n_{\ast}})$ and $\mathcal{E}^{j}\in C^{\infty}(\overline{\Omega},\mathrm{Sym}_{n})$ such that
	\begin{align*}
		h=&\sum^{n_{\ast}}_{i=1}(a_{i}^{j})^{2}\xi_{i}\otimes\xi_{i}+\sum^{n}_{i=1}\mu_{i}^{-2}\nabla a_{i}^{j}\otimes\nabla a_{i}^{j}+\sum^{\frac{n}{2}}_{k=1}\sqrt{(a_{\bar{n}+k}^{j})^{2}-\mathfrak{T}_{k}}\,\mathfrak{G}_{k}\notag\\
		&+\sum^{\frac{n}{2}}_{k,l=1}\sqrt{(a_{\bar{n}+k}^{j})^{2}-\mathfrak{T}_{k}}\sqrt{(a_{\bar{n}+l}^{j})^{2}-\mathfrak{T}_{l}}\,\Theta_{kl}+\mathcal{E}^{j},
	\end{align*}	
with $a_{i}^{j}\geq\sqrt{\sigma_{\ast}}/2,\,1\leq i\leq n_{\ast}$ and
	\begin{align}\label{AQ31}
			\|a^{j}\|_{k}\leq C(n,j)\mu_{0}^{k},\quad\|\mathcal{E}^{j}\|_{k}\leq C(n,j)(\mu_{0}\mu_{1}^{-1})^{2(j+1)}\mu_{0}^{k},\quad k\geq0,		
	\end{align}	
where $\bar{n}:=n(n_{1}-n+1)$, $\sigma_{\ast}$ is given by Lemma \ref{lem001}.	
\end{prop}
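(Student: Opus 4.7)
The plan is to adapt the iterative K\"all\'en-type decomposition underlying Lemma \ref{lem002} (which itself follows Lemma 2.2 of \cite{CHI2025} and Proposition 5.4 of \cite{DI2020}), absorbing the extra square-root terms $\sqrt{(a_{\bar{n}+k}^{j})^{2}-\mathfrak{T}_{k}}\,\mathfrak{G}_{k}$ and $\sqrt{(a_{\bar{n}+k}^{j})^{2}-\mathfrak{T}_{k}}\sqrt{(a_{\bar{n}+l}^{j})^{2}-\mathfrak{T}_{l}}\,\Theta_{kl}$ as controlled perturbations in a Picard-type fixed-point scheme. The construction must simultaneously guarantee that $a_{i}^{j}\geq\sqrt{\sigma_{\ast}}/2$ throughout, so that the radicands satisfy $(a_{\bar{n}+k}^{j})^{2}-\mathfrak{T}_{k}\geq \sigma_{\ast}/4-2\hat{\sigma}_{\ast}>0$ and the square root is a smooth function on the relevant range; this is ensured by the choice $\hat{\sigma}_{\ast}<\sigma_{\ast}/8$.

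For the base case $j=0$, I would apply Lemma \ref{lem001} directly to $h$ to define $(a_{i}^{0})^{2}=L_{i}(h)\geq\sigma_{\ast}$, which is admissible since $\|h-h_{\ast}\|_{0}\leq 2\hat{\sigma}_{\ast}<2\sigma_{\ast}$, and collect every other contribution into $\mathcal{E}^{0}$. The gradient term $\mu_{i}^{-2}\nabla a_{i}^{0}\otimes\nabla a_{i}^{0}$ is of order $(\mu_{0}\mu_{1}^{-1})^{2}$ by $\|a_{i}^{0}\|_{1}\leq C\mu_{0}$, while the square-root perturbations are of order $\mu_{n_{2}}\mu_{n_{2}+1}^{-1}$ by \eqref{AQ30} combined with $\|\mathfrak{G}_{k}\|_{0},\|\Theta_{kl}\|_{0}\leq 2\hat{\sigma}_{\ast}$. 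The first inequality in \eqref{ITE01} guarantees that both contributions are dominated by $(\mu_{0}\mu_{1}^{-1})^{2}$, which is what the $j=0$ case of \eqref{AQ31} requires.

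For the inductive step, I would update the amplitudes by
\begin{align*}
(a_{i}^{j})^{2}=L_{i}\!\bigg(h-\sum_{k=1}^{n}\mu_{k}^{-2}\nabla a_{k}^{j-1}\otimes\nabla a_{k}^{j-1}-\sum_{k=1}^{n/2}\sqrt{(a_{\bar{n}+k}^{j-1})^{2}-\mathfrak{T}_{k}}\,\mathfrak{G}_{k}-\sum_{k,l=1}^{n/2}\sqrt{(a_{\bar{n}+k}^{j-1})^{2}-\mathfrak{T}_{k}}\sqrt{(a_{\bar{n}+l}^{j-1})^{2}-\mathfrak{T}_{l}}\,\Theta_{kl}\bigg),
\end{align*}
defining $\mathcal{E}^{j}$ as the corresponding residual. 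Because the argument of $L_{i}$ differs from $h$ by a quantity bounded in $C^{0}$ by $C\hat{\sigma}_{\ast}$ (keeping $L_{i}$ well-defined and positive), and because $a^{j}-a^{j-1}$ inherits a factor of $(\mu_{0}\mu_{1}^{-1})^{2j}$ from the preceding residual, one gains the advertised geometric factor $(\mu_{0}\mu_{1}^{-1})^{2(j+1)}$ in $\|\mathcal{E}^{j}\|_{0}$ at each iterate. The higher-derivative bounds in \eqref{AQ31} then follow by induction on $k$, combining Leibniz, the Fa\`a di Bruno formula applied to $\sqrt{(a_{\bar{n}+k}^{j-1})^{2}-\mathfrak{T}_{k}}$, and the interpolation estimates in \eqref{AQ30}; the key observation is that by the second inequality in \eqref{ITE01}, both $\varsigma^{i}\mu_{n_{1}}\mu_{n_{1}+1}^{i-1}$ and $\delta^{i}\mu_{n_{2}}\mu_{n_{2}+1}^{i-1}$ are controlled by $\mu_{0}^{i}$, so the perturbations never dominate the principal frequency $\mu_{0}$.

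The main obstacle is propagating the derivative estimates through the nested square-root nonlinearity without blowing up the constants: each derivative of $\sqrt{(a_{\bar{n}+k}^{j})^{2}-\mathfrak{T}_{k}}$ produces, via Fa\`a di Bruno, a sum of products of derivatives of $a_{\bar{n}+k}^{j}$ and $\mathfrak{T}_{k}$ weighted by negative powers of the radicand, so the uniform lower bound $\sigma_{\ast}/4$ and the interpolation bounds on both factors must be tracked carefully to close the induction. A parallel issue arises when verifying that the map sending $a^{j-1}\mapsto a^{j}$ is a contraction in an appropriate weighted norm that keeps $a^{j}\geq\sqrt{\sigma_{\ast}}/2$ at every iterate; this is where the smallness assumption on $\hat{\sigma}_{\ast}$ is used again, together with the Lipschitz dependence of $L_{i}$ on its argument supplied by Lemma \ref{lem001}. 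Once these estimates are under control, assembling the iteration yields the full statement.
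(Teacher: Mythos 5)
Your proof is essentially correct and differs from the paper's in one meaningful way at the base step $j=0$. The paper follows K\"{a}ll\'{e}n's original strategy: it applies the implicit function theorem to a nonlinear map $\Phi_p$ involving the square-root perturbations, so that the resulting $a_i^0$ absorb the terms $\sqrt{(a_{\bar n+k}^0)^2-\mathfrak{T}_k}\,\mathfrak{G}_k$ and $\sqrt{\cdot}\sqrt{\cdot}\,\Theta_{kl}$ \emph{exactly}, leaving $\mathcal{E}^0=-\sum_i\mu_i^{-2}\nabla a_i^0\otimes\nabla a_i^0$ alone. You instead set $(a_i^0)^2=L_i(h)$ directly via Lemma~\ref{lem001} and dump the square-root terms into $\mathcal{E}^0$. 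This is a more elementary route and it does close, but you must verify the resulting bound $\|\mathcal{E}^0\|_k\le C(\mu_0\mu_1^{-1})^2\mu_0^k$ for \emph{all} $k\ge0$, not just $k=0$ as your text addresses: for $k\ge1$ the term $\|\mathfrak{G}_i\|_k\le\delta^k\mu_{n_2}\mu_{n_2+1}^{k-1}$ is a priori only $\le\mu_0^k$, which lacks the factor $(\mu_0\mu_1^{-1})^2$, and one needs the observation (implicit in \eqref{ITE01}, obtained by letting $k\to\infty$ in the second inequality) that $\delta\mu_{n_2+1}\le\mu_0$, which yields $\delta^k\mu_{n_2}\mu_{n_2+1}^{k-1}\le\mu_{n_2}\mu_{n_2+1}^{-1}\mu_0^k\le(\mu_0\mu_1^{-1})^2\mu_0^k$. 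You should spell this out; it is precisely the subtle point the paper circumvents by using the IFT at $j=0$. Once the base case is settled, your inductive step—setting $(a_i^{j+1})^2=L_i(h-f^j)$ and computing $a_i^{j+1}-a_i^j=L_i(\mathcal{E}^j)/(\sqrt{L_i(h-f^j)}+\sqrt{L_i(h-f^j-\mathcal{E}^j)})$—coincides with the paper's Step~2 and the remaining estimates are identical. A final stylistic caution: describing the iteration as a ``Picard-type fixed-point scheme'' and asking for a ``contraction'' is misleading, since the proposition demands a valid decomposition for each fixed $j$ with an explicitly decaying residual, not a limit $a^j\to a^\infty$; it is a finite Newton-type iteration, and no contraction-mapping theorem is invoked.
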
	

\begin{remark}
$\mathfrak{T}_{k}$ and $\mathfrak{G}_{k},\Theta_{kl}$ are introduced to correspond respectively to the remainder term $L_{k}(\mathcal{R})$ in \eqref{AQ908} and the perturbation terms of order $O(\delta_{m+1}\Lambda^{-1/2})$.	From the proof of Proposition \ref{pro003}, we see that the assumed condition in \eqref{ITE01} is critical to the establishment of iterative estimates in \eqref{AQ31}. In particular, the existence of small constants $\varsigma,\delta$ ensures the validity of the assumption in \eqref{ITE01}. However, precisely due to the need to introduce two such small parameters, the proof of Proposition 2.3 in \cite{CS2025} cannot be directly extended to the current setting. 
\end{remark}	

Based on the arguments in Lemma 2.3 of \cite{K1978}, Proposition 5.4 in \cite{DI2020} and Lemma 2.2 in \cite{CHI2025}, we give the proof of Proposition \ref{pro003} as follows.
\begin{proof}
{\bf Step 1.} First, using Lemma \ref{lem001}, we know that if $|h-h_{\ast}|\leq2\sigma_{\ast}$ for some small $0<\sigma_{\ast}<1$, 
	\begin{align*}
		h=\sum^{n_{\ast}}_{i=1}\bar{a}^{2}_{i}\xi_{i}\otimes\xi_{i},\quad  \bar{a}_{i}\in C^{\infty}(\overline{\Omega}),\;\,\bar{a}_{i}\geq\sqrt{\sigma_{\ast}},\quad 1\leq i\leq n_{\ast}.
	\end{align*}
For any $p>2$, denote
\begin{align*}
&S_{p}=C^{p}(\overline{\Omega},(\mathrm{Sym}_{n})^{\frac{n}{2}})\times C^{p}(\overline{\Omega},(\mathrm{Sym}_{n})^{\frac{n^{2}}{4}})\times C^{p}(\overline{\Omega},\mathrm{Sym}_{n}),\\	
&X_{p}=C^{p}(\overline{\Omega},\mathbb{R}^{n_{\ast}}),\quad Y_{p}=C^{p}(\overline{\Omega},(\mathrm{Sym}_{n})^{\frac{n}{2}})\times S_{p},\quad Z_{p}=C^{p}(\overline{\Omega},\mathrm{Sym}_{n}).
\end{align*}	
Clearly, $S_{p},X_{p},Y_{p},Z_{p}$ are all Banach spaces. Introduce the map as follows: for any $p>2$,
	\begin{align*}
		&\Phi_{p}:\,U_{p}\subset X_{p}\times Y_{p}\longrightarrow Z_{p},\\
		&(\{a_{i}\},\{\mathfrak{T}_{i}\},\{\mathfrak{G}_{i}\},\{\Theta_{ij}\},h)	\longmapsto\sum^{n_{\ast}}_{i=1}(a_{i})^{2}\xi_{i}\otimes\xi_{i}+\sum^{\frac{n}{2}}_{k=1}\sqrt{(a_{\bar{n}+k})^{2}-\mathfrak{T}_{k}}\,\mathfrak{G}_{k}-h\\
		&\;\,\quad\quad\quad\quad\quad\quad\quad\quad\quad\quad\quad\quad\quad+\sum^{\frac{n}{2}}_{k,l=1}\sqrt{(a_{2n+k})^{2}-\mathfrak{T}_{k}}\sqrt{(a_{\bar{n}+l})^{2}-\mathfrak{T}_{l}}\,\Theta_{kl},
	\end{align*}
where $U_{p}$ is an open subset of $X_{p}\times Y_{p}$ given by 
\begin{align*}
	U_{p}=\big(X_{p}\cap\{|a_{i}|>\sqrt{\sigma_{\ast}}/2:\,1\leq i\leq n_{\ast}\}\big)\times\big(Y_{p}\cap\{|\mathfrak{T}_{i}|<\sigma_{\ast}/8:\,1\leq i\leq n/2\}\big).	
\end{align*}			
A simple computation gives that $\nabla_{\{a_{i}\}}\Phi_{p}\big|_{(\{a_{i}\},\{\mathfrak{T}_{i}\},\{\mathfrak{G}_{i}\},\{\Theta_{ij}\},h)}(\{a_{i}\})$ is of class $C^{p}$ in $U_{p}$. Moreover, we have
\begin{align*}
		\Phi_{p}(\{\bar{a}_{i}\},0,0,0,h_{\ast})=0,\quad\nabla_{\{a_{i}\}}\Phi_{p}\big|_{(\{\bar{a}_{i}\},0,0,0,h_{\ast})}(\{a_{i}\})=2\sum^{n}_{i=1}\bar{a}_{i}a_{i}\xi_{i}\otimes\xi_{i}.	
\end{align*}	
Then we obtain that $\nabla_{\{a_{i}\}}\Phi_{p}\big|_{(\{\bar{a}_{i}\},0,0,0,h_{\ast})}$ is an isomorphism of $\mathbb{R}^{n_{\ast}}$ onto $\mathrm{Sym}_{n}$ due to the linear independence of $\{\xi_{i}\otimes\xi_{i}\}$. Therefore, applying the implicit function theorem, we obtain that there exists a small constant $0<\hat{\sigma}_{\ast}<\sigma_{\ast}/8$ such that if  
	\begin{align*}
		\|h-h_{\ast}\|_{0}+\sum^{\frac{n}{2}}_{k=1}(\|\mathfrak{T}_{k}\|_{0}+\|\mathfrak{G}_{k}\|_{0})+\sum^{\frac{n}{2}}_{k,l=1}\|\Theta_{kl}\|_{0}<\hat{\sigma}_{\ast},	
	\end{align*}
	then there exist functions $a_{i}^{0}(x)=\Psi_{i}(\{\mathfrak{T}_{k}\},\{\mathfrak{G}_{k}(x)\},\{\Theta_{kl}(x)\},h(x))\in C^{p}(\overline{\Omega})$, satisfying that
\begin{align}\label{AQ01}
h=&\sum^{n_{\ast}}_{i=1}(a_{i}^{0})^{2}\xi_{i}\otimes\xi_{i}+\sum^{\frac{n}{2}}_{k=1}\sqrt{(a_{\bar{n}+k}^{0})^{2}-\mathfrak{T}_{k}}\,\mathfrak{G}_{k}\notag\\
&+\sum^{\frac{n}{2}}_{k,l=1}\sqrt{(a^{0}_{\bar{n}+k})^{2}-\mathfrak{T}_{k}}\sqrt{(a^{0}_{\bar{n}+l})^{2}-\mathfrak{T}_{l}}\,\Theta_{kl},	
\end{align}	
	and
	\begin{align}
		a_{i}^{0}(x)\geq&\bar{a}_{i}(x)-|a_{i}^{0}(x)-\bar{a}_{i}(x)|\notag\\
		\geq&\bar{a}_{i}-|\Psi_{i}(\{\mathfrak{T}_{k}(x)\},\{\mathfrak{G}_{k}(x)\},\{\Theta_{kl}(x)\},h(x))-\Psi_{i}(0,0,0,h_{\ast})|\notag\\
		\geq& \sqrt{\sigma_{\ast}}-C\hat{\sigma}_{\ast}\geq	\sqrt{\sigma_{\ast}}/2,\notag\\
		\|a_{i}^{0}\|_{k}\leq& C\Big(1+\|h\|_{k}+\sum^{\frac{n}{2}}_{i=1}(\|\mathfrak{T}_{i}\|_{k}+\|\mathfrak{G}_{i}\|_{k})+\sum^{\frac{n}{2}}_{i,j=1}\|\Theta_{ij}\|_{k}\Big)\notag\\
		\leq& C\big(\mu_{0}^{k}+\max\{\varsigma^{k}\mu_{n_{1}}\mu_{n_{1}+1}^{k-1},\delta^{k}\mu_{n_{2}}\mu_{n_{2}+1}^{k-1}\}\big)\leq C\mu_{0}^{k},\quad 0\leq k\leq p,\label{A09}
	\end{align}
where we used \eqref{ITE01}--\eqref{AQ30} in the last inequality.	Denote 
	\begin{align*}
		\mathcal{E}^{0}=-\sum^{n}_{i=1}\mu_{i}^{-2}\nabla a_{i}^{0}\otimes\nabla a_{i}^{0}.
	\end{align*}	
	Then we have from \eqref{AQ01} that
	\begin{align*}
		h=&\sum^{n_{\ast}}_{i=1}(a_{i}^{0})^{2}\xi_{i}\otimes\xi_{i}+\sum^{n}_{i=1}\mu_{i}^{-2}\nabla a_{i}^{0}\otimes\nabla a_{i}^{0}+\sum^{\frac{n}{2}}_{k=1}\sqrt{(a_{\bar{n}+k}^{0})^{2}-\mathfrak{T}_{k}}\,\mathfrak{G}_{k}\notag\\
		&+\sum^{\frac{n}{2}}_{k,l=1}\sqrt{(a^{0}_{\bar{n}+k})^{2}-\mathfrak{T}_{k}}\sqrt{(a^{0}_{\bar{n}+l})^{2}-\mathfrak{T}_{l}}\,\Theta_{kl}+\mathcal{E}^{0}.
	\end{align*}	
	Using \eqref{A09}, we obtain that for $0\leq k\leq p-1,$
	\begin{align*}
		\|\mathcal{E}^{0}\|_{k}\leq&C \sum^{n}_{i=1}\mu_{i}^{-2}\|\nabla a_{i}^{0}\|_{k}	\|\nabla a_{i}^{0}\|_{0}\leq C(\mu_{0}\mu_{1}^{-1})^{2}\mu_{0}^{k}.
	\end{align*}	
	
	{\bf Step 2.} We now proceed by mathematical induction. Suppose that Proposition \ref{pro003} holds for any fixed $j\leq p-2$. Note that $a^{j}=(a_{1}^{j},...,a_{n_{\ast}}^{j})\in C^{p-j}(\overline{\Omega},\mathbb{R}^{n_{\ast}})$ and $\mathcal{E}^{j}\in C^{p-j-1}(\overline{\Omega},\mathrm{Sym}_{n})$	at this point. Then we show that it must also hold for $j+1$. For simplicity, denote
	\begin{align*}
		b^{j}_{k}=\sqrt{(a_{\bar{n}+k}^{j})^{2}-\mathfrak{T}_{k}},\quad k=1,...,\frac{n}{2}.	
	\end{align*} 
Then we have from \eqref{AQ30} and \eqref{A09} that for $0\leq i\leq p-j$,
\begin{align*}
\|b_{k}^{j}\|_{i}\leq C\big(\|a_{\bar{n}+k}^{j}\|_{i}+\|\mathfrak{T}_{k}\|_{i}\big)\leq	C(\mu_{0}^{i}+\varsigma^{i}\mu_{n_{1}}\mu_{n_{1}+1}^{i-1})\leq C\mu_{0}^{i}.
\end{align*}
Write
	\begin{align*}
		f^{j}=&\sum^{n}_{i=1}\mu_{i}^{-2}\nabla a_{i}^{j}\otimes\nabla a_{i}^{j}+\sum^{\frac{n}{2}}_{k=1}b^{j}_{k}\mathfrak{G}_{k}+\sum^{\frac{n}{2}}_{k,l=1}b^{j}_{k}b^{j}_{l}\Theta_{kl}.
	\end{align*}	
From \eqref{ITE01}--\eqref{AQ30} and \eqref{A09}, we obtain
\begin{align*}
\|f^{j}\|_{0}\leq&C\sum^{n}_{i=1}\mu_{i}^{-2}\|\nabla a_{i}^{j}\|_{0}^{2}+\sum^{\frac{n}{2}}_{k=1}\|b^{j}_{k}\|_{0}\|\mathfrak{G}_{k}\|_{0}+\sum^{\frac{n}{2}}_{k,l=1}\|b^{j}_{k}\|_{0}\|b^{j}_{l}\|_{0}\|\Theta_{kl}\|_{0}\notag\\
\leq&C(\mu_{0}\mu_{1}^{-1})^{2}+C\mu_{n_{2}}\mu_{n_{2}+1}^{-1}\leq C\hat{\sigma}_{\ast}^{2},
\end{align*}	 
and, for $0\leq k\leq p-j-1$,
	\begin{align*}
		\|f^{j}\|_{k}\leq&\sum^{n}_{i=1}\mu_{i}^{-2}\|\nabla a_{i}^{j}\|_{k}\|\nabla a_{i}^{j}\|_{0}+C\sum^{\frac{n}{2}}_{i,l=1}\big(\|b_{i}^{j}\|_{k}\|b_{l}^{j}\|_{0}+\|b_{i}^{j}\|_{0}\|b_{l}^{j}\|_{k}\big)\|\Theta_{il}\|_{0}\notag\\
		&+C\sum^{\frac{n}{2}}_{i,l=1}\|b_{i}^{j}\|_{0}\|b_{l}^{j}\|_{0}\|\Theta_{il}\|_{k}+C\sum^{\frac{n}{2}}_{i=1}\big(\|b_{i}^{j}\|_{k}\|\mathfrak{G}_{i}\|_{0}+\|b_{i}^{j}\|_{0}\|\mathfrak{G}_{i}\|_{k}\big)\notag\\
		\leq&C\mu_{1}^{-2}\mu_{0}^{k+2}+C\mu_{0}^{k}\mu_{n_{2}}\mu_{n_{2}+1}^{-1}+C\delta^{k}\mu_{n_{2}}\mu_{n_{2}+1}^{k-1}\leq C\mu_{0}^{k}.
	\end{align*}
Observe that for a sufficiently small constant $\hat{\sigma}_{\ast}>0$,
	\begin{align*}
		\|h-f^{j}-h_{\ast}\|_{0}\leq&\|h-h_{\ast}\|_{0}+\|f^{j}\|_{0}\leq C\hat{\sigma}_{\ast}\leq2\sigma_{\ast}.	
	\end{align*}	
Then from Lemma \ref{lem001}, we have
	\begin{align*}
		h-f^{j}=\sum^{n_{\ast}}_{i=1}(a_{i}^{j+1})^{2}\xi_{i}\otimes\xi_{i},\quad a_{i}^{j+1}=\sqrt{L_{i}(h-f^{j})}\geq\sqrt{\sigma_{\ast}}.	
	\end{align*}	
Denote
	\begin{align*}
		\mathcal{E}^{j+1}=&\sum^{n}_{k=1}\mu_{k}^{-2}(\nabla a_{k}^{j}\otimes\nabla a_{k}^{j}-\nabla a_{k}^{j+1}\otimes\nabla a_{k}^{j+1})\notag\\
		&+\sum^{\frac{n}{2}}_{k=1}(b_{k}^{j}-b_{k}^{j+1})\mathfrak{G}_{k}+\sum^{\frac{n}{2}}	_{k,l=1}(b_{k}^{j}b_{l}^{j}-b_{k}^{j+1}b_{l}^{j+1})\Theta_{kl}.
	\end{align*}	
A direct computation gives that for $1\leq i\leq n_{\ast}$,  $0\leq k\leq p-j-1$,
	\begin{align*}
		\|a^{j+1}_{i}\|_{k}=\|\sqrt{L_{i}(h-f^{j})}\|_{k}\leq C(\|h\|_{k}+\|f^{j}\|_{k})\leq C\mu_{0}^{k}.
	\end{align*}	
In light of the fact that 
	\begin{align*}
		a_{i}^{j+1}-a_{i}^{j}=&\sqrt{L_{i}(h-f^{j})}-\sqrt{L_{i}(h-f^{j}-\mathcal{E}^{j})}\notag\\
		=&\frac{L_{i}(\mathcal{E}^{j})}{\sqrt{L_{i}(h-f^{j})}+\sqrt{L_{i}(h-f^{j}-\mathcal{E}^{j})}},	
	\end{align*}	
	we deduce that for $0\leq k\leq p-j-1,$
	\begin{align*}
		\|a_{i}^{j}-a_{i}^{j+1}\|_{k}\leq& C\big((\|h\|_{k}+\|f^{j}\|_{k}+\|\mathcal{E}^{j}\|_{k})\|\mathcal{E}^{j}\|_{0}+\|\mathcal{E}^{j}\|_{k}\big)\leq C(\mu_{0}\mu_{1}^{-1})^{2(j+1)}\mu_{0}^{k}.
	\end{align*}	
Note that for any $c_{1}\geq c_{2}>0,\,q>0$, we have from mean value theorem that $c_{1}^{q}-c_{2}^{q}=q(c_{1}-c_{2})\xi^{q-1}$ for some $c_{2}\leq\xi\leq c_{1}$. Then we have
\begin{align*}
\|b_{i}^{j+1}-b_{i}^{j}\|_{0}=&\Big\|\sqrt{(a_{\bar{n}+i}^{j+1})^{2}-\mathfrak{T}_{i}}-\sqrt{(a_{\bar{n}+i}^{j})^{2}-\mathfrak{T}_{i}}\Big\|_{0}	\notag\\
\leq& C\big((a_{\bar{n}+i}^{j+1})^{2}-(a_{\bar{n}+i}^{j})^{2}\big)\leq C(\mu_{0}\mu_{1}^{-1})^{2(j+1)}.
\end{align*}	
Then applying mathematical induction, we obtain that for $0\leq k\leq p-j-1$,
\begin{align*}
	\|b_{i}^{j}-b_{i}^{j+1}\|_{k}\leq C(\mu_{0}\mu_{1}^{-1})^{2(j+1)}\mu_{0}^{k}.	
\end{align*}	
Combining these above facts, we deduce that for $0\leq k\leq p-j-2,$ 
	\begin{align*}
		\|\mathcal{E}^{j+1}\|_{k}\leq&C\sum^{\frac{n}{2}}_{i=1}\mu_{i}^{-2}\|\nabla a_{i}^{j}\otimes\nabla a_{i}^{j}-\nabla a_{i}^{j+1}\otimes\nabla a_{i}^{j+1}\|_{k}\notag\\ &+C\sum^{\frac{n}{2}}_{i=1}\big(\|b_{i}^{j}-b_{i}^{j+1}\|_{k}\|\mathfrak{G}_{i}\|_{0}+\|b_{i}^{j}-b_{i}^{j+1}\|_{0}\|\mathfrak{G}_{i}\|_{k}\big)\notag\\
		&+C\sum^{\frac{n}{2}}_{i,l=1}\big(\|b_{i}^{j}b_{l}^{j}-b_{i}^{j+1}b_{l}^{j+1}\|_{k}\|\Theta_{il}\|_{0}+\|b_{i}^{j}b_{l}^{j}-b_{i}^{j+1}b_{l}^{j+1}\|_{0}\|\Theta_{il}\|_{k}\big)\notag\\
		\leq& C(\mu_{0}\mu_{1}^{-1})^{2(j+2)}\mu_{0}^{k}+C(\mu_{0}\mu_{1}^{-1})^{2(j+1)}\mu_{0}^{k}\mu_{n_{2}}\mu_{n_{2}+1}^{-1}\leq C(\mu_{0}\mu_{1}^{-1})^{2(j+2)}\mu_{0}^{k},
	\end{align*}	
	where we also used \eqref{ITE01} in the last inequality and the fact that 
	\begin{align*}
		\begin{cases}
			b_{i}^{j}b_{l}^{j}-b_{i}^{j+1}b_{l}^{j+1}=b_{i}^{j}(b_{l}^{j}-b_{l}^{j+1})+b_{l}^{j+1}(b_{i}^{j}-b_{i}^{j+1}),	\\
			\nabla a_{i}^{j}\otimes\nabla a_{i}^{j}-\nabla a_{i}^{j+1}\otimes\nabla a_{i}^{j+1}=\nabla a_{i}^{j}\otimes\nabla (a_{i}^{j}-a_{i}^{j+1})+\nabla (a_{i}^{j}-a_{i}^{j+1})\otimes\nabla a_{i}^{j+1}.
		\end{cases}
	\end{align*}	
Finally, by the arbitrariness of $p>2$, we complete	the proof.

\end{proof}



\noindent{\bf{\large Acknowledgements.}} The author would like to thank Prof. C.X. Miao for his constant encouragement and useful discussions. 



\end{document}